\newcommand{\ep}{\varepsilon}
\newcommand{\R}{\mathbb{R}}
\newtheorem{thm}{Theorem}[section]
\newtheorem{lemma}{Lemma}[section]
\newtheorem{prop}{Proposition}[section]
\newtheorem{cor}{Corollary}[section]
\newtheorem{remark}{Remark}[section]
\author[J. Seok]{Jinmyoung Seok}
\address[Jinmyoung Seok]{Department of Mathematics, Kyonggi University,
154-42 Gwanggyosan-ro, Yeongtong-gu, Suwon 16227, Republic of Korea}
\email{jmseok@kgu.ac.kr}
\title[Limit profiles and uniqueness of the nonlinear Choquard equations]
{Limit profiles and uniqueness of ground states \\ to the nonlinear Choquard equations}
\begin{document}

\maketitle

\begin{abstract}
Consider nonlinear Choquard equations
\begin{equation*}
\left\{\begin{array}{rcl}
-\Delta u +u & = &(I_\alpha*|u|^p)|u|^{p-2}u  \quad \text{in } \R^N, \\
\lim_{x \to \infty}u(x) & = &0,
\end{array}\right.
\end{equation*} 
where $I_\alpha$ denotes Riesz potential and $\alpha \in (0, N)$. 
In this paper, we investigate limit profiles of ground states of nonlinear Choquard equations as $\alpha \to 0$ or $\alpha \to N$. 
This leads to the uniqueness and nondegeneracy of ground states when $\alpha$ is sufficiently close to $0$ or close to $N$.
\end{abstract}
\keywords{Keywords: semilinear elliptic; Choquard; limit profile; uniqueness;} \\
{\bf MSC (2010):} 35J10, 35J20, 35J61

\section{Introduction}
Let $N \geq 3,\, \alpha \in (0, N)$, $p > 1$.
We are concerned with the so-called nonlinear Choquard equation:
\begin{equation}\label{main-eq}
\left\{\begin{array}{rcl}
-\Delta u +u & = &(I_\alpha*|u|^p)|u|^{p-2}u  \quad \text{in } \R^N, \\
\lim_{x \to \infty}u(x) & = &0
\end{array}\right.
\end{equation}
where $I_\alpha$ is Riesz potential given by
\[
I_\alpha(x) = \frac{\Gamma(\frac{N-\alpha}{2})}{\Gamma(\frac{\alpha}{2})\pi^{N/2}2^\alpha|x|^{N-\alpha}}
\]
and $\Gamma$ denotes the Gamma function.
The equation \eqref{main-eq} finds its physical origin especially when $N=3, \alpha = 2$ and $p = 2$. 
In this case, a solution of the equation
\begin{equation}\label{main-eq-physical}
-\Delta u +u  = (I_2*|u|^2)u
\end{equation}
gives a solitary wave of the Schr\"odinger type  nonlinear  evolution equation 
\[
i\partial_t\psi +\Delta\psi + (I_2*|\psi|^2)\psi = 0,
\] 
which describes, through Hartree-Fock approximation, a dynamics of condensed states to a system of nonrelativistic bosonic particles with two-body attractive interaction potential $I_2$ that is Newtonian potential \cite{FL, LNR}.
The equation \eqref{main-eq-physical} also arises as a model of a polaron by Pekar \cite{P} or in an approximation of Hartree-Fock theory for a one-component plasma \cite{L}.

The equation \eqref{main-eq} enjoys a variational structure. It is the Euler-Lagrange equation of the functional
\[
J_\alpha(u) = \frac12\int_{\R^N}|\nabla u|^2+u^2 \,dx -\frac{1}{2p}\int_{\R^N}(I_\alpha*|u|^p)|u|^p\,dx.
\]
From Hardy-Littlewood-Sobolev inequality (Proposition \ref{HLS} below), one can see that $J_\alpha$ is well defined and is continuously differentiable
on $H^1(\R^N)$ if $p \in [1+\frac{\alpha}{N},\, \frac{N+\alpha}{N-2}]$.
We say a function $u \in H^1(\R^N)$ is a ground state solution to \eqref{main-eq} if $J_\alpha'(u) = 0$ and
\[
J_\alpha(u) = \inf\left\{ J_\alpha(v)  ~|~ v \in H^1(\R^N),\, J_\alpha'(v) = 0,\, v \not\equiv 0 \right\}.
\]
When $N = 3,\alpha =2$ and $p = 2$, the existence of a radial positive solution is proved in \cite{L, Li, M} by variational methods and in \cite{CSV, MPT, TM} by ODE approaches. 
In \cite{MV2}, Moroz and Van Schaftingen prove the existence of a ground state solution to \eqref{main-eq} in the range of $p \in (1+\frac{\alpha}{N},\, \frac{N+\alpha}{N-2})$
and the nonexistence of a nontrivial finite energy solution of \eqref{main-eq} for $p$ outside of the above range. 
For qualitative properties of ground states to \eqref{main-eq}, we refer to \cite{MZ, MV2}.

In this paper, we are interested in limit behaviors of ground state to \eqref{main-eq} as either $\alpha \to 0$ or $\alpha \to N$.
These shall play essential roles to prove the uniqueness and nondegeneracy of a positive radial ground state to \eqref{main-eq} for $\alpha$ sufficiently close to $0$ or $N$. 
From the existence results by Moroz and Van Schaftingen, we can see that a positive radial ground state of \eqref{main-eq} exists for every $\alpha \in (0,\, N(p-1))$ when $p \in (1,\, N/(N-2))$ is fixed. 
Also for given $p \in (2,\, 2N/(N-2))$, a positive radial ground state of \eqref{main-eq} exists for every $\alpha \in ((N-2)p-N,\, N)$.

As $\alpha \to 0$, one is possible to see that the functional $J_\alpha$ formally approaches to 
\[
J_0(u) :=  \frac12\int_{\R^N}|\nabla u|^2+u^2 \,dx -\frac{1}{2p}\int_{\R^N}|u|^{2p}\,dx \quad \text{on } H^1(\R^N)
\]
because $I_\alpha * f$ approaches to $f$ as $\alpha\to 0$.
It is well known that the Euler-Lagrange equation (equation \eqref{limit-eq0} below) of $J_0$ admits a unique positive radial ground state solution. 
Thus it is reasonable to expect that the ground state of \eqref{limit-eq0} is the limit profile of ground states of \eqref{main-eq} as $\alpha\to0$. 
Our first result is to confirm this. 
\begin{thm}\label{profile0}
Fix $p \in (1,\, N/(N-2))$. Let $\{u_\alpha\}$ be a family of positive radial ground states to \eqref{main-eq} for $\alpha$ close to $0$ and
$u_0$ be a unique positive radial ground state of the equation
\begin{equation}\label{limit-eq0}
\left\{\begin{array}{rcl}
-\Delta u +u & = &|u|^{2p-2}u \quad \text{in } \R^N, \\
\lim_{x \to \infty}u(x) & = &0.
\end{array}\right.
\end{equation}
Then one has
\[
\lim_{\alpha\to0}\|u_\alpha-u_0\|_{H^1(\R^N)} = 0.
\]
\end{thm}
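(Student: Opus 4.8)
The plan is to combine the Nehari-manifold characterization of ground states with the radial compactness of $H^1(\R^N)$ and the approximate-identity behaviour of $I_\alpha$ as $\alpha\to0$. Write $\|\cdot\|$ for the $H^1(\R^N)$-norm and $\mathcal N_\alpha=\{u\neq0:\langle J_\alpha'(u),u\rangle=0\}$ for the Nehari manifold; on $\mathcal N_\alpha$ one has $\int_{\R^N}(I_\alpha*|u|^p)|u|^p\,dx=\|u\|^2$ and hence $J_\alpha(u)=\tfrac{p-1}{2p}\|u\|^2$, so that the ground state level $c_\alpha:=J_\alpha(u_\alpha)=\inf_{\mathcal N_\alpha}J_\alpha$ directly controls $\|u_\alpha\|$. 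I denote by $c_0:=J_0(u_0)=\tfrac{p-1}{2p}\|u_0\|^2$ the corresponding level for the limit functional $J_0$.

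First I would prove $c_\alpha\to c_0$. For the upper bound I project the fixed profile $u_0$ onto $\mathcal N_\alpha$, obtaining a unique $t_\alpha>0$ with $t_\alpha u_0\in\mathcal N_\alpha$, so that $c_\alpha\le J_\alpha(t_\alpha u_0)=\max_{t>0}J_\alpha(tu_0)$. Because $\int_{\R^N}(I_\alpha*|u_0|^p)|u_0|^p\,dx\to\int_{\R^N}|u_0|^{2p}\,dx$ as $\alpha\to0$ --- a fact one reads off from Plancherel's identity, since $\widehat{I_\alpha}(\xi)=|\xi|^{-\alpha}\to1$ --- one checks $t_\alpha\to1$ and therefore $\limsup_{\alpha\to0}c_\alpha\le J_0(u_0)=c_0$. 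In particular $\{u_\alpha\}$ is bounded in $H^1(\R^N)$. A matching lower bound follows from the Nehari identity together with the Hardy--Littlewood--Sobolev and Sobolev inequalities: $\|u_\alpha\|^2=\int_{\R^N}(I_\alpha*|u_\alpha|^p)|u_\alpha|^p\,dx\le C\|u_\alpha\|^{2p}$ forces $\|u_\alpha\|\ge\delta>0$, whence $\liminf_{\alpha\to0}c_\alpha>0$; here it is essential that the HLS constant be taken uniform in $\alpha$ near $0$, which holds for the chosen normalization of $I_\alpha$.

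Next, boundedness and radial symmetry give a subsequence with $u_\alpha\rightharpoonup u_*$ weakly in $H^1(\R^N)$ and, by the compact embedding $H^1_{\mathrm{rad}}(\R^N)\hookrightarrow L^q(\R^N)$ for $q\in(2,2N/(N-2))$, strongly in every such $L^q$; note that $2p$ lies in this range precisely because $p\in(1,N/(N-2))$. The crux is to pass to the limit in $J_\alpha'(u_\alpha)=0$, that is, to show $\int_{\R^N}(I_\alpha*|u_\alpha|^p)|u_\alpha|^{p-2}u_\alpha\varphi\,dx\to\int_{\R^N}|u_*|^{2p-2}u_*\varphi\,dx$ for every $\varphi\in C_c^\infty(\R^N)$. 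I would split this difference into a term measuring the $\alpha$-dependence of the kernel, handled by the approximate-identity property on the fixed limit, and a term measuring the convergence $u_\alpha\to u_*$, handled by uniform HLS estimates once one knows $|u_\alpha|^p\to|u_*|^p$ in the relevant $L^a$ space with $pa\to2p$. Combined with weak convergence of the linear part, this identifies $u_*$ as a solution of \eqref{limit-eq0}.

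The same limiting analysis applied to the energy gives $\int_{\R^N}(I_\alpha*|u_\alpha|^p)|u_\alpha|^p\,dx\to\int_{\R^N}|u_*|^{2p}\,dx$; since the left side equals $\|u_\alpha\|^2=\tfrac{2p}{p-1}c_\alpha\to\tfrac{2p}{p-1}c_0>0$, the limit $u_*$ is nontrivial, and because it solves \eqref{limit-eq0} it satisfies $\|u_*\|^2=\int_{\R^N}|u_*|^{2p}\,dx=\tfrac{2p}{p-1}c_0$, so $u_*\in\mathcal N_0$ with $J_0(u_*)=c_0$. Thus $u_*$ is a radial ground state of \eqref{limit-eq0}, whence $u_*=u_0$ by the assumed uniqueness. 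Strong convergence is then automatic in the Hilbert space $H^1(\R^N)$: the identity $\|u_\alpha\|^2=\tfrac{2p}{p-1}c_\alpha\to\tfrac{2p}{p-1}c_0=\|u_0\|^2$ together with $u_\alpha\rightharpoonup u_0$ yields $u_\alpha\to u_0$ in $H^1(\R^N)$, and since the limit is independent of the subsequence the full family converges. The main obstacle I anticipate is the uniform-in-$\alpha$ control of the nonlocal nonlinearity: one must simultaneously exploit that $I_\alpha$ converges to the identity and that the HLS bilinear bound for $(f,g)\mapsto\int_{\R^N}(I_\alpha*f)g\,dx$ stays bounded as its dual exponents degenerate toward $\tfrac1a+\tfrac1b\to1$, all while tracking the convergence $|u_\alpha|^p\to|u_*|^p$ in the moving space $L^{2N/(N+\alpha)}(\R^N)$; by comparison the compactness and uniqueness inputs are routine.
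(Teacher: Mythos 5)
Your proposal follows essentially the same route as the paper's proof: energy-level convergence, uniform $H^1$ boundedness via the Nehari identity, radial compactness together with $\alpha$-uniform Hardy--Littlewood--Sobolev bounds to pass to the limit in the equation and in the norm identity, a uniform lower bound on $\|u_\alpha\|_{H^1(\R^N)}$ to rule out vanishing, identification of the limit with $u_0$ by uniqueness, and a subsequence argument to upgrade to the full family (the paper outsources the energy convergence to \cite{RV} and the nonlocal-term convergence to \cite{S}, where you instead sketch a direct Nehari projection and a Plancherel argument, which are fine). The one loose point is that you assert $c_\alpha\to c_0$ after establishing only $\limsup_{\alpha\to0}c_\alpha\le c_0$ and $\liminf_{\alpha\to0}c_\alpha>0$, but this is harmless and self-repairing: once $u_*$ is known to be a nontrivial critical point of $J_0$ one has $J_0(u_*)\ge c_0$ by definition of the ground-state level, while $J_0(u_*)=\tfrac{p-1}{2p}\lim_j\|u_{\alpha_j}\|_{H^1(\R^N)}^2=\lim_j c_{\alpha_j}\le c_0$, which closes the identification exactly as in the paper's Lemma \ref{energy}.
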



On the other hand, the functional $J_\alpha$ blows up when $\alpha \to N$ due to the term $\Gamma((N-\alpha)/2)$ in the coefficient of $I_\alpha$. 
Thus we need to get rid of this by taking a scaling $v = s(N,\alpha,p)u$ where
\[
s(N,\alpha, p) := \left(\frac{\Gamma(\frac{N-\alpha}{2})}{\Gamma(\frac{\alpha}{2})\pi^{N/2}2^\alpha}\right)^{\frac{1}{2p-2}}
\sim \quad \left(\frac{1}{N-\alpha}\right)^{\frac{1}{2p-2}} \quad \text{as } \alpha \to N.
\]
With this scaling, $J_\alpha$ transforms into the following functional which we still denote by $J_\alpha$ for simplicity,
\[
J_\alpha(v) = \frac12\int_{\R^N}|\nabla v|^2+v^2 \,dx -\frac{1}{2p}\int_{\R^N}\left(\frac{1}{|\cdot|^{N-\alpha}}*|v|^p\right)|v|^p\,dx.
\]

Then as $\alpha \to N$, $J_\alpha$ approaches to 
\[
J_N(v) = \frac12\int_{\R^N}|\nabla v|^2+v^2 \,dx -\frac{1}{2p}\left(\int_{\R^N}|v|^p\,dx\right)^2.
\]
It is easy to see that for $p \in (2,\, 2N/(N-2))$, the limit functional $J_N$ is $C^1$ on $H^1(\R^N)$  and its Euler-Lagrange equation is
\begin{equation}\label{limit-eqN}
\left\{\begin{array}{rcl}
-\Delta v +v & = &\left(\int_{\R^N}|v|^p\,dx\right)|v|^{p-2}v \quad \text{in } \R^N, \\
\lim_{x \to \infty}v(x) & = &0.
\end{array}\right.
\end{equation}
{
The existence and properties of a ground state to \eqref{limit-eqN} is studied in \cite{RV}.
More precisely, it is shown in \cite{RV} that there exists a positive radial ground state $v_0$ of the equation \eqref{limit-eqN}.
Furthermore the following properties for ground states to \eqref{limit-eqN} are proved:
\begin{enumerate}[$(i)$]
\item the ground state energy level of \eqref{limit-eqN} satisfies the mountain pass characterization, i.e,
\[
J_N(v_0) = \min_{v \in H^1(\R^N) \setminus \{0\}}\max_{t\geq 0}J_N(tv);
\]
\item any ground state of \eqref{limit-eqN} is sign-definite, radially symmetric up to a translation and strictly decreasing in radial direction.
\item any ground state of \eqref{limit-eqN} decays exponentially as $|x| \to \infty$.
\end{enumerate}
}

Our next result establishes uniqueness and linearized nondegeneracy of the ground state $v_0$ of \eqref{limit-eqN}.
\begin{thm}\label{prop-limit-eqN}
For $p \in (2,\, 2N/(N-2))$, let  $v_0$ be a positive radial ground state to \eqref{limit-eqN}.
Then we have the following: 
\begin{enumerate}[$(i)$]
\item there is no any other positive radial ground state to \eqref{limit-eqN};
\item the linearized equation of \eqref{limit-eqN} at $v_0$, given by
\begin{equation}\label{linearized-limit-eqN}
-\Delta \phi +\phi -p\left(\int_{\R^N}v_0^{p-1}\phi\,dx\right)v_0^{p-1}-(p-1)\left(\int_{\R^N}v_0^p\,dx\right)v_0^{p-2}\phi  = 0 \quad \text{in } \R^N,
\end{equation}
only admits { solutions of the form
\[
\phi = \sum_{i=1}^N c_i\partial_{x_i}v_0, \qquad c_i \in \R
\]
in the space $L^2(\R^N)$.}  
\end{enumerate}
\end{thm}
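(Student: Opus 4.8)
The plan is to reduce both statements to the corresponding classical facts about the scalar field equation $-\Delta U + U = U^{p-1}$, exploiting that the nonlocal coefficient $\int_{\R^N}|v|^p\,dx$ is \emph{constant} along any fixed solution. Indeed, if $v$ is a positive radial ground state of \eqref{limit-eqN} and I set $\lambda := \int_{\R^N}v^p\,dx > 0$, then $v$ solves $-\Delta v + v = \lambda v^{p-1}$. Writing $v = \mu w$ with $\mu := \lambda^{-1/(p-2)}$ (so that $\lambda\mu^{p-2}=1$) converts this into $-\Delta w + w = w^{p-1}$, whose exponent $p-1$ lies in the subcritical range $(1,\, (N+2)/(N-2))$ precisely because $p \in (2,\, 2N/(N-2))$. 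Since ground states decay exponentially (property $(iii)$ recalled above), $w$ is a positive, radial, decaying solution.

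For part $(i)$ I would invoke Kwong's uniqueness theorem: the equation $-\Delta w + w = w^{p-1}$ admits a unique positive radial decaying solution $U$. Hence $w = U$ and $v = \mu U$. The only remaining freedom is $\lambda$, which is pinned down by the consistency relation $\lambda = \int_{\R^N}v^p\,dx = \mu^p\int_{\R^N}U^p\,dx = \lambda^{-p/(p-2)}\int_{\R^N}U^p\,dx$, that is $\lambda^{(2p-2)/(p-2)} = \int_{\R^N}U^p\,dx$, which determines $\lambda$, hence $\mu$, hence $v$, uniquely. Therefore $v=v_0$ and no other positive radial ground state exists.

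For part $(ii)$ I would carry the same scaling $v_0 = \mu U$ into the linearized operator. Because $\lambda\mu^{p-2}=1$, the local zeroth-order coefficient $(p-1)(\int_{\R^N}v_0^p\,dx)v_0^{p-2}$ collapses to exactly $(p-1)U^{p-2}$, so \eqref{linearized-limit-eqN} reads $L_0\phi = c\,U^{p-1}$, where $L_0 := -\Delta + 1 - (p-1)U^{p-2}$ is the classical linearized operator and $c := p\mu^{2(p-1)}\int_{\R^N}U^{p-1}\phi\,dx$ is a constant. The key algebraic identity is $L_0U = -(p-2)U^{p-1}$, obtained by applying $L_0$ to $U$ and using $-\Delta U + U = U^{p-1}$; thus $U^{p-1} = -(p-2)^{-1}L_0U$, whence $L_0\bigl(\phi + \tfrac{c}{p-2}U\bigr) = 0$. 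By the classical nondegeneracy of $L_0$, its $L^2(\R^N)$-kernel is exactly $\mathrm{span}\{\partial_{x_i}U\}_{i=1}^N$, so $\phi = -\tfrac{c}{p-2}U + \sum_{i=1}^N a_i\partial_{x_i}U$. Substituting this back, using $\int_{\R^N}U^{p-1}\partial_{x_i}U\,dx = \tfrac1p\int_{\R^N}\partial_{x_i}(U^p)\,dx = 0$ and $\int_{\R^N}U^p\,dx > 0$, gives the scalar identity $c\bigl(1 + \tfrac{p\mu^{2(p-1)}}{p-2}\int_{\R^N}U^p\,dx\bigr) = 0$; since $p>2$ the bracket is strictly positive, forcing $c=0$. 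Hence $\phi = \sum_i a_i\partial_{x_i}U = \sum_i c_i\partial_{x_i}v_0$ after rescaling, which is the claim.

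The substantive inputs are the deep classical results of Kwong (uniqueness of $U$) and the nondegeneracy of $L_0$; the genuine work here is organizing the scaling so that the \emph{local} part of \eqref{linearized-limit-eqN} lands exactly on $L_0$, and then disposing of the extra nonlocal rank-one term. This is the main obstacle, and it is precisely where the identity $L_0U = -(p-2)U^{p-1}$ is essential, since it lets me absorb the inhomogeneity $cU^{p-1}$ into $\ker L_0$ and collapse nondegeneracy to a single scalar compatibility condition. I would also verify the minor technical points that any $L^2(\R^N)$ solution $\phi$ is, by elliptic regularity and the exponential decay of $v_0$, smooth and decaying (so that $\int_{\R^N}v_0^{p-1}\phi\,dx$ and $\int_{\R^N}v_0^p\,dx$ converge and the $L^2$- and $H^1$-kernels of $L_0$ coincide), and that the subcritical range of $p-1$ is exactly what makes both Kwong's theorem and the nondegeneracy of $L_0$ applicable.
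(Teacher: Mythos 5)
Your proposal is correct and follows essentially the same route as the paper: part $(i)$ reduces to Kwong's uniqueness theorem via the scaling that normalizes the constant $\int_{\R^N}v^p\,dx$, and part $(ii)$ absorbs the rank-one nonlocal term using the identity $L_0U=(2-p)U^{p-1}$ (the paper's $L(v_0)=(2-p)a_0v_0^{p-1}$) together with the classical nondegeneracy of the local linearization, ending with the same scalar compatibility condition that forces the coefficient to vanish. The only differences are cosmetic: you normalize to the canonical solution $U$ rather than working with $v_0$ and $a_0$ directly, and you avoid the paper's preliminary reduction to $\phi\perp\partial_{x_i}v_0$ by computing the translation components explicitly at the end.
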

Using uniqueness of $v_0$, we can obtain an analogous result to Theorem \ref{profile0}.
\begin{thm}\label{profileN}
Fix $p \in (2,\, 2N/(N-2))$. Let $\{u_\alpha\}$ be a family of positive radial ground states to \eqref{main-eq} for $\alpha$ close to $N$ and
$v_0 \in H^1(\R^N)$ be a unique positive radial ground state of \eqref{limit-eqN}.
Then one has
\[
\lim_{\alpha \to N}\|v_\alpha-v_0\|_{H^1(\R^N)} = 0
\]
where $v_\alpha$ is a family of rescaled functions given by $v_\alpha := s(N,\alpha, p) u_\alpha$.
\end{thm}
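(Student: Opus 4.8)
\emph{Proposal.} The plan is to run a variational convergence argument based on the mountain-pass/Nehari characterization of the ground state levels, parallel to the scheme behind Theorem~\ref{profile0}, and to close it using the uniqueness statement of Theorem~\ref{prop-limit-eqN}$(i)$. Write $D_\alpha(v) = \int_{\R^N}\bigl(\frac{1}{|\cdot|^{N-\alpha}}*|v|^p\bigr)|v|^p\,dx$, $c_\alpha := J_\alpha(v_\alpha)$ and $c_N := J_N(v_0)$. Since $v_\alpha$ is a ground state, testing $J_\alpha'(v_\alpha) = 0$ against $v_\alpha$ gives the Nehari identity $\|v_\alpha\|_{H^1}^2 = D_\alpha(v_\alpha)$, whence
\[
c_\alpha = J_\alpha(v_\alpha) = \frac{p-1}{2p}\,\|v_\alpha\|_{H^1}^2,
\]
and the same computation for $J_N$ yields $\|v_0\|_{H^1}^2 = \bigl(\int_{\R^N}v_0^p\bigr)^2$ and $c_N = \frac{p-1}{2p}\|v_0\|_{H^1}^2$. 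Thus controlling $\|v_\alpha\|_{H^1}$, proving $c_\alpha\to c_N$, and identifying the weak limit are the only things needed: strong $H^1$ convergence will then follow from weak convergence together with convergence of norms in the Hilbert space $H^1(\R^N)$.

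First I would establish the analytic input $D_\alpha(v)\to\bigl(\int_{\R^N}|v|^p\bigr)^2$ as $\alpha\to N$ for fixed nice $v$, since $|x|^{-(N-\alpha)}\to1$ pointwise and the convolution is handled by splitting into $\{|x-y|\le1\}$ and $\{|x-y|>1\}$; the weakly singular near-diagonal part is dominated uniformly by Proposition~\ref{HLS} (with exponent $2Np/(N+\alpha)<2N/(N-2)$, which holds for $\alpha$ close to $N$ precisely because $p<2N/(N-2)$, and with a Hardy--Littlewood--Sobolev constant that stays bounded as $\alpha\to N$), while the far part converges by dominated convergence. For the upper bound on the levels I would use the mountain-pass characterization of $c_\alpha$ (Moroz--Van Schaftingen) and test along the ray $t\mapsto tv_0$, where everything is explicit:
\[
c_\alpha \le \max_{t\ge0}J_\alpha(tv_0) = \frac{p-1}{2p}\,\bigl(\|v_0\|_{H^1}^2\bigr)^{\frac{p}{p-1}}D_\alpha(v_0)^{-\frac{1}{p-1}} \longrightarrow \frac{p-1}{2p}\,\|v_0\|_{H^1}^2 = c_N,
\]
using $D_\alpha(v_0)\to(\int v_0^p)^2 = \|v_0\|_{H^1}^2$. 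In particular $\limsup_{\alpha\to N}c_\alpha\le c_N$, which bounds $\|v_\alpha\|_{H^1}$ uniformly.

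For the matching lower bound I would pass to a subsequence with $v_\alpha\rightharpoonup v_*$ in $H^1(\R^N)$; since every $v_\alpha$ is radial and $p\in(2,2N/(N-2))$, the compact radial embedding $H^1_{\mathrm{rad}}\hookrightarrow L^p$ gives $v_\alpha\to v_*$ strongly in $L^p$. Combining this strong $L^p$ convergence with the kernel analysis above lets me pass to the limit in both $D_\alpha(v_\alpha)$ and in the weak formulation of the equation, so that $v_*$ solves \eqref{limit-eqN}; the Nehari identity then gives $\|v_\alpha\|_{H^1}^2 = D_\alpha(v_\alpha)\le C\|v_\alpha\|_{L^{2Np/(N+\alpha)}}^{2p}\le C'\|v_\alpha\|_{H^1}^{2p}$ with $p>1$, yielding a uniform lower bound $\|v_\alpha\|_{H^1}\ge\delta>0$ and hence, by strong $L^p$ convergence, $v_*\not\equiv0$. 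Being a nontrivial solution, $v_*$ satisfies $J_N(v_*) = \frac{p-1}{2p}\|v_*\|_{H^1}^2\ge c_N$; together with weak lower semicontinuity of the norm and the upper bound this squeezes $\frac{p-1}{2p}\|v_*\|_{H^1}^2 = c_N$, so $v_*$ is a ground state and $c_\alpha\to c_N$, $\|v_\alpha\|_{H^1}\to\|v_0\|_{H^1}$. Since $v_*\ge0$ is radial and nontrivial, the strong maximum principle gives $v_*>0$, and Theorem~\ref{prop-limit-eqN}$(i)$ forces $v_* = v_0$. Finally weak convergence plus convergence of norms gives $v_\alpha\to v_0$ strongly in $H^1(\R^N)$, and as the limit is independent of the subsequence the whole family converges.

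The main obstacle is the uniform control of the nonlocal term as $\alpha\to N$: one must pass to the limit in $D_\alpha(v_\alpha)$ where both the kernel $|x|^{-(N-\alpha)}$ and the density $|v_\alpha|^p$ vary simultaneously, and must do so while keeping the Hardy--Littlewood--Sobolev constant and the relevant Lebesgue exponent $2Np/(N+\alpha)$ uniformly admissible as $\alpha\uparrow N$ (this is where the restriction $p<2N/(N-2)$ enters quantitatively). The subtlety is that the limiting interaction is local, so the diagonal concentration of the kernel must be shown not to lose mass; the splitting into near- and far-diagonal regions, with Proposition~\ref{HLS} on the former and dominated convergence on the latter, is designed precisely to handle this. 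Everything else --- the radial compactness ruling out vanishing, and the upgrade from weak to strong convergence --- is then routine.
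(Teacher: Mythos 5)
Your proposal is correct and follows essentially the same route as the paper: energy-level convergence, uniform $H^1$ bounds via the Nehari identity, weak compactness plus the compact radial embedding, identification of the weak limit as a nontrivial positive solution of \eqref{limit-eqN} through uniform Hardy--Littlewood--Sobolev estimates, uniqueness from Theorem \ref{prop-limit-eqN}$(i)$, and the upgrade to strong convergence from norm convergence. The only difference is cosmetic: you derive $E_\alpha \to E_N$ yourself (upper bound by testing the mountain-pass level along $t\mapsto tv_0$, lower bound by weak lower semicontinuity), whereas the paper cites this convergence from Claim 1 of Proposition 5.1 in \cite{RV}.
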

\begin{remark}
By applying the standard comparison principle, 
it is also possible to see that there exist constants $C, c > 0$ which is independent of $\alpha$ close to $N$ such that
\[
u_{\alpha}(x) \leq C(N-\alpha)^{\frac{1}{2p-2}}e^{-c|x|},
\]
which shows the vanishing profiles of $u_\alpha$.
\end{remark}

The limit profiles of ground states to \eqref{main-eq} lead to the uniqueness and nondegeneracy of them for $\alpha$ either close to $0$ or close to $N$. 
When $N=3,\, \alpha = 2$ and $p = 2$, these are proved by Lenzmann \cite{Lenzmann} and Wei-Winter \cite{WW}. 
Xiang \cite{Xiang} extends this result to the case that $N=3,\, \alpha = 2$ and $p > 2$ close to $2$ by using perturbation arguments. 

We say a positive radial ground state $u_\alpha$ of \eqref{main-eq} is nondegenerate if the linearized equation of \eqref{main-eq} at $u_\alpha$, given by
\[
-\Delta \phi +\phi -p(I_\alpha * (u_\alpha^{p-1}\phi))u_\alpha^{p-1}-(p-1)(I_\alpha * u_\alpha^p)u_\alpha^{p-2}\phi  = 0 \quad \text{in } \R^N,
\]
only admits solutions of the form
\[
\phi = \sum_{i=1}^N c_i\partial_{x_i}u_\alpha, \qquad c_i \in \R
\]
in the space $L^2(\R^N)$.  
We should assume $p \geq 2$ for the well-definedness of the linearized equation. 
\begin{thm}[Uniqueness and nondegeneracy]\label{thm1}
Fix $p \in [2,\, \frac{N}{N-2})$. Then a positive radial ground state of \eqref{main-eq} is unique and nondegenerate for $\alpha$ sufficiently close to $0$.
Fix $p \in (2,\, \frac{2N}{N-2})$. Then the same conclusion holds true for $\alpha$ sufficiently close to $N$.
\end{thm}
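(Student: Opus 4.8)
The plan is to prove both statements by a perturbation argument based at the two singular endpoints, combining the strong $H^1$ convergence of ground states from Theorems~\ref{profile0} and~\ref{profileN} with the nondegeneracy of the corresponding limit profiles. For $\alpha\to N$ the required nondegeneracy of $v_0$ is precisely part~$(ii)$ of Theorem~\ref{prop-limit-eqN}. For $\alpha\to0$, the limit equation \eqref{limit-eq0} is the classical scalar field equation $-\Delta u+u=u^{2p-1}$, whose exponent $2p-1<\frac{N+2}{N-2}$ is Sobolev-subcritical; its positive radial ground state $u_0$ is therefore unique and nondegenerate by the classical theorem of Kwong, meaning that the $L^2(\R^N)$ kernel of $L_0:=-\Delta+1-(2p-1)u_0^{2p-2}$ is spanned exactly by $\{\partial_{x_i}u_0\}_{i=1}^N$. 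I describe the argument for $\alpha\to0$; the case $\alpha\to N$ is entirely parallel after the rescaling $v_\alpha=s(N,\alpha,p)u_\alpha$, with $u_0$ and $L_0$ replaced by $v_0$ and the operator in \eqref{linearized-limit-eqN}.

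I would first establish nondegeneracy for $\alpha$ close to $0$ by contradiction. Assume there exist $\alpha_n\to0$ and functions $\phi_n$ solving the linearized equation at $u_{\alpha_n}$, normalized by $\|\phi_n\|_{L^2}=1$ and orthogonal to each $\partial_{x_i}u_{\alpha_n}$. Testing the equation against $\phi_n$ and using the Hardy--Littlewood--Sobolev inequality to bound the nonlocal quadratic forms yields a uniform $H^1$ bound, so along a subsequence $\phi_n\rightharpoonup\phi$ in $H^1(\R^N)$. Since the linearized potentials decay uniformly in $n$ (as $u_{\alpha_n}\to u_0$ strongly and the $u_{\alpha_n}$ enjoy uniform exponential decay, as in the Remark above), an Agmon-type comparison estimate gives uniform exponential decay of $\phi_n$, hence tightness and strong $L^2$ convergence, so $\|\phi\|_{L^2}=1$. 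The key point is to pass to the limit in the two nonlocal terms $p\,(I_{\alpha_n}*(u_{\alpha_n}^{p-1}\phi_n))u_{\alpha_n}^{p-1}$ and $(p-1)(I_{\alpha_n}*u_{\alpha_n}^{p})u_{\alpha_n}^{p-2}\phi_n$: because $I_\alpha*f\to f$ as $\alpha\to0$ in the relevant Lebesgue spaces and $u_{\alpha_n}\to u_0$ in $H^1$, both converge to $(2p-1)u_0^{2p-2}\phi$, so $\phi\in\ker L_0$. Kwong's nondegeneracy then forces $\phi=\sum_i c_i\partial_{x_i}u_0$, while the orthogonality conditions pass to the limit and give $c_i=0$ for all $i$; thus $\phi\equiv0$, contradicting $\|\phi\|_{L^2}=1$.

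With nondegeneracy in hand, I would obtain uniqueness through an implicit function theorem argument in the radial class. Define $F:[0,\delta)\times H^1_{\mathrm{rad}}(\R^N)\to H^1_{\mathrm{rad}}(\R^N)$ by $F(\alpha,u)=u-(-\Delta+1)^{-1}\big[(I_\alpha*|u|^p)|u|^{p-2}u\big]$, so that $F(\alpha,u)=0$ is equivalent to \eqref{main-eq} in the radial class, and check that $F$ extends continuously to $\alpha=0$ with $F(0,u_0)=0$. The partial derivative $D_uF(0,u_0)=\mathrm{Id}-(-\Delta+1)^{-1}[(2p-1)u_0^{2p-2}\,\cdot\,]$ is a compact perturbation of the identity, and it is an isomorphism of $H^1_{\mathrm{rad}}(\R^N)$ precisely because $L_0$ is injective on radial functions: the kernel elements $\partial_{x_i}u_0$ are not radially symmetric and hence do not lie in $H^1_{\mathrm{rad}}(\R^N)$. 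The implicit function theorem then produces, for all small $\alpha$, a unique radial zero of $F$ in a fixed $H^1$ neighborhood of $u_0$. Since Theorem~\ref{profile0} guarantees that \emph{every} positive radial ground state $u_\alpha$ lies in this neighborhood once $\alpha$ is small, any two of them must coincide, which is the asserted uniqueness; nondegeneracy of this solution is the content of the previous step. Replacing Kwong's theorem and Theorem~\ref{profile0} by Theorem~\ref{prop-limit-eqN} and Theorem~\ref{profileN} yields the claim for $\alpha$ near $N$.

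The main obstacle is the rigorous control of the nonlocal nonlinearity in the two degenerating regimes. As $\alpha\to0$ the Riesz kernel concentrates to a Dirac mass and one must quantify $I_\alpha*f\to f$ strongly enough to interchange it with the weak limit of $\phi_n$ and to establish the continuity of $F$ at $\alpha=0$; as $\alpha\to N$ the rescaled kernel $|\cdot|^{-(N-\alpha)}$ instead spreads out and the convolution degenerates to the averaging functional $\int_{\R^N}|v|^p\,dx$, so the limiting linear term becomes the rank-one contribution in \eqref{linearized-limit-eqN}. In both cases the delicate step is to combine the strong $H^1$ convergence of the ground states with suitable Hardy--Littlewood--Sobolev and interpolation estimates in order to pass to the limit in products of weakly converging sequences, while simultaneously securing the uniform decay needed to prevent the loss of $L^2$ mass at infinity in the contradiction argument.
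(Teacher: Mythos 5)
Your proposal follows essentially the same architecture as the paper: uniqueness via an implicit function theorem for the fixed-point map $u\mapsto u-(-\Delta+1)^{-1}[(I_\alpha*|u|^p)|u|^{p-2}u]$ on the radial subspace (where the limit linearization is injective because the kernel elements $\partial_{x_i}u_0$ are non-radial), combined with the global localization provided by Theorems~\ref{profile0} and~\ref{profileN}; and nondegeneracy by a compactness--contradiction argument passing the linearized equation to the limit $\alpha\to 0$ or $\alpha\to N$. The one genuine divergence is in how you secure nontriviality of the weak limit $\phi$ in the nondegeneracy step: you normalize $\|\phi_n\|_{L^2}=1$ and propose to upgrade weak to strong $L^2$ convergence via uniform Agmon-type exponential decay of the $\phi_n$, whereas the paper normalizes $\|\phi_n\|_{H^1}=1$ and instead exploits that the nonlocal quadratic forms are compact (Lemma~\ref{conv-linearized0}), so that testing the equation against $\phi_n$ gives $1=\|\phi_n\|_{H^1}^2=(2p-1)\int u_0^{2p-2}\phi_0^2\,dx+o(1)$ directly, with no decay estimate needed. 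Your route is workable (the effective potentials all carry decaying factors $u_{\alpha_n}^{p-1}$ or $u_{\alpha_n}^{p-2}$), but it carries two extra burdens you should discharge explicitly: the uniform exponential decay of solutions of the \emph{linearized} (nonlocal) equation must actually be proved, and before testing the equation against $\phi_n$ one must first show that an $L^2$ solution of the linearized equation automatically lies in $H^1$ (the paper does this by verifying $L[\phi]\in H^{-1}$). The paper's normalization sidesteps both issues, which is why it is the more economical choice here.
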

\begin{remark}
Here we note that in the case that $\alpha$ is close to $0$, the uniqueness and nondegeneracy are proved only when $N = 3$
but in the case that $\alpha$ is close to $N$, these are proved for every dimension $N \geq 3$.  
\end{remark}

It is worth mentioning that unlike the family of ground states $u_\alpha$ to \eqref{main-eq}, the family of least energy nodal solutions $\tilde{u}_\alpha$ to \eqref{main-eq} (the minimal energy solution among the all nodal solutions) does not converge to any nontrivial solution of the limit equations \eqref{limit-eq0} or \eqref{limit-eqN}, even up to a translation and up to a subsequence.
Actually, the asymptotic profile of $\tilde{u}_\alpha$ is shown to be $u_0(\cdot-\xi^+_\alpha) -u_0(\cdot-\xi^-_\alpha)$ as $\alpha \to 0$ 
and $(N-\alpha)^{\frac{1}{2p-2}}(v_0(\cdot-\xi^+_\alpha) -v_0(\cdot-\xi^-_\alpha))$ as $\alpha \to N$
for some $\xi^+_\alpha,\, \xi^-_\alpha \in \R^N$ such that $\lim_{\alpha \to 0}|\xi^+_\alpha-\xi^-_\alpha| = 0$.
See \cite{RV} for the proof. 
Relying on this fact and the nondegeneracy of the ground state $u_0$ to \eqref{limit-eq0},  
it is also proved in \cite{RV} that $\tilde{u}_\alpha$ is odd-symmetric with respect to the hyperplane normal to the vector $\xi^+_\alpha-\xi^-_\alpha$ and through the point $(\xi^+_\alpha-\xi^-_\alpha)/2$ when $\alpha \sim 0$ or $\alpha \sim N$. 

The rest of this paper is organized as follows. In Section 2, we collect some useful auxiliary tools
and technical results which are frequently invoked when proving the main theorems. 
Theorem \ref{profile0} is proved in Section 3. Theorem \ref{prop-limit-eqN} and \ref{profileN} are proved in Section 4.
In subsequent sections, we prove our uniqueness and nondegeneracy results respectively.

\section{Auxiliary results}
In this section, we provide with some useful known results and auxiliary tools.
We begin with giving sharp information on the best constant of Hardy-Littlewood-Sobolev inequality. This plays an important role in our analysis. 
\begin{prop}[Hardy-Littlewood-Sobolev inequality \cite{Gra, LL}]\label{HLS}
Let $p, r > 1$ and $0 < \alpha < N$ be such that
\[
\frac{1}{p}+\frac{1}{r} = 1+\frac{\alpha}{N}.
\]
Then for any $f \in L^p(\R^N)$ and $g \in L^r(\R^N)$, one has
\[
\left|\int_{\R^N}\!\!\int_{\R^N}\frac{f(x)g(y)}{|x-y|^{N-\alpha}}\,dxdy\right| \leq C(N,\alpha,p)\|f\|_{L^p(\R^N)}\|g\|_{L^r(\R^N)}.
\]
The sharp constant satisfies
\[
C(N,\alpha,p) \leq \frac{N}{\alpha}\left(|\mathbb{S}|^{N-1}/N\right)^{\frac{N-\alpha}{N}}\frac{1}{pr}
\left(\left(\frac{(N-\alpha)/N}{1-1/p}\right)^{\frac{N-\alpha}{N}}+\left(\frac{(N-\alpha)/N}{1-1/r}\right)^{\frac{N-\alpha}{N}}\right)
\]
where $|\mathbb{S}^{N-1}|$ denotes the surface area of the $N-1$ dimensional unit sphere. 

In addition, if $p = r = \frac{2N}{N+\alpha}$, then
\[
C\left(N, \alpha, \frac{2N}{N+\alpha}\right) = \pi^{\frac{N-\alpha}{2}}\frac{\Gamma(\alpha/2)}{\Gamma((N+\alpha)/2)}\left(\frac{\Gamma(N)}{\Gamma(N/2)}\right)^{\frac{\alpha}{N}}.
\]
\end{prop}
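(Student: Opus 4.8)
The statement is classical, so I would prove it in two logically independent parts: first the inequality itself, together with the explicit (non-sharp) upper bound on $C(N,\alpha,p)$ valid for all admissible $p,r$; and then the exact evaluation of the sharp constant in the conformally invariant diagonal case $p=r=\frac{2N}{N+\alpha}$. Set $\lambda:=N-\alpha\in(0,N)$ and assume without loss of generality $f,g\ge 0$. The opening move for the general bound is the layer-cake representation $f=\int_0^\infty\mathbf 1_{\{f>a\}}\,da$, and likewise for $g$ and for the kernel $|z|^{-\lambda}=\int_0^\infty\mathbf 1_{\{|z|^{-\lambda}>c\}}\,dc$. Substituting all three into the double integral recasts the bilinear form as a triple integral over the level parameters $a,b,c$ of the quantity
\[
J(a,b,c)=\int_{\R^N}\!\!\int_{\R^N}\mathbf 1_{\{f>a\}}(x)\,\mathbf 1_{\{|x-y|^{-\lambda}>c\}}\,\mathbf 1_{\{g>b\}}(y)\,dx\,dy .
\]
Writing $u(a)=|\{f>a\}|$, $v(b)=|\{g>b\}|$ and noting that $\{|z|^{-\lambda}>c\}$ is a ball of volume $w(c)=\tfrac{|\mathbb S^{N-1}|}{N}c^{-N/\lambda}$, the elementary set estimate $J\le\min\{uv,\,uw,\,vw\}$ (each factor arising by replacing one indicator by $1$) reduces matters to an explicit one-variable integral.

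The next step is to integrate out $c$. Since $w(c)$ decreases from $+\infty$ to $0$, a direct computation of $\int_0^\infty\min\{uv,uw(c),vw(c)\}\,dc$ yields precisely $\tfrac{N}{\alpha}\big(|\mathbb S^{N-1}|/N\big)^{\lambda/N}\min(u,v)\max(u,v)^{\alpha/N}$, the prefactor $N/\alpha$ emerging from the relation $\alpha+\lambda=N$. It then remains to bound $\int_0^\infty\!\int_0^\infty\min(u,v)\max(u,v)^{\alpha/N}\,da\,db$. Splitting the $(a,b)$-plane into $\{u\ge v\}$ and $\{u<v\}$ and applying Hölder's inequality on each piece against $\|f\|_{L^p}^p=p\int_0^\infty a^{p-1}u\,da$ and $\|g\|_{L^r}^r=r\int_0^\infty b^{r-1}v\,db$, while using the scaling identity $\tfrac1p+\tfrac1r=1+\tfrac\lambda N$, produces the two symmetric summands $\big(\tfrac{(N-\alpha)/N}{1-1/p}\big)^{(N-\alpha)/N}$ and $\big(\tfrac{(N-\alpha)/N}{1-1/r}\big)^{(N-\alpha)/N}$ together with the factor $\tfrac1{pr}$, which is exactly the claimed upper bound for $C(N,\alpha,p)$. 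This portion is elementary bookkeeping.

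For the sharp value in the diagonal case I would follow the classical rearrangement–conformal approach of \cite{LL}. By the Riesz rearrangement inequality one reduces to $f=g$ radially symmetric and decreasing, and a maximizing sequence can be shown to converge to a genuine maximizer. The decisive feature is that when $p=r=\tfrac{2N}{N+\alpha}$ the functional is invariant under the full conformal group: stereographic projection to the sphere $\mathbb S^N$ transports it to a functional invariant under conformal transformations, and the conformal symmetrization (competing-symmetries) argument forces the optimizer to be the pullback of a constant, i.e.\ $f(x)=(1+|x|^2)^{-(N+\alpha)/2}$, up to translations, dilations and inversions. Substituting this explicit extremizer and evaluating $\int\!\int\frac{f(x)f(y)}{|x-y|^\lambda}\,dx\,dy$ and $\|f\|_{L^{2N/(N+\alpha)}}$ by Beta- and Gamma-function identities yields the stated value of $C\!\left(N,\alpha,\tfrac{2N}{N+\alpha}\right)$.

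The genuinely delicate step is this diagonal case: proving existence of an extremizer and, above all, carrying out the conformal symmetrization that pins down its precise shape — this is the substantive content and the main obstacle, since it requires the conformal invariance to be exploited rather than merely the homogeneity of the kernel. By contrast, the general inequality and its explicit non-sharp constant follow from the routine layer-cake computation above, and the closing Gamma-function evaluation, though lengthy, is purely mechanical.
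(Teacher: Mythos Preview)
The paper does not prove this proposition at all: it is stated as a known result with citations to \cite{Gra, LL} and no proof is given. Your outline correctly reproduces the standard arguments from those references --- the layer-cake decomposition for the general (non-sharp) bound is precisely the proof in Lieb--Loss, and the rearrangement plus conformal-invariance argument for the sharp diagonal constant is Lieb's original approach --- so there is nothing to compare against and your sketch is fine as an account of the literature proofs.
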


The following Riesz potential estimate is equivalent to Hardy-Littlewood-Sobolev inequality.
\begin{prop}[\cite{Gra, LL}]\label{est-riesz}
Let $1 \leq r < s < \infty$ and $0 < \alpha < N$ be such that
\[
\frac1r-\frac1s = \frac{\alpha}{N}.
\]
Then for any $f \in L^r(\R^N)$, one has
\[
\left\|\frac{1}{|\cdot|^{N-\alpha}}*f\right\|_{L^s(\R^N)} \leq K(N,\alpha,r) \|f\|_{L^r(\R^N)}.
\]
Here, the sharp constant $K(N,\alpha,r)$ satisfies
\[
\limsup_{\alpha\to0}\alpha\, K(N,\alpha,r) \leq \frac{2}{r(r-1)}|{\mathbb{S}}^{N-1}|.
\]
\end{prop}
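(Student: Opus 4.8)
The plan is to derive both assertions of Proposition \ref{est-riesz} from the Hardy--Littlewood--Sobolev inequality of Proposition \ref{HLS} by duality, so that the Riesz estimate and its sharp constant are read off from the corresponding quantities for the associated bilinear form.

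First I would establish the boundedness. For $f \in L^r(\R^N)$, writing $s'$ for the conjugate exponent of $s$, I use the duality characterization
\[
\left\|\frac{1}{|\cdot|^{N-\alpha}}*f\right\|_{L^s(\R^N)} = \sup_{\|g\|_{L^{s'}}\le 1}\int_{\R^N}\!\!\int_{\R^N}\frac{f(x)g(y)}{|x-y|^{N-\alpha}}\,dx\,dy.
\]
The hypothesis $\frac1r-\frac1s=\frac{\alpha}{N}$ is exactly equivalent to $\frac1r+\frac1{s'}=1+\frac{\alpha}{N}$, so Proposition \ref{HLS} applies to the pair $(r,s')$ and bounds the right-hand side by $C(N,\alpha,r)\|f\|_{L^r}\|g\|_{L^{s'}}$. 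Taking the supremum over $g$ yields the estimate with $K(N,\alpha,r)\le C(N,\alpha,r)$; testing the bilinear form against near-extremal $g$ gives the reverse inequality, so in fact $K(N,\alpha,r)=C(N,\alpha,r)$, although only the direction ``$\le$'' is needed for what follows.

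For the asymptotic bound I would insert the explicit upper bound for $C(N,\alpha,r)$ supplied by Proposition \ref{HLS}, evaluated at the conjugate pair $(r,s')$. The prefactor $\frac{N}{\alpha}$ appearing there is precisely what keeps $\alpha\,K(N,\alpha,r)$ bounded, so after multiplying through by $\alpha$ I would pass to the limit $\alpha\to0$ factor by factor: the exponents $\frac{N-\alpha}{N}$ tend to $1$, the dual exponent $s'$ tends to $r'=\frac{r}{r-1}$, the term $\bigl(\frac{|\mathbb{S}^{N-1}|}{N}\bigr)^{(N-\alpha)/N}$ tends to $\frac{|\mathbb{S}^{N-1}|}{N}$, and the two bracketed terms tend to $\frac{r}{r-1}$ and $r$ respectively. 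The prefactor $\frac{1}{rs'}$ tends to $\frac{r-1}{r^2}$. Collecting these limits (equivalently, one may use $\Gamma(\alpha/2)\sim 2/\alpha$ together with $|\mathbb{S}^{N-1}|=2\pi^{N/2}/\Gamma(N/2)$ in the Gamma-function form of the constant), the two summands combine with the algebraic prefactors to the coefficient $\frac{2}{r(r-1)}$ stated in the proposition, the factor $2$ originating from the pair of (asymptotically coincident) bracket terms. This gives the asserted inequality, which is meaningful for $r>1$; the endpoint $r=1$ is vacuous.

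The main obstacle is precisely this constant bookkeeping in the limit $\alpha\to0$: one must verify that the $1/\alpha$ singularity is cancelled exactly and track how the two finite limits of the bracketed terms assemble, together with $\frac{1}{rs'}$, into the stated coefficient, rather than losing or gaining a numerical factor along the way. A secondary, more routine point is to justify the duality step cleanly, namely the density of admissible test functions and the finiteness of the double integral guaranteed by Proposition \ref{HLS}. I remark that one could instead bypass Proposition \ref{HLS} and argue directly, splitting the kernel as $\frac{1}{|\cdot|^{N-\alpha}}=\frac{\mathbf{1}_{B_\lambda}}{|\cdot|^{N-\alpha}}+\frac{\mathbf{1}_{B_\lambda^c}}{|\cdot|^{N-\alpha}}$, controlling the local part by the Hardy--Littlewood maximal function and the tail by H\"older's inequality, and then optimizing in $\lambda$; this reproduces the boundedness but with an explicit constant carrying the maximal-function operator norm, which is less favorable for the sharp $\alpha\to0$ asymptotics, so the route through Proposition \ref{HLS} is preferable.
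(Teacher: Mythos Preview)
The paper does not supply its own proof of this proposition; it is quoted from the cited references, so there is no argument in the paper to compare against. Your duality derivation from Proposition~\ref{HLS} is the standard way to pass from the bilinear Hardy--Littlewood--Sobolev inequality to the Riesz potential bound, and it correctly shows $K(N,\alpha,r)\le C(N,\alpha,r)$ together with the $O(1/\alpha)$ behaviour as $\alpha\to 0$.

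There is, however, a genuine arithmetic slip in your asymptotic computation. The two bracketed terms are \emph{not} asymptotically coincident: with the pair $(r,s')$ inserted into the bound of Proposition~\ref{HLS}, one has $1-1/s'\to 1-1/r'=1/r$, so the brackets tend to $r'=\frac{r}{r-1}$ and to $r$ respectively. Combining these with the limits $\frac{1}{rs'}\to\frac{r-1}{r^2}$ and $\bigl(|\mathbb{S}^{N-1}|/N\bigr)^{(N-\alpha)/N}\to |\mathbb{S}^{N-1}|/N$ gives
\[
\limsup_{\alpha\to 0}\alpha\,C(N,\alpha,r)\ \le\ N\cdot\frac{|\mathbb{S}^{N-1}|}{N}\cdot\frac{r-1}{r^2}\cdot\Bigl(\frac{r}{r-1}+r\Bigr)=|\mathbb{S}^{N-1}|,
\]
not $\frac{2}{r(r-1)}|\mathbb{S}^{N-1}|$. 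Thus the route through Proposition~\ref{HLS} yields the bound $|\mathbb{S}^{N-1}|$, which coincides with the stated constant only at $r=2$, is strictly better for $1<r<2$, and is strictly weaker for $r>2$. For every downstream use in the paper (Corollary~\ref{est-riesz-uniform} and the convergence lemmas) only the finiteness of $\limsup_{\alpha\to 0}\alpha K(N,\alpha,r)$ is needed, and your argument delivers that; but it does not establish the precise constant asserted in the proposition when $r>2$. If you want that exact constant, the direct weak-type/Marcinkiewicz route you mention (and dismiss) is in fact the relevant one, since the interpolation constants there are what produce the $\frac{1}{r(r-1)}$ factor.
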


\begin{cor}\label{est-riesz-uniform}
Let $r, s$ satisfy the assumption in Proposition \ref{est-riesz}
Then for small $\alpha > 0$, there exists $C = C(N,r) > 0$ such that for any $f \in L^r(\R^N)$, 
\[
\left\|I_\alpha*f\right\|_{L^s(\R^N)} \leq C\|f\|_{L^r(\R^N)}.
\]
\end{cor}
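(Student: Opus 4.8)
The plan is to reduce the statement directly to Proposition~\ref{est-riesz} by factoring out the normalizing constant appearing in the Riesz potential, and then to check that this constant decays at precisely the rate needed to absorb the blow-up of the sharp constant $K(N,\alpha,r)$ as $\alpha\to0$. First I would write, for any $f\in L^r(\R^N)$,
\[
I_\alpha*f = \frac{\Gamma\left(\frac{N-\alpha}{2}\right)}{\Gamma\left(\frac{\alpha}{2}\right)\pi^{N/2}2^\alpha}\left(\frac{1}{|\cdot|^{N-\alpha}}*f\right),
\]
so that taking $L^s$ norms and invoking Proposition~\ref{est-riesz} gives
\[
\|I_\alpha*f\|_{L^s(\R^N)} \leq \frac{\Gamma\left(\frac{N-\alpha}{2}\right)}{\Gamma\left(\frac{\alpha}{2}\right)\pi^{N/2}2^\alpha}\,K(N,\alpha,r)\,\|f\|_{L^r(\R^N)}.
\]
It therefore suffices to show that the prefactor multiplying $\|f\|_{L^r(\R^N)}$ stays bounded as $\alpha\to0^+$ by a constant depending only on $N$ and $r$.

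Next I would analyze the asymptotics of the normalizing constant. As $\alpha\to0$ one has $\Gamma\left(\frac{N-\alpha}{2}\right)\to\Gamma(N/2)$ and $2^\alpha\to1$, while, using $\Gamma(z)\sim 1/z$ as $z\to0$, $\Gamma\left(\frac{\alpha}{2}\right)\sim 2/\alpha$. Hence
\[
\frac{\Gamma\left(\frac{N-\alpha}{2}\right)}{\Gamma\left(\frac{\alpha}{2}\right)\pi^{N/2}2^\alpha} = \frac{\Gamma(N/2)}{2\pi^{N/2}}\,\alpha\,(1+o(1)) \quad\text{as } \alpha\to0.
\]
The crucial observation is that this constant vanishes linearly in $\alpha$, which is exactly the rate at which $K(N,\alpha,r)$ blows up according to Proposition~\ref{est-riesz}.

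Finally, combining the two displays, the prefactor equals
\[
\frac{\Gamma(N/2)}{2\pi^{N/2}}\,(1+o(1))\cdot\bigl(\alpha\,K(N,\alpha,r)\bigr),
\]
and the $\limsup$ bound in Proposition~\ref{est-riesz} yields $\limsup_{\alpha\to0}\alpha\, K(N,\alpha,r)\leq \frac{2}{r(r-1)}|\mathbb{S}^{N-1}|<\infty$. Thus the prefactor is bounded for all sufficiently small $\alpha>0$ by a constant $C=C(N,r)$, which gives the claimed uniform estimate. I do not expect any genuine obstacle here, since the result is essentially the cancellation of two opposite rates; the only mild subtlety is that Proposition~\ref{est-riesz} controls the product $\alpha\, K(N,\alpha,r)$ in the limit rather than $K(N,\alpha,r)$ itself, so the argument should be phrased for $\alpha$ ranging over a small right-neighborhood of $0$, where the $\limsup$ bound guarantees a uniform numerical bound on $\alpha\, K(N,\alpha,r)$.
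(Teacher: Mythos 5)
Your argument is correct and is exactly the paper's proof, merely written out in full: the paper also deduces the corollary from Proposition~\ref{est-riesz} together with the asymptotic $\Gamma(\alpha/2)\sim 1/\alpha$ as $\alpha\to0$, so that the vanishing of the normalizing constant in $I_\alpha$ cancels the blow-up of $K(N,\alpha,r)$. No gaps; your closing remark about working on a small right-neighborhood of $0$ where the $\limsup$ bound applies is the right (and only) point of care.
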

\begin{proof}
This immediately follows from Proposition \ref{est-riesz} and the fact $\Gamma(\alpha/2) \sim 1/\alpha$ as $\alpha \to 0$.
\end{proof}

We denote by $H^1_r(\R^N)$ the space of radial functions in $H^1(\R^N)$.
The following compact embedding result is proved in \cite{Strauss}.
\begin{prop}
The Sobolev embedding $H^1_r(\R^N) \hookrightarrow L^p(\R^N)$ is compact if $2 < p < 2N/(N-2)$.
\end{prop}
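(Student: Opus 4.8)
The plan is to run the classical Strauss compactness argument, whose core is a pointwise decay estimate available only for radial functions. Let $\{u_n\}$ be an arbitrary bounded sequence in $H^1_r(\R^N)$; it suffices to extract a subsequence converging strongly in $L^p(\R^N)$ for $2 < p < 2N/(N-2)$. After passing to a subsequence I may assume $u_n \rightharpoonup u$ weakly in $H^1_r(\R^N)$, and the task is to upgrade this to strong $L^p$ convergence.

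The first and decisive step is the Strauss radial lemma: there is a constant $C_N$ such that every radial $v \in H^1(\R^N)$ satisfies
\[
|v(x)| \le C_N |x|^{-(N-1)/2}\|v\|_{H^1(\R^N)}, \qquad |x| \ge 1.
\]
Writing $v = v(r)$ with $r = |x|$ and using the identity $|v(r)|^2 = -2\int_r^\infty v(s)v'(s)\,ds$ together with $s^{N-1} \ge r^{N-1}$ and Cauchy--Schwarz, I would obtain
\[
r^{N-1}|v(r)|^2 \le 2\int_r^\infty s^{N-1}|v||v'|\,ds \le C\left(\int_{\R^N}|v|^2+|\nabla v|^2\,dx\right),
\]
which gives the claim; the identity and bound extend to all of $H^1_r(\R^N)$ by density of smooth compactly supported radial functions.

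The second step uses this decay to control the $L^p$ mass at infinity uniformly in $n$, which is exactly where the hypothesis $p > 2$ enters. Since $\{u_n\}$ is bounded in $H^1_r$, the lemma gives $|u_n(x)| \le C|x|^{-(N-1)/2}$ for $|x| \ge 1$ with $C$ independent of $n$, so for $R \ge 1$,
\[
\int_{|x|\ge R}|u_n|^p\,dx \le \left(C R^{-(N-1)/2}\right)^{p-2}\int_{\R^N}|u_n|^2\,dx \le C' R^{-(N-1)(p-2)/2}.
\]
Because $p-2 > 0$, the right-hand side tends to $0$ as $R \to \infty$ uniformly in $n$, and the same bound holds for the weak limit $u$. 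In the final step I would combine this uniform tail smallness with local compactness: given $\ep > 0$, fix $R$ so large that $\int_{|x|\ge R}(|u_n|^p + |u|^p)\,dx < \ep$ for all $n$; on the bounded domain $B_R$ the Rellich--Kondrachov theorem yields compactness of $H^1(B_R) \hookrightarrow L^p(B_R)$ for $p < 2N/(N-2)$, so along a further subsequence $u_n \to u$ in $L^p(B_R)$, and adding the tails gives $\|u_n - u\|_{L^p(\R^N)} \to 0$ after a standard diagonal extraction.

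The main obstacle is the radial decay estimate of the second paragraph: it is precisely the mechanism that defeats the translation-induced loss of compactness present in the full space $H^1(\R^N)$, and both endpoint restrictions on $p$ trace back to it. The lower bound $p > 2$ makes the tail exponent $p-2$ positive, so the decay integrates to a vanishing quantity, while the upper bound $p < 2N/(N-2)$ is what guarantees the local Rellich compactness used to close the argument.
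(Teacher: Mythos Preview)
Your argument is correct and is precisely the classical Strauss compactness proof. The paper itself does not prove this proposition; it simply cites Strauss's original article \cite{Strauss}, so there is no in-paper proof to compare against. Your writeup reproduces the standard route from that reference: the radial decay lemma $|v(x)|\le C|x|^{-(N-1)/2}\|v\|_{H^1}$, uniform tail smallness via the factor $|u_n|^{p-2}$ (which is where $p>2$ is used), and local Rellich--Kondrachov compactness on balls (which is where $p<2N/(N-2)$ is used). One cosmetic remark: the diagonal extraction you mention at the end is unnecessary, since once the weak $H^1$ limit $u$ is fixed, Rellich forces $u_n\to u$ in $L^p(B_R)$ along the \emph{full} subsequence for every $R$, and then a single choice of $R$ (depending on $\varepsilon$) followed by a choice of $n$ closes the estimate.
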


By combining Hardy-Littlewood-Sobolev inequality (Proposition \ref{HLS}) and the compact Sobolev embedding, it is easy to see the following convergence holds. 
\begin{prop}\label{convergence-simpler}
Let $\alpha \in (0,\, N)$ and $p \in (1+\frac{\alpha}{N},\, \frac{N+\alpha}{N-2})$ be given.
{Let} $\{u_j\} \subset H_r^1(\R^N)$ be a sequence converging weakly to some $u_0 \in H^1_r(\R^N)$ in $H^1(\R^N)$ as $j \to \infty$,
then
\[
\int_{\R^N}\left(\frac{1}{|\cdot|^{N-\alpha}}*|u_j|^p\right)|u_j|^p\,dx 
\to \int_{\R^N}\left(\frac{1}{|\cdot|^{N-\alpha}}*|u_0|^p\right)|u_0|^p\,dx.
\]
In addition, for any $\phi \in H^1(\R^N)$, 
\[
\int_{\R^N}\left(\frac{1}{|\cdot|^{N-\alpha}}*|u_j|^p\right)|u_j|^{p-2}u_j\phi\,dx 
\to \int_{\R^N}\left(\frac{1}{|\cdot|^{N-\alpha}}*|u_0|^p\right)|u_0|^{p-2}u_0\phi\,dx.
\]
\end{prop}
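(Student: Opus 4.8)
The plan is to funnel everything through a single strong convergence coming from the compact Sobolev embedding and then feed it into Hardy--Littlewood--Sobolev and the Riesz potential estimate. Set $q := \frac{2Np}{N+\alpha}$. The hypothesis $p \in (1+\frac{\alpha}{N},\, \frac{N+\alpha}{N-2})$ is precisely what places $q$ strictly inside the window $(2,\, \frac{2N}{N-2})$: the left endpoint $p=1+\frac{\alpha}{N}$ gives $q=2$ and the right endpoint $p=\frac{N+\alpha}{N-2}$ gives $q=\frac{2N}{N-2}$. Hence the compact embedding $H^1_r(\R^N)\hookrightarrow L^q(\R^N)$ applies, and since $u_j \rightharpoonup u_0$ weakly in $H^1(\R^N)$, I obtain $u_j \to u_0$ strongly in $L^q(\R^N)$. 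Because the superposition operators $u\mapsto |u|^p$ and $u\mapsto |u|^{p-2}u$ are continuous with subcritical growth $|t|^p$ and $|t|^{p-1}$, this upgrades to
\[
|u_j|^p \to |u_0|^p \ \text{in } L^{\frac{2N}{N+\alpha}}(\R^N), \qquad |u_j|^{p-2}u_j \to |u_0|^{p-2}u_0 \ \text{in } L^{\frac{2Np}{(N+\alpha)(p-1)}}(\R^N),
\]
where I used $q/p=\frac{2N}{N+\alpha}$ and $q/(p-1)=\frac{2Np}{(N+\alpha)(p-1)}$.

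For the first assertion I would introduce the symmetric bilinear form $D(f,g):=\int_{\R^N}\!\!\int_{\R^N}\frac{f(x)g(y)}{|x-y|^{N-\alpha}}\,dxdy$, so that the integral in question equals $D(|u_j|^p,|u_j|^p)$, and split
\[
D(|u_j|^p,|u_j|^p) - D(|u_0|^p,|u_0|^p) = D(|u_j|^p-|u_0|^p,\,|u_j|^p) + D(|u_0|^p,\,|u_j|^p-|u_0|^p).
\]
The diagonal case $p=r=\frac{2N}{N+\alpha}$ of Proposition \ref{HLS} bounds each summand by $C\,\||u_j|^p-|u_0|^p\|_{L^{2N/(N+\alpha)}}$ times a factor that stays bounded (namely $\||u_j|^p\|_{L^{2N/(N+\alpha)}}$, which is controlled by the $H^1$-bound on $u_j$, or the analogous fixed quantity for $u_0$), and the first convergence above drives this to $0$.

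For the second assertion, set $V_j:=\frac{1}{|\cdot|^{N-\alpha}}*|u_j|^p$. Proposition \ref{est-riesz} with $r=\frac{2N}{N+\alpha}$ gives $\frac1r-\frac1s=\frac{\alpha}{N}$ with $s=\frac{2N}{N-\alpha}$, so $V_j\to V_0$ strongly in $L^{2N/(N-\alpha)}(\R^N)$ with the $V_j$ uniformly bounded there. I then split
\[
\int V_j |u_j|^{p-2}u_j\phi\,dx - \int V_0 |u_0|^{p-2}u_0\phi\,dx = \int (V_j-V_0)|u_j|^{p-2}u_j\phi\,dx + \int V_0\big(|u_j|^{p-2}u_j-|u_0|^{p-2}u_0\big)\phi\,dx,
\]
and apply H\"older with the three exponents $s=\frac{2N}{N-\alpha}$, $t=\frac{2Np}{(N+\alpha)(p-1)}$ for the factor $|u|^{p-2}u$, and $m=q=\frac{2Np}{N+\alpha}$ for $\phi$. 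A direct computation gives $\frac1s+\frac1t+\frac1m=\frac{N-\alpha}{2N}+\frac{N+\alpha}{2N}=1$, and $\phi\in L^m(\R^N)$ since $m\in(2,\frac{2N}{N-2})$. Each summand is then dominated by a vanishing factor ($\|V_j-V_0\|_{L^s}$ in the first, $\||u_j|^{p-2}u_j-|u_0|^{p-2}u_0\|_{L^t}$ in the second) multiplied by bounded factors, so the whole difference tends to $0$.

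The estimates are routine; the only point demanding genuine care is the bookkeeping of exponents. The entire argument hinges on verifying that the single exponent $q=\frac{2Np}{N+\alpha}$ lands strictly inside the compactness window $(2,\frac{2N}{N-2})$ and that the H\"older triple closes consistently with the conjugate pairing $\frac{N-\alpha}{2N}+\frac{N+\alpha}{2N}=1$ produced by the Riesz estimate. Once these numerology checks are in place, no compactness or delicate estimate beyond Propositions \ref{HLS} and \ref{est-riesz} and the superposition-operator continuity is required.
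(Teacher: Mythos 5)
Your proof is correct and follows exactly the route the paper indicates for this proposition (the paper gives no detailed proof, only the remark that it follows ``by combining Hardy--Littlewood--Sobolev inequality and the compact Sobolev embedding''): you extract strong $L^{2Np/(N+\alpha)}$ convergence from the compact radial embedding and feed it into HLS and the Riesz potential bound via a bilinear splitting. The exponent bookkeeping all checks out, so your write-up simply supplies the details the paper declares routine.
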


It is useful to obtain estimates for $I_\alpha * (|u|^{p-1}u\phi)$ as $\alpha \to 0$ and $(1/|\cdot|^{N-\alpha})*(|u|^{p-1}u\phi)$ as $\alpha \to N$ when $u, \phi \in H^1(\R^N)$. 
\begin{prop}\label{est-identity}
Let $u, \phi \in H^1(\R^N)$. Then one has the following:
\begin{enumerate}[$(i)$] 
\item  For every $1 < p < \frac{N}{N-2}$ and $0 < \alpha < \frac{p-1}{2}N$, there exists $C = C(N) > 0$ independent of $\alpha$ near $0$ such that
\[
\|I_\alpha*(|u|^{p-2}u\phi)\|_{L^2(\R^N)} < C\|u\|_{H^1(\R^N)}^{p-1}\|\phi\|_{H^1(\R^N)}
\]
and
\[
\lim_{\alpha \to 0}\|~I_\alpha*(|u|^{p-2}u\phi)-|u|^{p-2}u\phi~\|_{L^2(\R^N)} =0;
\]
\item For every $2 < p < \frac{2N}{N-2}$ and $\frac{(N-2)p}{2} < \alpha < N$, there exists $C = C(N,p) > 0$ independent of $\alpha$ near $N$ such that
\[
\left\|~\frac{1}{|\cdot|^{N-\alpha}}*(|u|^{p-2}u\phi)~\right\|_{L^\infty(\R^N)} < C\|u\|_{H^1(\R^N)}^{p-1}\|\phi\|_{H^1(\R^N)}
\] 
and 
\[
\lim_{\alpha \to N}\left\|~\left(\frac{1}{|\cdot|^{N-\alpha}}*(|u|^{p-2}u\phi)\right)-\int_{\R^N}|u|^{p-2}u\phi\,dx~\right\|_{L^\infty(K)} =0
\]
for any compact set $K \subset \R^N$.
\end{enumerate}
\end{prop}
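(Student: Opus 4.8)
The plan is to prove both parts by the same two-step scheme: first extract an $\alpha$-uniform boundedness estimate from Hölder's inequality combined with the Riesz-potential bounds of Section~2, and then upgrade it to the stated convergence through a decomposition in which the boundedness estimate controls all error terms uniformly in $\alpha$. Throughout write $g := |u|^{p-2}u\phi$, so that $|g| = |u|^{p-1}|\phi|$, and set $2^* := 2N/(N-2)$. For the bound in $(i)$, define $r = r(\alpha)$ by $\tfrac1r = \tfrac12 + \tfrac{\alpha}{N}$, so $r(\alpha) \to 2$ from below as $\alpha \to 0$. Using the generalized Hölder inequality I would split $\tfrac{1}{r(\alpha)} = \tfrac{p-1}{q_1} + \tfrac{1}{q_2}$ with $q_1, q_2 \in [2, 2^*]$; this is solvable precisely because $1 < p < N/(N-2)$ and $\alpha < \tfrac{(p-1)N}{2}$ keep $\tfrac{1}{r(\alpha)}$ strictly inside $(\tfrac{p}{2^*}, \tfrac{p}{2})$, and then the Sobolev embedding $H^1(\R^N)\hookrightarrow L^{q_i}(\R^N)$ yields $\|g\|_{L^{r(\alpha)}} \le C\|u\|_{H^1}^{p-1}\|\phi\|_{H^1}$ with $C$ uniform for $\alpha$ near $0$, since $q_1, q_2$ stay in a compact subset of $(2, 2^*)$. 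Applying Corollary~\ref{est-riesz-uniform} with $s = 2$ gives $\|I_\alpha * g\|_{L^2} \le C(N, r(\alpha))\|g\|_{L^{r(\alpha)}}$, and the constant remains bounded as $r(\alpha) \to 2$; composing the two estimates proves the asserted inequality.

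For the convergence in $(i)$ the transparent case is $g \in \mathcal{S}(\R^N)$: by Plancherel and the multiplier identity $\widehat{I_\alpha}(\xi) = (2\pi|\xi|)^{-\alpha}$,
\[
\|I_\alpha * g - g\|_{L^2}^2 = \int_{\R^N}\big|(2\pi|\xi|)^{-\alpha} - 1\big|^2\,|\hat g(\xi)|^2\,d\xi,
\]
and since the multiplier tends to $1$ pointwise and is dominated (by $C(|\xi|^{-2\alpha_0} + 1)$ with $2\alpha_0 < N$ near the origin, and by $1$ away from it), dominated convergence sends the right-hand side to $0$. For general $g = |u|^{p-2}u\phi$ I would fix $r_0 < 2$ with $g \in L^{r_0}(\R^N) \cap L^2(\R^N)$ (again available since $p < N/(N-2)$), choose $g_\delta \in C_c^\infty(\R^N)$ close to $g$ in both $L^{r_0}$ and $L^2$, and write $\|I_\alpha * g - g\|_{L^2} \le \|I_\alpha *(g - g_\delta)\|_{L^2} + \|I_\alpha * g_\delta - g_\delta\|_{L^2} + \|g - g_\delta\|_{L^2}$. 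The middle term vanishes by the Schwartz case, the last is small by construction, and the first is $\le C\|g - g_\delta\|_{L^{r(\alpha)}}$, which is controlled uniformly in $\alpha$ by interpolating $L^{r(\alpha)}$ between $L^{r_0}$ and $L^2$.

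For part $(ii)$ write $K_\alpha(z) = |z|^{-(N-\alpha)}$. The $L^\infty$ bound comes from splitting $|(K_\alpha * g)(x)|$ over $\{|x-y| \ge 1\}$ and $\{|x-y| < 1\}$. On the far set $K_\alpha \le 1$, giving $\|g\|_{L^1} \le \|u\|_{L^p}^{p-1}\|\phi\|_{L^p} \le C\|u\|_{H^1}^{p-1}\|\phi\|_{H^1}$ by Hölder and $2 < p < 2^*$. On the near set Hölder with $s = \tfrac{2N}{p(N-2)} > 1$ and its conjugate $s'$ produces the factor $\big(\int_{|z|<1}|z|^{-(N-\alpha)s'}\,dz\big)^{1/s'} = \big(\tfrac{|\mathbb{S}^{N-1}|}{N - (N-\alpha)s'}\big)^{1/s'}$ times $\|g\|_{L^s}$, where $g \in L^s$ by Sobolev; the hypothesis $\alpha > \tfrac{(N-2)p}{2}$ is exactly what forces $(N-\alpha)s' < N$, so this local integral is finite and tends to $|\mathbb{S}^{N-1}|/N$ as $\alpha \to N$, yielding an $\alpha$-uniform constant. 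For the convergence on a compact $K \subset B_{R_0}$, write $(K_\alpha * g)(x) - \int_{\R^N} g\,dy = \int_{\R^N}\big(|x-y|^{-(N-\alpha)} - 1\big)g(y)\,dy$ and cut into $|x-y| < \delta$, $\delta \le |x-y| \le M$, and $|x-y| > M$. The near piece is $\le C\delta^{N/s'}\|g\|_{L^s}$ by the preceding Hölder estimate (small for small $\delta$, uniformly in $\alpha$ near $N$ and in $x$); the far piece is $\le \int_{|y| > M - R_0}|g|$ (small for large $M$, since $g \in L^1$); and on the fixed middle annulus $|z|^{-(N-\alpha)} \to 1$ uniformly, so that piece is $\le \|g\|_{L^1}\sup_{\delta \le |z| \le M}\big||z|^{-(N-\alpha)} - 1\big| \to 0$. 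Fixing $\delta$ small and $M$ large first and then letting $\alpha \to N$ gives the conclusion uniformly for $x \in K$.

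The genuine obstacle is the convergence in $(i)$: the operator $I_\alpha$ is not bounded on $L^2(\R^N)$, so one cannot argue through a single uniform $L^2 \to L^2$ estimate, and the exponent $r(\alpha)$ in the only available bound drifts with $\alpha$. Reconciling these is what forces the interpolation-based density argument, and it is the reason the boundedness estimate must be stated uniformly and the approximation carried out in $L^{r_0} \cap L^2$ rather than in a single $L^r$. Part $(ii)$ is conceptually easier; there the only real care is keeping every constant independent of $\alpha$ as $\alpha \to N$, which the choice $s = 2N/(p(N-2))$ together with the hypothesis $\alpha > (N-2)p/2$ arranges automatically.
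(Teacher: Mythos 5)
Your argument is correct, and it departs from the paper's in two respects worth recording. For part $(i)$ the paper offers no proof at all -- it simply cites \cite{S} -- whereas you give a self-contained one: the exponent bookkeeping showing $1/r(\alpha)=1/2+\alpha/N$ lies in $(p/2^*,p/2)$ is exactly what the hypotheses $p<N/(N-2)$ and $\alpha<(p-1)N/2$ are for, and the Plancherel-plus-dominated-convergence argument on Schwartz functions combined with approximation in $L^{r_0}\cap L^2$ correctly resolves the genuine difficulty you identify, namely that the only available bound has an exponent $r(\alpha)$ drifting with $\alpha$. The one point you should make explicit is that Corollary \ref{est-riesz-uniform} is stated for a fixed $r$, while you invoke it with $r=r(\alpha)\to 2$; the uniformity of the constant does hold (the explicit bound in Propositions \ref{HLS} and \ref{est-riesz} is continuous in the exponents near $r=2$), but it deserves a sentence. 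For the $L^\infty$ bound in part $(ii)$ your argument is the paper's verbatim: split the convolution at $|x-y|=1$, apply H\"older with exponent $2^*/p$ on the near piece, and observe that $\alpha>(N-2)p/2$ forces $(N-\alpha)\,2^*/(2^*-p)<N$. For the locally uniform convergence the paper instead argues by contradiction, extracting $\alpha_j\to N$ and $x_j\in K$, and applies the Vitali convergence theorem after verifying uniform integrability and tightness of $f_j(y)=g(y)/|x_j-y|^{N-\alpha_j}$; your direct three-zone decomposition is a quantitative unwinding of the same two ingredients (the $|x-y|<\delta$ piece is the uniform integrability, the $|x-y|>M$ piece is the tightness) and has the mild advantage of yielding uniformity in $x\in K$ without a compactness-by-contradiction step. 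One cosmetic slip: for $\alpha_1\le\alpha<N$ the near piece is bounded by $C\delta^{\kappa/s'}$ with $\kappa=N-(N-\alpha_1)s'>0$, not by $\delta^{N/s'}$ itself, but since this is still a fixed positive power of $\delta$ uniformly in $\alpha$, the argument is unaffected.
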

\begin{proof}
A proof for $(i)$ can be found in \cite{S}. We prove $(ii)$. Observe from the H\"older inequality that
\[
\begin{aligned}
&\left(\frac{1}{|\cdot|^{N-\alpha}}*|u|^{p-2}u\phi\right)(x) \\
&\leq \int_{B_1(x)}\frac{1}{|x-y|^{N-\alpha}}(|u|^{p-2}u\phi)(y)\,dy + \int_{B_1^c(x)}\frac{1}{|x-y|^{N-\alpha}}(|u|^{p-2}u\phi)(y)\,dy \\
& \leq\left(\int_{B_1(0)}\frac{1}{|y|^{(N-\alpha)\frac{2^*}{2^*-p}}}\,dy\right)^{\frac{2^*-p}{2^*}}\|u\|_{L^{2^*}(\R^N)}^{p-1}\|\phi\|_{L^{2^*}(\R^N)}
+\|u\|_{L^p(\R^N)}^{p-1}\|\phi\|_{L^p(\R^N)},
\end{aligned}
\]
where $2^*$ denotes the critical Sobolev exponent $2N/(N-2)$. Note from the condition on $\alpha$ that $(N-\alpha)\frac{2^*}{2^*-p} < N$ so that
the integral $\int_{B_1(0)}1/|y|^{(N-\alpha)\frac{2^*}{2^*-p}}\,dy$ is uniformly bounded for $\alpha$ sufficiently close to $N$.
This proves the former assertion of $(ii)$.
To prove the latter, we suppose the contrary. Then, there exist a compact set $K$ and sequences $\alpha_j \to N$, $x_j \in K$ such that
\begin{equation}\label{eq-2-1}
\int_{\R^N}\frac{1}{|x_j-y|^{N-\alpha_j}}(|u|^{p-2}u\phi)(y)\,dy \not\to \int_{\R^N}|u|^{p-2}u\phi\,dy \quad \text{as } j \to \infty.
\end{equation}
Define $f_j(y) = (|u|^{p-2}u\phi)(y)/|x_j-y|^{N-\alpha_j}$ so that as $j \to \infty$, $f_j(y) \to (|u|^{p-2}\phi)(y)$ almost everywhere.  
We may assume $x_j \to x_0$ as $j\to\infty$ for some $x_0 \in K$.
We claim that $f_j$ is uniformly integrable and tight in $\R^N$, i.e., for given $\ep > 0$, there exists $\delta > 0$ such that
$\int_{E}|f_j(y)|\,dy < \ep$ for every $E \subset \R^N$ satisfying $|E| < \delta$ and there exists $R > 0$ such that $\int_{B^c_R(0)}|f_j(y)|\,dy < \ep$.
Indeed, we have
\[
\begin{aligned}
\int_{E}|f_j(y)|\,dy &\leq \left(\int_{E}\frac{1}{|x_j-y|^{(N-\alpha_j)\frac{2^*}{2^*-p}}}\,dy\right)^{\frac{2^*-p}{2^*}}\|u\|_{L^{2^*}(E)}^{p-1}\|\phi\|_{L^{2^*}(E)} \\
&= \left( \left(\int_{E \cap B_1(x)} +\int_{E \cap B^c_1(x)} \right)  \frac{1}{|x_j-y|^{(N-\alpha_j)\frac{2^*}{2^*-p}}}\,dy\right)^{\frac{2^*-p}{2^*}}\|u\|_{L^{2^*}(E)}^{p-1}\|\phi\|_{L^{2^*}(E)} \\
&\leq \left(\int_{B_1(0)}\frac{1}{|y|^{(N-\alpha_j)\frac{2^*}{2^*-p}}}\,dy+|E| \right)^{\frac{2^*-p}{2^*}}\|u\|_{L^{2^*}(E)}^{p-1}\|\phi\|_{L^{2^*}(E)},
\end{aligned}
\]
which shows $f_j$ is uniformly integrable. Take also a large $R > 0$ such that $B_2(x_0) \subset B_R(0)$. Then
since $\int_{B^c_R(0)}|f_j(y)|\,dy \leq \int_{B^c_R(0)}|u(y)|^{p-1}|\phi(y)|\,dy$, the tightness of $f_j$ is proved. 
Now the Vitali convergence theorem says that $\int_{\R^N} f_j(y)\,dy \to \int_{\R^N}|u|^p(y)\,dy$, which contradicts with \eqref{eq-2-1}.
This completes the proof. 
\end{proof}

\begin{prop}\label{convergence0}
Fix $1 < p < \frac{N}{N-2}$. Let $\{\alpha_j\} > 0$ be a sequence converging to $0$ and
$\{u_j\}\subset H_r^1(\R^N)$ be a sequence converging weakly in $H^1(\R^N)$ to some $u_0 \in H^1_r(\R^N)$. 
Then as $j \to \infty$ the following holds:
\begin{enumerate}[$(i)$]
\item
$\displaystyle\int_{\R^N}(I_{\alpha_j}*|u_j|^p)|u_j|^p\,dx \to \int_{\R^N}|u_0|^{2p}\,dx$;

\item for any $\phi \in H^1(\R^N)$,
\[
\int_{\R^N}(I_{\alpha_j}*|u_j|^p)|u_j|^{p-2}u_j\phi\,dx \to \int_{\R^N}|u_0|^{2p-2}u_0\phi\,dx.
\]
\end{enumerate}
\end{prop}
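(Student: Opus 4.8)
The plan is to reduce both limits to a single mechanism—a uniform $L^2$ bound on the Riesz convolution paired against strong $L^2$ convergence of the other factor—while using that $I_\alpha*\,\cdot\,$ is self-adjoint because its kernel is even. Throughout I write $g_j=|u_j|^p$, $g_0=|u_0|^p$, and for part $(ii)$ also $h_j=|u_j|^{p-2}u_j\phi$, $h_0=|u_0|^{p-2}u_0\phi$, so that the two claims read $\int(I_{\alpha_j}*g_j)g_j\to\int g_0^2$ and $\int(I_{\alpha_j}*g_j)h_j\to\int g_0h_0$. Since $\{u_j\}$ converges weakly in $H^1$, it is bounded there, say $\sup_j\|u_j\|_{H^1}<\infty$.

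First I would record the two strong convergences coming from the compact embedding $H^1_r(\R^N)\hookrightarrow L^q(\R^N)$ for $2<q<2^*:=2N/(N-2)$. Because $1<p<N/(N-2)$ forces $2<2p<2^*$, we get $u_j\to u_0$ in $L^{2p}$, and continuity of the Nemytskii map $u\mapsto|u|^p$ from $L^{2p}$ into $L^2$ yields $g_j\to g_0$ in $L^2$. For $h_j$ I would pick Hölder exponents $q_1,q_2\in(2,2^*)$ with $\tfrac{2(p-1)}{q_1}+\tfrac{2}{q_2}=1$; such a pair exists precisely because $p\in(1,N/(N-2))$ places $1$ strictly inside the open interval $\bigl(\tfrac{p(N-2)}{N},\,p\bigr)$ of values attained by $(q_1,q_2)\mapsto\tfrac{2(p-1)}{q_1}+\tfrac{2}{q_2}$ on $(2,2^*)^2$. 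Then $\|h_j-h_0\|_{L^2}\le\|u_j\|_{L^{q_1}}^{p-1}\|\phi\|_{L^{q_2}}$-type estimates together with $u_j\to u_0$ in $L^{q_1}$ give $h_j\to h_0$ in $L^2$. This exponent bookkeeping—arranging every exponent to land in the admissible range $[2,2^*]$—is the main obstacle, and it is exactly where the hypothesis $p<N/(N-2)$ is used.

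The remaining ingredients come from Proposition \ref{est-identity}$(i)$, applied for $\alpha_j<\tfrac{p-1}{2}N$ (true for large $j$) to the admissible pairs $(u,\phi)=(u_j,u_j)$, $(u_0,u_0)$, $(u_0,\phi)$. Using $|w|^{p-2}w\cdot w=|w|^p$, this gives the uniform bounds $\|I_{\alpha_j}*g_j\|_{L^2}\le C\|u_j\|_{H^1}^p$, $\|I_{\alpha_j}*g_0\|_{L^2}\le C$, $\|I_{\alpha_j}*h_0\|_{L^2}\le C$, together with the crucial convergence $\|I_{\alpha_j}*g_0-g_0\|_{L^2}\to0$. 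In particular $I_{\alpha_j}*g_j$, $I_{\alpha_j}*g_0$, $I_{\alpha_j}*h_0$ all lie in $L^2$, so every pairing below is absolutely convergent (Cauchy--Schwarz) and the self-adjoint identity $\int(I_{\alpha_j}*f)\,k=\int f\,(I_{\alpha_j}*k)$ holds by Fubini.

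For $(i)$ I would telescope, using the self-adjoint identity to verify that the right side collapses to the left,
\[
\int(I_{\alpha_j}*g_j)g_j-\int g_0^2
=\int(I_{\alpha_j}*g_j)(g_j-g_0)
+\int(g_j-g_0)(I_{\alpha_j}*g_0)
+\int(I_{\alpha_j}*g_0-g_0)g_0 ,
\]
whose three terms are bounded by $\|I_{\alpha_j}*g_j\|_{L^2}\|g_j-g_0\|_{L^2}$, $\|g_j-g_0\|_{L^2}\|I_{\alpha_j}*g_0\|_{L^2}$, and $\|I_{\alpha_j}*g_0-g_0\|_{L^2}\|g_0\|_{L^2}$, each tending to $0$ by the previous paragraph. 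For $(ii)$ the same device gives
\[
\int(I_{\alpha_j}*g_j)h_j-\int g_0h_0
=\int(I_{\alpha_j}*g_j)(h_j-h_0)
+\int(g_j-g_0)(I_{\alpha_j}*h_0)
+\int(I_{\alpha_j}*g_0-g_0)h_0 ,
\]
and the three summands are controlled by $\|I_{\alpha_j}*g_j\|_{L^2}\|h_j-h_0\|_{L^2}$, $\|g_j-g_0\|_{L^2}\|I_{\alpha_j}*h_0\|_{L^2}$, and $\|I_{\alpha_j}*g_0-g_0\|_{L^2}\|h_0\|_{L^2}$, all $\to0$. This closes both parts.
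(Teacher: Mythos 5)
Your argument is correct and complete. The paper itself gives no proof of Proposition \ref{convergence0} (it defers to \cite{S}), but your route is essentially the one the paper uses for the $\alpha\to N$ analogue, Proposition \ref{convergenceN}: a three-term telescoping of the bilinear form (your use of the symmetry of the kernel to move $I_{\alpha_j}$ onto $g_0$ is the same as the paper's $A_j+B_j+C_j$ splitting), with Proposition \ref{est-identity}$(i)$ supplying both the $\alpha$-uniform $L^2$ bound on $I_{\alpha_j}*(|u|^{p-2}u\phi)$ and the convergence $\|I_{\alpha_j}*g_0-g_0\|_{L^2}\to 0$, and the compact radial embedding giving $g_j\to g_0$, $h_j\to h_0$ in $L^2$. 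Your exponent check — that $1<p<N/(N-2)$ is exactly what places $1$ in the open range $\bigl(\tfrac{p(N-2)}{N},\,p\bigr)$ of $\tfrac{2(p-1)}{q_1}+\tfrac{2}{q_2}$ for $q_1,q_2\in(2,2^*)$ — is right and is the only delicate point.
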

\begin{proof}
For a proof of this proposition, we refer to \cite{S}.
\end{proof}

\begin{prop}\label{convergenceN}
Fix $2 < p < \frac{2N}{N-2}$. Let $\{\alpha_j\} > 0$ be a sequence converging to $N$ and
$\{u_j\}\subset H_r^1(\R^N)$ be a sequence converging weakly in $H^1(\R^N)$ to some $u_0 \in H^1_r(\R^N)$. 
Then as $j \to \infty$ the following holds:
\begin{enumerate}[$(i)$]
\item 
$\displaystyle
\int_{\R^N}\left(\frac{1}{|\cdot|^{N-\alpha_j}}*|v_j|^p\right)|v_j|^p\,dx \to \left(\int_{\R^N}|v_0|^p\,dx\right)^2;$
\bigskip

\item for any $\phi \in H^1(\R^N)$,
\[
\int_{\R^N}\left(\frac{1}{|\cdot|^{N-\alpha_j}}*|v_j|^p\right)|v_j|^{p-2}v_j\phi\,dx \to \int_{\R^N}|v_0|^p\,dx\int_{\R^N}|v_0|^{p-2}v_0\phi\,dx.
\]
\end{enumerate}
\end{prop}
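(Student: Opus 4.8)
The plan is to exploit that, as $\alpha \to N$, the kernel $1/|x-y|^{N-\alpha}$ tends to $1$ pointwise, so that $\bigl(1/|\cdot|^{N-\alpha}\bigr)*|v|^p$ should converge to the constant $\int_{\R^N}|v|^p\,dx$; this is the $\alpha\to N$ analogue of Proposition \ref{convergence0} (where instead $I_\alpha\to\delta$). I write $v_j$ for the weakly convergent sequence (which the hypothesis calls $u_j$) and $v_0$ for its weak limit. The two limits $\alpha_j\to N$ and $v_j\rightharpoonup v_0$ occur simultaneously, and the device that decouples them is Proposition \ref{est-identity}$(ii)$: it supplies both a uniform-in-$\alpha$ bound $\|(1/|\cdot|^{N-\alpha})*(|u|^{p-2}u\phi)\|_{L^\infty}\le C\|u\|_{H^1}^{p-1}\|\phi\|_{H^1}$ and, for \emph{fixed} $u,\phi$, the local uniform convergence $(1/|\cdot|^{N-\alpha})*(|u|^{p-2}u\phi)\to\int_{\R^N}|u|^{p-2}u\phi$ on compact sets. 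Since $p\in(2,2N/(N-2))$ and $\alpha_j\to N$, the admissibility condition $\tfrac{(N-2)p}{2}<\alpha_j<N$ holds for all large $j$, so the proposition applies. I also record two strong-convergence facts coming from the compact embedding $H^1_r(\R^N)\hookrightarrow L^p(\R^N)$, namely $v_j\to v_0$ in $L^p$, whence $\||v_j|^p-|v_0|^p\|_{L^1}\to0$ and, via continuity of the Nemytskii map $v\mapsto|v|^{p-2}v$ from $L^p$ into $L^{p/(p-1)}$ followed by H\"older against $\phi\in L^p$, also $\bigl\||v_j|^{p-2}v_j\phi-|v_0|^{p-2}v_0\phi\bigr\|_{L^1}\to0$.

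For $(i)$ I abbreviate $g_j=|v_j|^p$ and $K_\alpha=1/|\cdot|^{N-\alpha}$, and split $\int(K_{\alpha_j}*g_j)g_j=\int(K_{\alpha_j}*g_j)(g_j-g_0)+\int(K_{\alpha_j}*g_0)g_j$, using symmetry of the kernel in the second term. The first term is bounded by $\|K_{\alpha_j}*g_j\|_{L^\infty}\|g_j-g_0\|_{L^1}$, and the key observation is that $g_j=|v_j|^{p-2}v_j\cdot v_j$, so Proposition \ref{est-identity}$(ii)$ with $u=\phi=v_j$ gives $\|K_{\alpha_j}*g_j\|_{L^\infty}\le C\|v_j\|_{H^1}^p$, bounded since $\{v_j\}$ is bounded in $H^1$; hence this term $\to0$. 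I split the second term once more as $\int(K_{\alpha_j}*g_0)(g_j-g_0)+\int(K_{\alpha_j}*g_0)g_0$; the former is controlled by $\|K_{\alpha_j}*g_0\|_{L^\infty}\|g_j-g_0\|_{L^1}\to0$, while for the latter I invoke the fixed-function convergence $K_{\alpha_j}*g_0\to\int_{\R^N}|v_0|^p$ uniformly on compacts together with its uniform $L^\infty$ bound: splitting $\R^N$ into a large ball $B_R$ and its complement, the tail is at most $(C+\int|v_0|^p)\int_{B_R^c}|v_0|^p$, small by integrability, and the compact part tends to $0$ by uniform convergence. Thus $\int(K_{\alpha_j}*g_0)g_0\to(\int_{\R^N}|v_0|^p)^2$, proving $(i)$.

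For $(ii)$ the argument is structurally identical with $g_j=|v_j|^p$ in the convolution and $h_j=|v_j|^{p-2}v_j\phi$ as test factor. I write $\int(K_{\alpha_j}*g_j)h_j=\int(K_{\alpha_j}*g_j)(h_j-h_0)+\int g_j(K_{\alpha_j}*h_0)$, using kernel symmetry to move the convolution onto the fixed function $h_0=|v_0|^{p-2}v_0\phi$. The first term is at most $\|K_{\alpha_j}*g_j\|_{L^\infty}\|h_j-h_0\|_{L^1}\to0$ exactly as before. In the second term, splitting $g_j=(g_j-g_0)+g_0$, the difference part is $\le\|K_{\alpha_j}*h_0\|_{L^\infty}\|g_j-g_0\|_{L^1}\to0$ (uniform bound from Proposition \ref{est-identity}$(ii)$ with $u=v_0$), and the remaining $\int g_0(K_{\alpha_j}*h_0)$ is dispatched by the same ball-plus-tail argument using $K_{\alpha_j}*h_0\to\int_{\R^N}|v_0|^{p-2}v_0\phi$ on compacts; this yields $\int_{\R^N}|v_0|^p\,dx\int_{\R^N}|v_0|^{p-2}v_0\phi\,dx$, as claimed.

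The main obstacle is the interaction of the two limits: one cannot first send $\alpha_j\to N$ in $K_{\alpha_j}*g_j$, since $g_j$ is itself moving, nor first replace $v_j$ by $v_0$, since the convolution operator depends on $j$. The crux of the resolution is the \emph{uniform}-in-$\alpha$ $L^\infty$ bound of Proposition \ref{est-identity}$(ii)$, which lets strong $L^p$ convergence annihilate every difference term irrespective of $\alpha_j$, combined with the symmetry of the kernel, which always permits placing the convolution on a genuine power $|v|^p$ or on the fixed factor $|v_0|^{p-2}v_0\phi$; what then remains is a single fixed-function limit, handled by local uniform convergence together with the $L^1$-tail estimate.
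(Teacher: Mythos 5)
Your proof is correct, and it rests on the same two pillars as the paper's argument: the uniform-in-$\alpha$ $L^\infty$ bound and the locally uniform fixed-function convergence supplied by Proposition \ref{est-identity}$(ii)$, combined with the strong $L^1$ convergence of $|v_j|^p$ from the compact embedding $H^1_r(\R^N)\hookrightarrow L^p(\R^N)$. Up to an application of Fubini your three-term decomposition coincides with the paper's $A_j+B_j+C_j$, but the Fubini step is a genuine and worthwhile improvement. The paper controls the cross term $B_j=\int_{\R^N}\bigl(|\cdot|^{-(N-\alpha_j)}*(|v_j|^p-|v_0|^p)\bigr)|v_0|^p\,dx$ by placing the $L^\infty$ estimate on the convolution of the \emph{moving} difference, which produces the quantity $\||v_j|^p-|v_0|^p\|_{L^{2^*/p}}$ with $2^*=2N/(N-2)$; sending this to zero would require strong convergence of $v_j$ in $L^{2^*}$, which weak $H^1_r$ convergence does not provide (the radial embedding is not compact at the critical exponent), so that step of the paper's proof is not justified as written. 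By rewriting the same integral as $\int_{\R^N}\bigl(|\cdot|^{-(N-\alpha_j)}*|v_0|^p\bigr)(|v_j|^p-|v_0|^p)\,dx$ and putting the sup norm on the convolution of the \emph{fixed} function, you only need $\||v_j|^p-|v_0|^p\|_{L^1}\to0$, exactly what the compact embedding delivers; the same device handles part $(ii)$, where you correctly justify $\||v_j|^{p-2}v_j\phi-|v_0|^{p-2}v_0\phi\|_{L^1}\to0$ via the Nemytskii map and H\"older. You also write out in full the ball-plus-tail argument for the fixed-function term and the entire proof of $(ii)$, both of which the paper merely asserts. In short: same strategy and same key lemma, but your symmetrization of the kernel repairs a gap in the published estimate of the middle term.
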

\begin{proof}
For {$(i)$}, we decompose as
\[
\begin{aligned}
&\int_{\R^N}\left(\frac{1}{|\cdot|^{N-\alpha_j}}*|v_j|^p\right)|v_j|^p\,dx  \\
&\quad=\int_{\R^N}\left(\frac{1}{|\cdot|^{N-\alpha_j}}*|v_j|^p\right)(|v_j|^p-|v_0|^p)\,dx
+\int_{\R^N}\left(\frac{1}{|\cdot|^{N-\alpha_j}}*(|v_j|^p-|v_0|^p)\right)|v_0|^p\,dx \\
&\quad\quad+\int_{\R^N}\left(\frac{1}{|\cdot|^{N-\alpha_j}}*|v_0|^p\right)|v_0|^p\,dx \\
&\quad=: A_j+B_j+C_j.
\end{aligned}
\]
Observe from Proposition \ref{est-identity} that
\[
|A_j| \leq C\|v_j\|_{H^1(\R^N)}^p \left\|~ |v_j|^p-|v_0|^p ~\right\|_{L^1(\R^N)} 
\]
which goes to $0$ as $j \to \infty$ by the compact Sobolev embedding $H^1_r(\R^N) \hookrightarrow L^p(\R^N)$. 
The same argument in the proof of $(ii)$ of Proposition \ref{est-identity} also applies to show that there exists a constant $C > 0$ independent of $\alpha_j$ such that
\[
|B_j| \leq C\left(\left\| ~|v_j|^p-|v_0|^p ~\right\|_{L^{2^*/p}(\R^N)} +\left\| ~|v_j|^p-|v_0|^p ~\right\|_{L^1(\R^N)}\right)\|v_0\|_{L^p(\R^N)}^p,
\]
which also goes to $0$ as $j\to\infty$. 
Finally $C_j$ goes to  $\left(\int_{\R^N}|v_0|^p\,dx\right)^2$ as $j\to\infty$ by $(i)$.

The idea of proof of {$(i)$} is equally applicable to prove {$(ii)$}. We omit it. 
\end{proof}

\section{Limit profile of ground states as $\alpha \to 0$}
In this section, we prove Theorem \ref{profile0}.
We choose an arbitrary $p \in (1,\, N/(N-2))$ and fix it throughout this section.
We denote the ground state energy level of $J_\alpha$ by $E_\alpha$. In other word, $E_\alpha = J_\alpha(u_\alpha)$ 
where $u_\alpha$ is a ground state solution to \eqref{main-eq}.
The ground state energy level $E_\alpha$ of  \eqref{main-eq} satisfies the mountain pass characterization, i.e.,
\[
E_\alpha := \min_{u  \in H^1(\R^N) \setminus \{0\}}\max_{t\geq 0}J_\alpha(tu).
\]
Recall that 
\[
J_0(u) = \frac12\int_{\R^N}|\nabla u|^2+u^2 \,dx -\frac{1}{2p}\int_{\R^N}|u|^{2p}\,dx \quad \text{on } H^1(\R^N),
\]
whose Euler-Lagrange equation is \eqref{limit-eq0}.
We define the mountain pass level of $J_0$ by
\[
E_0 := \min_{u  \in H^1(\R^N) \setminus \{0\}}\max_{t\geq 0}J_0(tu).
\]
It is a well known fact that $E_0$ is the ground state energy level of $J_0$. Namely, 
\[
E_0 = \min\{J_0(u) ~|~ u \in H^1(\R^N),\, J_0'(u) = 0,\, u \not\equiv 0 \}.
\]
{ The following lemma is proved in Claim 1 of Proposition 4.1 in \cite{RV}.
\begin{lemma}\label{upper-ener-est0}
There holds
\[
\lim_{\alpha \to 0} E_{\alpha} = E_0.
\] 
\end{lemma}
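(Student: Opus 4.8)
The plan is to establish the two inequalities $\limsup_{\alpha\to0}E_\alpha \leq E_0$ and $\liminf_{\alpha\to0}E_\alpha \geq E_0$ separately, exploiting in both directions the mountain pass characterizations of $E_\alpha$ and $E_0$ together with the convergence of the nonlinear functionals supplied by Proposition \ref{convergence0}.

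For the upper bound I would insert the limit ground state $u_0$ as a test ray into the mountain pass formula for $E_\alpha$, giving $E_\alpha \leq \max_{t\geq0}J_\alpha(tu_0)$. Writing $D_\alpha := \int_{\R^N}(I_\alpha*|u_0|^p)|u_0|^p\,dx$ and $D_0 := \int_{\R^N}|u_0|^{2p}\,dx$, applying Proposition \ref{convergence0}$(i)$ to the constant sequence $u_j \equiv u_0$ yields $D_\alpha \to D_0$ as $\alpha\to0$. Since $J_\alpha(tu_0)=\tfrac{t^2}{2}\|u_0\|_{H^1}^2-\tfrac{t^{2p}}{2p}D_\alpha$, the maximizing parameter $t_\alpha=(\|u_0\|_{H^1}^2/D_\alpha)^{1/(2p-2)}$ stays bounded and bounded away from zero, so $\max_{t\geq0}J_\alpha(tu_0)\to\max_{t\geq0}J_0(tu_0)=J_0(u_0)=E_0$. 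This gives $\limsup_{\alpha\to0}E_\alpha\leq E_0$.

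For the lower bound I would work along an arbitrary sequence $\alpha_j\to0$ realizing the liminf. Testing $J_{\alpha_j}'(u_{\alpha_j})=0$ against $u_{\alpha_j}$ gives the Nehari identity $\|u_{\alpha_j}\|_{H^1}^2=\int_{\R^N}(I_{\alpha_j}*|u_{\alpha_j}|^p)|u_{\alpha_j}|^p\,dx$, whence $E_{\alpha_j}=\tfrac{p-1}{2p}\|u_{\alpha_j}\|_{H^1}^2$; combined with the upper bound this shows $\{u_{\alpha_j}\}$ is bounded in $H^1_r(\R^N)$, so up to a subsequence $u_{\alpha_j}\rightharpoonup u_*$ weakly in $H^1$ and strongly in $L^q$ for $2<q<2^*$. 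Passing to the limit in $J_{\alpha_j}'(u_{\alpha_j})[\phi]=0$ by means of Proposition \ref{convergence0}$(ii)$ then shows that $u_*$ is a weak solution of the limit equation \eqref{limit-eq0}, i.e. $J_0'(u_*)=0$.

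The crux is to rule out $u_*\equiv0$. For this I would derive a uniform lower bound $\|u_{\alpha_j}\|_{H^1}\geq c>0$ by inserting the $\alpha$-uniform Riesz estimate of Corollary \ref{est-riesz-uniform} into the Nehari identity, obtaining $\|u_{\alpha_j}\|_{H^1}^2\leq C\|u_{\alpha_j}\|_{H^1}^{2p}$ with $C$ independent of $j$, so that $p>1$ forces the claimed bound. Since Proposition \ref{convergence0}$(i)$ gives $\|u_{\alpha_j}\|_{H^1}^2\to\int_{\R^N}|u_*|^{2p}\,dx$, the left-hand side being bounded below forces $\int_{\R^N}|u_*|^{2p}\,dx>0$, hence $u_*\not\equiv0$. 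Thus $u_*$ is a nontrivial critical point of $J_0$ and $J_0(u_*)\geq E_0$. Finally the Nehari identity for $u_*$ gives $J_0(u_*)=\tfrac{p-1}{2p}\int_{\R^N}|u_*|^{2p}\,dx=\lim_j\tfrac{p-1}{2p}\|u_{\alpha_j}\|_{H^1}^2=\lim_jE_{\alpha_j}$, so $\liminf_{\alpha\to0}E_\alpha=J_0(u_*)\geq E_0$, completing the proof. The main obstacle is precisely this nontriviality step, which hinges on the $\alpha$-uniform Hardy--Littlewood--Sobolev bound of Corollary \ref{est-riesz-uniform}: the cancellation of the $\Gamma$-factors in $I_\alpha$ against the blow-up of the Riesz constant as $\alpha\to0$ is exactly what prevents the weak limit from leaking away.
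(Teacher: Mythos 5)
Your argument is correct. Note, however, that the paper does not actually prove this lemma: it simply cites Claim 1 of Proposition 4.1 in \cite{RV}, so there is no internal proof to compare against. What you have supplied is a self-contained two-sided argument, and it is sound: the upper bound $\limsup_{\alpha\to0}E_\alpha\leq E_0$ via testing the mountain pass value of $J_\alpha$ along the ray $t\mapsto tu_0$ and using Proposition \ref{convergence0}$(i)$ for the constant sequence; and the lower bound via the Nehari identity, the uniform lower bound $\|u_{\alpha_j}\|_{H^1}^2\leq C\|u_{\alpha_j}\|_{H^1}^{2p}$ from Corollary \ref{est-riesz-uniform} (which is exactly the computation the paper itself performs in inequality \eqref{ineq1} of Lemma \ref{conv0}), and Proposition \ref{convergence0} to identify the weak limit as a nontrivial critical point of $J_0$. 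Your ordering also avoids a circularity that a careless reading of the paper might suggest: the paper's Lemma \ref{conv0} uses Lemma \ref{upper-ener-est0} to get boundedness of the ground states, whereas you derive that boundedness from the independently proven upper bound $\limsup_{\alpha\to0}E_\alpha\leq E_0$ before extracting the weak limit. The only point worth stating explicitly is that in the upper bound the maximizer $t_\alpha=(\|u_0\|_{H^1}^2/D_\alpha)^{1/(2p-2)}$ is well defined because $D_\alpha\to D_0>0$, so $D_\alpha$ is bounded away from zero for small $\alpha$; you do note this, so the proof is complete.
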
}

Choose any sequences $\{\alpha_j\} > 0$ converging to $0$ and $\{u_{\alpha_j}\}$ of positive radial ground states to \eqref{main-eq}.
\begin{lemma}\label{conv0}
There exists a positive radial solution $u_0 \in H^1(\R^N)$ to \eqref{limit-eq0} such that
$\{u_{\alpha_j}\}$ converges to $u_0$ in $H^1(\R^N)$, up to a subsequence.
\end{lemma}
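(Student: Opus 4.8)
The plan is to extract a nontrivial limit from the family $\{u_{\alpha_j}\}$ by first establishing uniform bounds, then identifying the weak limit as a solution of the limit equation, and finally upgrading weak to strong convergence via an energy argument. First I would show that $\{u_{\alpha_j}\}$ is bounded in $H^1(\R^N)$. This follows from Lemma \ref{upper-ener-est0}: since $E_{\alpha_j} = J_{\alpha_j}(u_{\alpha_j}) \to E_0$ and $u_{\alpha_j}$ is a critical point, the Nehari-type identity $J_{\alpha_j}'(u_{\alpha_j})[u_{\alpha_j}] = 0$ gives $\frac12\|u_{\alpha_j}\|_{H^1}^2 - \frac{1}{2p}\int(I_{\alpha_j}*|u_{\alpha_j}|^p)|u_{\alpha_j}|^p\,dx = 0$, so combining with the energy value yields $(\frac12 - \frac{1}{2p})\|u_{\alpha_j}\|_{H^1}^2 = E_{\alpha_j}$, which is bounded. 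Since $p > 1$, this bounds $\|u_{\alpha_j}\|_{H^1}$ uniformly. By boundedness and the fact that these are radial functions, after passing to a subsequence we obtain $u_{\alpha_j} \rightharpoonup u_0$ weakly in $H^1(\R^N)$ with $u_0 \in H^1_r(\R^N)$.

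Next I would identify $u_0$ as a weak solution of \eqref{limit-eq0}. For any test function $\phi \in H^1(\R^N)$, the weak formulation of \eqref{main-eq} reads
\[
\int_{\R^N}\nabla u_{\alpha_j}\cdot\nabla\phi + u_{\alpha_j}\phi\,dx = \int_{\R^N}(I_{\alpha_j}*|u_{\alpha_j}|^p)|u_{\alpha_j}|^{p-2}u_{\alpha_j}\phi\,dx.
\]
The left-hand side converges to $\int_{\R^N}\nabla u_0\cdot\nabla\phi + u_0\phi\,dx$ by weak convergence. For the right-hand side, part $(ii)$ of Proposition \ref{convergence0} gives precisely that it converges to $\int_{\R^N}|u_0|^{2p-2}u_0\phi\,dx$. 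Hence $u_0$ satisfies the weak form of \eqref{limit-eq0}. The positivity and radial symmetry of $u_0$ are inherited from those of $u_{\alpha_j}$ (positivity passes to the limit since the functions are nonnegative, though one must check $u_0 \not\equiv 0$, addressed below).

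The crucial step, and the one I expect to be the main obstacle, is ruling out $u_0 \equiv 0$ and then promoting weak convergence to strong convergence. To see $u_0 \not\equiv 0$, I would use the energy. If $u_0 \equiv 0$, then by part $(i)$ of Proposition \ref{convergence0} the nonlinear term $\int(I_{\alpha_j}*|u_{\alpha_j}|^p)|u_{\alpha_j}|^p\,dx \to 0$, forcing $\|u_{\alpha_j}\|_{H^1}^2 \to 0$ via the Nehari identity, hence $E_{\alpha_j} \to 0$; this contradicts $E_{\alpha_j}\to E_0 > 0$. Therefore $u_0 \not\equiv 0$, so $u_0$ is a nontrivial positive radial solution of \eqref{limit-eq0}. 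For strong convergence, I would first pass to the limit in the energy: using Lemma \ref{upper-ener-est0} together with part $(i)$ of Proposition \ref{convergence0}, one computes $J_0(u_0)$ and compares with $\lim J_{\alpha_j}(u_{\alpha_j}) = E_0$. Since $u_0$ is a nontrivial critical point of $J_0$, we have $J_0(u_0) \geq E_0$, while lower semicontinuity of the $H^1$-norm under weak convergence combined with the convergence of the nonlinear terms forces $J_0(u_0) \leq E_0$. This pins down $J_0(u_0) = E_0$ and, more importantly, shows $\|u_0\|_{H^1}^2 = \lim\|u_{\alpha_j}\|_{H^1}^2$ (the potential terms already converge by Proposition \ref{convergence0}$(i)$). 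Weak convergence together with convergence of norms yields strong convergence $u_{\alpha_j} \to u_0$ in $H^1(\R^N)$, completing the proof.
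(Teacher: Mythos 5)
Your proposal is correct and follows the same skeleton as the paper's proof: the uniform $H^1$ bound via the Nehari identity combined with Lemma \ref{upper-ener-est0}, identification of the weak limit as a solution via Proposition \ref{convergence0}$(ii)$, and strong convergence from convergence of the norms. The one place you genuinely diverge is the nontriviality of $u_0$. The paper obtains a uniform lower bound on $\|u_{\alpha_j}\|_{H^1}$ from the chain $\|u_{\alpha_j}\|_{H^1}^2 \leq C\|u_{\alpha_j}\|_{H^1}^{2p}$, which rests on the $\alpha$-uniform Hardy--Littlewood--Sobolev constant (Corollary \ref{est-riesz-uniform}); this is quantitative and independent of the energy convergence. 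You instead argue by contradiction: if $u_0 \equiv 0$ then Proposition \ref{convergence0}$(i)$ and the Nehari identity force $\|u_{\alpha_j}\|_{H^1}^2 \to 0$, hence $E_{\alpha_j} \to 0$, contradicting $E_{\alpha_j} \to E_0 > 0$. This is valid and arguably lighter (it avoids invoking the uniform Riesz estimate), but it silently uses the strict positivity of the mountain pass level $E_0$, which you should state. Your route to $\|u_{\alpha_j}\|_{H^1} \to \|u_0\|_{H^1}$ via the two-sided energy comparison $J_0(u_0) \leq E_0 \leq J_0(u_0)$ also differs slightly from the paper, which simply tests \eqref{limit-eq0} with $u_0$ and compares Nehari identities; both work, though yours additionally uses that $E_0$ is the least critical level of $J_0$. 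Finally, note that nonnegativity, not strict positivity, passes to the weak limit; as in the paper, you should finish with the strong maximum principle to conclude $u_0 > 0$.
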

\begin{proof}
Multiplying the equation \eqref{main-eq} by $u_{\alpha_j}$ and integrating by part, we get
\begin{equation}\label{equ1}
\left(\frac{1}{2}-\frac{1}{2p}\right)\|u_{\alpha_j}\|^2_{H^1(\R^N)} = J_{\alpha_j}(u_{\alpha_j})
\end{equation}
so $\|u_{\alpha_j}\|_{H^1(\R^N)}$ is uniformly bounded for $j$ by Lemma \ref{upper-ener-est0}.
Then up to a subsequence, $\{u_{\alpha_j}\}$ weakly converges in $H^1(\R^N)$ to some nonnegative radial function $u_0 \in H^1(\R^N)$.  
From Proposition \ref{convergence0} and the weak convergence of $\{u_{\alpha_j}\}$, one is able to deduce $u_0$ is a weak solution of \eqref{limit-eq0}.
In addition, we again multiply the equation \eqref{main-eq} by $u_{\alpha_j}$, multiply the equation \eqref{limit-eq0} by $u_0$ and using Proposition \ref{convergence0} to get
\[
\|u_{\alpha_j}\|^2_{H^1(\R^N)} = \int_{\R^N}(I_{\alpha_j}*|u_{\alpha_j}|^p)|u_{\alpha_j}|^p\,dx \to \int_{\R^N}|u_0|^{2p}\,dx = \|u_0\|^2_{H^1(\R^N)}
\quad\text { as } j \to \infty.
\]
Combining this with the weak convergence of $\{u_{\alpha_j}\}$, we obtain the strong convergence of $\{u_{\alpha_j}\}$ to $u_0$ in $H^1(\R^N)$.

Now, it remains to prove $u_0$ is positive. Observe from Corollary \ref{est-riesz-uniform} and Sobolev inequality that
\begin{equation}\label{ineq1}
\begin{aligned}
\|u_{\alpha_j}\|^2_{H^1(\R^N)} &= \int_{\R^N}(I_{\alpha_j}*|u_{\alpha_j}|^p)|u_{\alpha_j}|^p\,dx 
\leq C\left\|u_{\alpha_j}\right\|^{2p}_{L^{\frac{2Np}{N+{\alpha_j}}}(\R^N)} \\
&\leq C\|u_{\alpha_j}\|^{2p}_{H^1(\R^N)}. 
\end{aligned}
\end{equation}
Here, $C$ is a universal constant independent of $j$. 
Then, dividing both sides of \eqref{ineq1} by $\|u_{\alpha_j}\|^2_{H^1(\R^N)}$ and passing to a limit, 
we obtain a uniform lower bound for $\|u_{\alpha_j}\|_{H^1(\R^N)}$ which implies $u_0$ is nontrivial due to the strong convergence of $\{u_{\alpha_j}\}$.
Since $u_0$ is nonnegative, it is positive from the maximum principle. This completes the proof. 
\end{proof}

%
Then the following lemma follows. 
\begin{lemma}\label{energy}
There holds 
\[
J_0(u_0) = E_0.
\]
In other words, $u_0$ is a unique positive radial ground state to \eqref{limit-eq0}.
\end{lemma}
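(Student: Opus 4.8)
The plan is to pass to the limit directly in the identity $J_{\alpha_j}(u_{\alpha_j}) = E_{\alpha_j}$ and to match the two sides against $J_0(u_0)$ and $E_0$ respectively. Since Lemma \ref{conv0} already supplies the strong convergence $u_{\alpha_j} \to u_0$ in $H^1(\R^N)$ together with the fact that $u_0$ is a positive radial solution of \eqref{limit-eq0}, the only real work is to check that the energy functionals converge in the expected way.

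First I would record that, because $u_0$ is a nontrivial critical point of $J_0$, the characterization of $E_0$ as the ground state energy level immediately yields $J_0(u_0) \geq E_0$; this half needs nothing beyond Lemma \ref{conv0}. For the matching upper bound I would compute $\lim_{j\to\infty} J_{\alpha_j}(u_{\alpha_j})$ explicitly. Writing
\[
J_{\alpha_j}(u_{\alpha_j}) = \frac12\|u_{\alpha_j}\|_{H^1(\R^N)}^2 - \frac{1}{2p}\int_{\R^N}(I_{\alpha_j}*|u_{\alpha_j}|^p)|u_{\alpha_j}|^p\,dx,
\]
the strong convergence from Lemma \ref{conv0} controls the quadratic term, giving $\|u_{\alpha_j}\|_{H^1(\R^N)}^2 \to \|u_0\|_{H^1(\R^N)}^2$, while part $(i)$ of Proposition \ref{convergence0} controls the nonlocal term, giving $\int_{\R^N}(I_{\alpha_j}*|u_{\alpha_j}|^p)|u_{\alpha_j}|^p\,dx \to \int_{\R^N}|u_0|^{2p}\,dx$. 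Combining these, $J_{\alpha_j}(u_{\alpha_j}) \to J_0(u_0)$.

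On the other hand $J_{\alpha_j}(u_{\alpha_j}) = E_{\alpha_j}$ by definition, and Lemma \ref{upper-ener-est0} gives $E_{\alpha_j} \to E_0$. Since a convergent sequence has a single limit, comparing the two evaluations forces $J_0(u_0) = E_0$. Thus $u_0$ is a positive radial critical point realizing the ground state energy level, hence a ground state of \eqref{limit-eq0}; the well-known uniqueness of the positive radial ground state of \eqref{limit-eq0} then identifies $u_0$ with that unique ground state, which is exactly the second assertion.

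I do not expect a genuine obstacle: every ingredient — strong $H^1$ convergence, convergence of the nonlocal energy, and convergence of the energy levels — has been prepared in the preceding lemmas, so the argument is essentially an assembly of known facts. The one point demanding a little care is that the quadratic term must converge in norm rather than merely weakly, which is precisely why the \emph{strong} convergence of Lemma \ref{conv0}, and not weak convergence alone, is indispensable here.
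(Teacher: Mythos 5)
Your proposal is correct and follows essentially the same route as the paper: the paper's proof is precisely the chain of equalities $E_0 = \lim_j E_{\alpha_j} = \lim_j J_{\alpha_j}(u_{\alpha_j}) = J_0(u_0)$, using Lemma \ref{upper-ener-est0}, the strong convergence from Lemma \ref{conv0} for the quadratic term, and Proposition \ref{convergence0} for the nonlocal term. The extra preliminary observation $J_0(u_0)\geq E_0$ is harmless but not needed, since the limit computation already yields the equality directly.
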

\begin{proof}
We see from Proposition \ref{convergence0}, Lemma \ref{upper-ener-est0} and Lemma \ref{conv0} that
\[
\begin{aligned}
E_0 & = \lim_{j\to\infty}E_{\alpha_j} = \lim_{j\to\infty}J_{\alpha_j}(u_{\alpha_j}) \\
&= \lim_{j\to\infty} \left(\frac{1}{2}\|u_{\alpha_j}\|^2_{H^1(\R^N)} -\frac{1}{2p}\int_{\R^N}(I_{\alpha_j}*|u_{\alpha_j}|^p)|u_{\alpha_j}|^p\,dx\right) \\
& = \frac{1}{2}\|u_0\|^2_{H^1(\R^N)} -\frac{1}{2p}\int_{\R^N}|u_0|^{2p}\,dx = J_0(u_0),
\end{aligned}
\]
which proves the lemma. 
\end{proof}

Now, we are ready to complete the proof of Theorem \ref{profile0}. 
Let $\{u_\alpha\} \subset H^1_r(\R^N)$ be a family of positive radial ground states to \eqref{main-eq} for $\alpha$ near $0$. 
Suppose $\{u_\alpha\}$ does not converge in $H^1(\R^N)$ to the unique positive radial ground state $u_0$ of \eqref{limit-eq0}.
Then there exists a positive number $\ep_0$ and a sequence $\{\alpha_j\} \to 0$ such that $\|u_{\alpha_j}-u_0\|_{H^1(\R^N)} \geq \ep_0$
which contradicts to Lemma \ref{conv0} and Lemma \ref{energy}.

\section{Limit profile of ground states as $\alpha \to N$}
In this section, we prove Theorem \ref{prop-limit-eqN} and \ref{profileN}. Choose and fix an arbitrary $p \in (2,\, 2N/(N-2))$.
By deleting the coefficient of Riesz potential term from \eqref{main-eq}, we obtain the equation 
\begin{equation}\label{rescaled-eq}
\left\{\begin{array}{rcl}
-\Delta v +v & = &(1/|\cdot|^{N-\alpha}*|v|^p)|v|^{p-2}v  \quad \text{in } \R^N, \\
\lim_{x \to \infty}v(x) & = &0.
\end{array}\right.
\end{equation}
For simplicity, we still denote  by $J_\alpha$ the energy functional of \eqref{rescaled-eq}.
It is clear that the ground state energy level $E_\alpha$ of  \eqref{rescaled-eq} also satisfies the mountain pass characterization, 
\[
E_\alpha := \min_{v \in H^1(\R^N) \setminus \{0\}}\max_{t\geq 0}J_\alpha(tv).
\]
Recall that as $\alpha \to N$, $J_\alpha$ approaches to a limit functional
\[
J_N(v) = \frac12\int_{\R^N}|\nabla v|^2+v^2 \,dx -\frac{1}{2p}\left(\int_{\R^N}|v|^p\,dx\right)^2 \quad \text{on } H^1(\R^N),
\]
whose Euler-Lagrange equation is \eqref{limit-eqN}.

\subsection{Proof of Theorem \ref{prop-limit-eqN}}
We first prove Theorem \ref{prop-limit-eqN}.
To prove {$(i)$}, we let $v_1$ and $v_2$ be two positive radial ground states to \eqref{limit-eqN}. 
By defining $a_1 = \int_{\R^N}|v_1|^p\,dx$ and $a_2 = \int_{\R^N}|v_2|^p\,dx$, they are positive radial solutions of the equations
$-\Delta w +w  = a_1|w|^{p-2}w$ and $-\Delta w +w  = a_2|w|^{p-2}w$ respectively.
We note that $(a_1/a_2)^{1/(p-2)}v_1$ satisfies the latter equation. 
The classical result due to Kwong \cite{K} says that a positive radial solution of the latter(also former) equation is unique so one must have $(a_1/a_2)^{1/(p-2)}v_1 \equiv v_2$.
Since both of $v_1$ and $v_2$ satisfy the equation \eqref{limit-eqN}, we can conclude $(a_1/a_2)^{1/(p-2)} = 1$. 

{We next prove $(ii)$}. Let $v_0$ be the positive and radial ground state of \eqref{limit-eqN}. 
Let $a_0 = \int_{\R^N}v_0^p\,dx$. As discussed above, $v_0$ is a unique positive radial solution of
\begin{equation}\label{equ4}
-\Delta w +w = a_0|w|^{p-2}w.
\end{equation}
It is a well known fact that the linearized operator of \eqref{equ4} at $v_0$, given by
\[
L(\phi) := -\Delta \phi +\phi -(p-1)a_0v_0^{p-2}\phi
\]
admits only { solutions of the form
\begin{equation}\label{kernel-form}
\phi = \sum_{i=1}^Nc_i\partial_{x_i}v_0, \quad c_i \in \R
\end{equation}
in the space $L^2(\R^N)$. 
To the contrary, suppose that \eqref{linearized-limit-eqN} has a nontrivial solution $\phi \in L^2(\R^N)$, which is not of the form \eqref{kernel-form}.
Then we may assume that $\phi$ is $L^2$ orthogonal to $\partial_{x_i}v_0$ for every $i = 1,\dots,N$.
By denoting $\lambda := p\int_{\R^N}v_0^{p-1}\phi\,dx$, we see $L(\phi) = \lambda v_0^{p-1}$ so $\lambda$ should not be $0$. Observe that
\[
L\left(\frac{\lambda}{(2-p)a_0}v_0\right) = \frac{\lambda}{(2-p)a_0}L\left(v_0\right)
= \frac{\lambda}{(2-p)a_0}(-\Delta v_0 +v_0 -(p-1)a_0v_0^{p-1}) = \lambda v_0^{p-1}.
\]
This shows $L(\phi-\frac{\lambda}{(2-p)a_0}v_0) \equiv 0$, which implies that there are some $c_i \in \R$ such that
\begin{equation}\label{difference}
\phi -\frac{\lambda}{(2-p)a_0}v_0 = \sum_{i=1}^Nc_i\partial_{x_i}v_0. 
\end{equation}
We claim that $c_i = 0$ for all $i$.  Indeed, by multiplying the (LHS) of \eqref{difference} by $\partial_{x_j}v_0$ and integrating, we get
\[
\int_{\R^N}\phi \partial_{x_j}v_0\,dx-\frac{\lambda}{(2-p)a_0}\int_{\R^N}v_0\partial_{x_j}v_0\,dx
= -\frac{\lambda}{(2-p)a_0}\int_{\R^N}\frac12\partial_{x_j}(v_0^2)\,dx = 0.
\]
On the other hand, by multiplying \eqref{difference} by $\partial_{x_j}v_0$ and integrating, we get
\begin{multline*}
c_j\int_{\R^N}(\partial_{x_j}v_0)^2\,dx +\sum_{i\neq j} c_i\int_{\R^N}\partial_{x_i}v_0\partial_{x_j}v_0\,dx \\
= c_j\int_{\R^N}(\partial_{x_j}v_0)^2\,dx +\sum_{i\neq j} c_i\int_{\R^N}\frac{x_ix_j}{r^2}v_0'(r)\,dx
=c_j\int_{\R^N}(\partial_{x_j}v_0)^2\,dx
\end{multline*}
since $\frac{x_ix_j}{r^2}v_0'(r)$ is odd in variables $x_i$ and $x_j$.
Combining these two integrals, the claim follows. 

Now, observe that
\[
\lambda = p\int_{\R^N}v_0^{p-1}\phi\,dx = p\frac{\lambda}{(2-p)a_0}\int_{\R^N}v_0^p\,dx =p\frac{\lambda}{2-p}.
\]
This says $p = 1$ which contradicts with the hypothesis for $p$.
This completes a whole proof of Theorem \ref{prop-limit-eqN}. } \\

\subsection{Proof of Theorem \ref{profileN}}
It remains to prove Theorem \ref{profileN}.
Choose an arbitrary positive sequence $\{\alpha_j\} \to N$ and an arbitrary sequence $\{v_{\alpha_j}\}$ of positive radial ground states to \eqref{rescaled-eq}.
Analogously arguing to the previous section, one can see the following proposition also holds true.  
\begin{prop}
Choosing a subsequence, $\{v_{\alpha_j}\}$ converges in $H^1(\R^N)$ to the unique positive radial ground state of \eqref{limit-eqN}. 
\end{prop}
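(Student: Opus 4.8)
The plan is to mirror the argument of Section 3 (Lemmas \ref{conv0} and \ref{energy}), replacing the $\alpha \to 0$ tools by their $\alpha \to N$ counterparts, Propositions \ref{convergenceN} and \ref{est-identity}$(ii)$. First I would secure an energy bound. Writing $E_N := \min_{v \neq 0}\max_{t \geq 0}J_N(tv)$, which by \cite{RV} is the ground state energy of $J_N$, I note that for any fixed $v \in H^1_r(\R^N)\setminus\{0\}$ Proposition \ref{convergenceN}$(i)$ applied to the constant sequence $v_j \equiv v$ gives $J_{\alpha_j}(tv) \to J_N(tv)$ for every $t \geq 0$, whence the mountain pass characterization of $E_{\alpha_j}$ yields $\limsup_{j} E_{\alpha_j} \leq E_N$ (this may alternatively be cited as the analogue of Lemma \ref{upper-ener-est0}). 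Testing \eqref{rescaled-eq} against $v_{\alpha_j}$ gives the Nehari identity
\[
\left(\frac{1}{2}-\frac{1}{2p}\right)\|v_{\alpha_j}\|^2_{H^1(\R^N)} = J_{\alpha_j}(v_{\alpha_j}) = E_{\alpha_j},
\]
so this bound produces a uniform $H^1$ bound on $\{v_{\alpha_j}\}$. Passing to a subsequence, $v_{\alpha_j} \rightharpoonup v_0$ weakly in $H^1_r(\R^N)$ for some nonnegative radial $v_0$.

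Next I would identify and upgrade the limit. Using Proposition \ref{convergenceN}$(ii)$ together with the weak convergence, the weak formulation of \eqref{rescaled-eq} passes to the limit and shows that $v_0$ is a weak solution of \eqref{limit-eqN}. For strong convergence I would test \eqref{rescaled-eq} against $v_{\alpha_j}$ and \eqref{limit-eqN} against $v_0$ and invoke Proposition \ref{convergenceN}$(i)$:
\[
\|v_{\alpha_j}\|^2_{H^1(\R^N)} = \int_{\R^N}\left(\frac{1}{|\cdot|^{N-\alpha_j}}*|v_{\alpha_j}|^p\right)|v_{\alpha_j}|^p\,dx \to \left(\int_{\R^N}|v_0|^p\,dx\right)^2 = \|v_0\|^2_{H^1(\R^N)},
\]
so that $\|v_{\alpha_j}\|_{H^1} \to \|v_0\|_{H^1}$; combined with the weak convergence this upgrades to $v_{\alpha_j} \to v_0$ strongly in $H^1(\R^N)$.

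The step I expect to be the main obstacle is showing the limit $v_0$ is nontrivial, since a naive use of Hardy--Littlewood--Sobolev degenerates as $\alpha \to N$. Here I would exploit the uniform-in-$\alpha$ estimate of Proposition \ref{est-identity}$(ii)$: taking $u = \phi = v_{\alpha_j}$ there (so $|u|^{p-2}u\phi = |v_{\alpha_j}|^p$, as $v_{\alpha_j} > 0$) gives $\|\frac{1}{|\cdot|^{N-\alpha_j}}*|v_{\alpha_j}|^p\|_{L^\infty} \leq C\|v_{\alpha_j}\|_{H^1}^p$ with $C = C(N,p)$ independent of $\alpha_j$ near $N$; bounding the quadratic form by this $L^\infty$ norm times $\|v_{\alpha_j}\|_{L^p}^p$ and using the Sobolev embedding yields
\[
\|v_{\alpha_j}\|^2_{H^1(\R^N)} = \int_{\R^N}\left(\frac{1}{|\cdot|^{N-\alpha_j}}*|v_{\alpha_j}|^p\right)|v_{\alpha_j}|^p\,dx \leq C\|v_{\alpha_j}\|^{2p}_{H^1(\R^N)}.
\]
Since $p > 2 > 1$, dividing by $\|v_{\alpha_j}\|^2_{H^1}$ gives a uniform positive lower bound on $\|v_{\alpha_j}\|_{H^1}$, so by the strong convergence $v_0 \not\equiv 0$; the strong maximum principle then forces $v_0 > 0$.

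Finally I would identify $v_0$ as the unique ground state. Being a nontrivial solution of \eqref{limit-eqN}, $v_0$ satisfies $J_N(v_0) \geq E_N$ since $E_N$ is the ground state level \cite{RV}; on the other hand, by the strong convergence and Proposition \ref{convergenceN}$(i)$ one has $J_N(v_0) = \lim_j J_{\alpha_j}(v_{\alpha_j}) = \lim_j E_{\alpha_j} \leq E_N$. Hence $J_N(v_0) = E_N$, so $v_0$ is a positive radial ground state of \eqref{limit-eqN}, and Theorem \ref{prop-limit-eqN}$(i)$ shows it is the unique such ground state, which completes the proof.
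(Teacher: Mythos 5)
Your proposal is correct and follows essentially the same route as the paper: a uniform $H^1$ bound from the energy level and the Nehari identity, identification of the weak limit via Proposition \ref{convergenceN}, upgrade to strong convergence by comparing norms, nontriviality via the $\alpha$-uniform estimate of Proposition \ref{est-identity}$(ii)$, and finally $J_N(v_0)=E_N$. The only cosmetic difference is that you derive $\limsup_j E_{\alpha_j}\leq E_N$ directly and close the argument with the lower bound $J_N(v_0)\geq E_N$, whereas the paper simply cites the full limit $\lim_{\alpha\to N}E_\alpha=E_N$ from \cite{RV}.
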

\begin{proof}
We follow the same line in Section $3$. It is proved in Claim 1 of Proposition 5.1 in \cite{RV} that 
\[
\lim_{\alpha\to N}E_\alpha = E_N,
\]
where $E_N := \min_{v \in H^1(\R^N) \setminus \{0\}}\max_{t\geq 0}J_N(tv)$.
This implies $\|v_{\alpha_j}\|_{H^1}$ is bounded and consequently, has a weak subsequential limit $v_0 \in H^1_r(\R^N)$, which is radial and nonnegative.
Proposition \ref{convergenceN} says $v_0$ is a solution of \eqref{limit-eqN}. 
Again using Proposition \ref{convergenceN}, we have 
\[
\|v_{\alpha_j}\|_{H^1}^2 = \int_{\R^N}(\frac{1}{|\cdot|^{N-\alpha_j}}*v_{\alpha_j}^p)v_{\alpha_j}^{p}\,dx = \left(\int_{\R^N}v_0^p\,dx\right)^2 +o(1) = \|v_0\|_{H^1}^2+o(1),
\]
which says $H^1$ strong convergence of $\{v_{\alpha_j}\}$. We now invoke $(ii)$ of Proposition \ref{est-identity} to see
\[
\|v_{\alpha_j}\|_{H^1}^2 = \int_{\R^N}(\frac{1}{|\cdot|^{N-\alpha_j}}*v_{\alpha_j}^p)v_{\alpha_j}^{p}\,dx \leq C\|v_{\alpha_j}\|_{H^1}^p\|v_{\alpha_j}\|_{L^p}^p
\leq C\|v_{\alpha_j}\|_{H^1}^{2p},
\]
where $C$ is independent of $j$. This shows $v_0$ is nontrivial so that it is positive by the strong maximum principle. 
Finally, as in Lemma \ref{energy}, we can check $J_N(v_0) = E_N$, which completes the proof. 
\end{proof}

Now, we shall complete the proof of Theorem \ref{profileN}.
Fix $p \in (2,\, 2N/(N-2))$. Let $\{u_\alpha\}$ be a family of positive radial ground states to \eqref{main-eq} for $\alpha$ close to $N$. 
Then it is clear that the rescaled functions $v_\alpha := s(N,\alpha) u_\alpha$ constitute a family of positive radial ground states of \eqref{rescaled-eq} by a direct computation. 
Therefore as in the proof of Theorem \ref{profile0}, one may conclude $\lim_{\alpha\to N}\|v_\alpha -v_0\|_{H^1(\R^N)} = 0$, where we denote by $v_0$ a unique positive radial solution to \eqref{limit-eqN}.

\section{Uniqueness of ground states}
We begin this section with a simple elliptic estimate. 
\begin{lemma}\label{elliptic-regularity}
Let $\frac{2N}{N+2} \leq q \leq 2$.
Then the operator $(-\Delta+I)^{-1}$ is a bounded from $L^q(\R^N)$ into $H^1(\R^N)$.
\end{lemma}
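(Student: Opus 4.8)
The plan is to reduce the claim to an off-diagonal mapping property of the Bessel potential of order one, and then to interpolate between its two critical exponents. Throughout I would work first with $f$ in the dense class $C_c^\infty(\R^N)$, prove the a priori bound $\|u\|_{H^1}\le C\|f\|_{L^q}$ for $u:=(-\Delta+I)^{-1}f$, and extend the operator to all of $L^q(\R^N)$ by density at the end.

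First I would reduce the $L^q\to H^1$ estimate to an $L^q\to L^2$ estimate for $(-\Delta+I)^{-1/2}$. Using Plancherel together with $\hat u(\xi)=(1+|\xi|^2)^{-1}\hat f(\xi)$, one has
\[
\|u\|_{H^1(\R^N)}^2=\int_{\R^N}(1+|\xi|^2)|\hat u(\xi)|^2\,d\xi=\int_{\R^N}\frac{|\hat f(\xi)|^2}{1+|\xi|^2}\,d\xi=\|(-\Delta+I)^{-1/2}f\|_{L^2(\R^N)}^2.
\]
Thus the lemma is equivalent to the boundedness of $(-\Delta+I)^{-1/2}\colon L^q(\R^N)\to L^2(\R^N)$ for $\tfrac{2N}{N+2}\le q\le 2$, and it suffices to verify this at the two endpoint exponents. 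The endpoint $q=2$ is immediate, since on the Fourier side the multiplier $(1+|\xi|^2)^{-1/2}$ is bounded by $1$, so that $(-\Delta+I)^{-1/2}$ has $L^2\to L^2$ operator norm at most $1$.

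For the lower endpoint I would represent the operator as a convolution, $(-\Delta+I)^{-1/2}f=G_1*f$, where $G_1\ge 0$ is the Bessel kernel with Fourier transform $(1+|\xi|^2)^{-1/2}$. Recalling its classical asymptotics—$G_1(x)\sim c_N|x|^{1-N}$ as $|x|\to 0$ and exponential decay as $|x|\to\infty$—one obtains the global pointwise bound $G_1(x)\le C|x|^{1-N}$ on $\R^N\setminus\{0\}$, since the exponential tail is dominated by the polynomial profile at infinity. Consequently
\[
\bigl|(-\Delta+I)^{-1/2}f(x)\bigr|\le (G_1*|f|)(x)\le C\left(\frac{1}{|\cdot|^{N-1}}*|f|\right)(x),
\]
and because $\tfrac1q-\tfrac12=\tfrac1N$ exactly at $q=\tfrac{2N}{N+2}$, Proposition \ref{est-riesz} applied with $\alpha=1$, $r=\tfrac{2N}{N+2}$ and $s=2$ gives $\|(-\Delta+I)^{-1/2}f\|_{L^2}\le C\|f\|_{L^{2N/(N+2)}}$.

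Finally, having bounded $(-\Delta+I)^{-1/2}\colon L^q\to L^2$ at both $q=2$ and $q=\tfrac{2N}{N+2}$, I would invoke the Riesz–Thorin interpolation theorem. As both target spaces are $L^2$, the interpolated operator maps $L^q\to L^2$ for every $q$ with $\tfrac{2N}{N+2}\le q\le 2$; undoing the reduction and extending by density then yields the lemma. The only genuinely delicate ingredient is the lower endpoint: there Young's convolution inequality fails, since the comparison kernel $|x|^{1-N}$ belongs only to weak $L^{N/(N-1)}$, so one must instead use the sharp Hardy–Littlewood–Sobolev estimate of Proposition \ref{est-riesz}, whose scaling relation $\tfrac1r-\tfrac1s=\tfrac\alpha N$ matches precisely the critical exponent for the order-one potential. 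Dominating the Bessel kernel by the Riesz kernel, using the exponential decay at infinity, is exactly what allows this single application of Proposition \ref{est-riesz} to close the argument.
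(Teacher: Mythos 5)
Your proof is correct, but it takes a genuinely different and considerably heavier route than the paper. The paper's argument is a three-line energy/duality estimate: multiply $-\Delta u+u=f$ by $u$, integrate by parts, and apply H\"older to get $\|u\|_{H^1}^2\le\|f\|_{L^q}\|u\|_{L^{q/(q-1)}}$; since $q\in[\tfrac{2N}{N+2},2]$ forces $q/(q-1)\in[2,\tfrac{2N}{N-2}]$, the Sobolev inequality absorbs the last factor and yields $\|u\|_{H^1}\le C\|f\|_{L^q}$ for the whole range of $q$ at once, with no interpolation, no kernel estimates, and no Fourier analysis. Your reduction to $(-\Delta+I)^{-1/2}\colon L^q\to L^2$ via Plancherel, the pointwise domination of the Bessel kernel $G_1$ by the Riesz kernel $|x|^{1-N}$, the application of Proposition \ref{est-riesz} with $\alpha=1$ at the lower endpoint, and the Riesz--Thorin interpolation are all sound (the endpoint exponents do match, since $\tfrac{1}{q}-\tfrac12=\tfrac1N$ exactly at $q=\tfrac{2N}{N+2}$, and the interpolated target stays at $L^2$). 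What your approach buys is generality: it isolates the half-order smoothing operator and would extend to finer mapping properties of Bessel potentials; what it costs is that every ingredient (kernel asymptotics of $G_1$, the sharp Riesz potential bound, interpolation) is substantially more machinery than the lemma requires. For the purposes of this paper the duality argument is the more economical proof, but yours is a valid alternative.
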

\begin{proof}
We multiply the equation $-\Delta u + u = f$ by $u$, integrate by part and apply H\"older inequality
\[
\|u\|_{H^1(\R^N)}^2 = \int_{\R^N}fu\,dx \leq \|f\|_{L^q(\R^N)}\|u\|_{L^{q/(q-1)}(\R^N)}. 
\] 
Since $2 \leq q/(q-1) \leq 2N/(N-2)$, Sobolev inequality applies to see
\[
\|u\|_{H^1(\R^N)} \leq C\|f\|_{L^q(\R^N)}.
\]
for some $C$ depending only on $q$ and $N$. Then the density arguments complete the proof.  
\end{proof}

For $p \in (1,\, N/(N-2))$, choose and fix $\alpha_0 \in (0,\, \frac{N(p-1)}{2})$ and define an operator $A(\alpha, u)$ by
\[
A(\alpha,u) := 
\left\{\begin{array}{ll}
u-(-\Delta+I)^{-1}[(I_\alpha*|u|^p)|u|^{p-2}u] &\quad \text{if } \alpha \in (0,\, \alpha_0); \\
u-(-\Delta+I)^{-1}[|u|^{2p-2}u] &\quad\text{if } \alpha = 0. \\
\end{array}\right.
\]
For $p \in (2,\, 2N/(N-2))$, choose and fix $\alpha_N \in (\frac{(N-2)p}{2},\, N)$ and define an operator $B(\alpha, v)$ by
\[
B(\alpha,v) := 
\left\{\begin{array}{ll}
v-(-\Delta+I)^{-1}[(1/|\cdot|^{N-\alpha}*|v|^p)|v|^{p-2}v] &\quad\text{if } \alpha \in (\alpha_N,\, N); \\
v-(-\Delta+I)^{-1}\left[\left(\int_{\R^N}|v|^p\,dx\right)|v|^{p-2}v\right] &\quad\text{if } \alpha = N. \\
\end{array}\right.
\]

\begin{lemma}\label{dble}
The operator $A$ is a continuous map from $[0,\, \alpha_0) \times H^1_r(\R^N)$ into $H^1_r(\R^N)$ and
is continuously differentiable with respect to $u$ on $[0,\, \alpha_0) \times H^1_r(\R^N)$. 
The same conclusion holds true for $B$ which is a map from $(\alpha_N,\, N] \times H^1_r(\R^N)$ into $H^1(\R^N)$.
\end{lemma}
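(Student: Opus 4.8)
The plan is to exploit the common structure $A(\alpha,u)=u-(-\Delta+I)^{-1}[F_A(\alpha,u)]$ and $B(\alpha,v)=v-(-\Delta+I)^{-1}[F_B(\alpha,v)]$, where $F_A$ and $F_B$ denote the respective nonlinear terms. The identity map is trivially smooth and, by Lemma \ref{elliptic-regularity}, $(-\Delta+I)^{-1}$ is a bounded linear operator from $L^q(\R^N)$ into $H^1(\R^N)$ for every fixed $q\in[\tfrac{2N}{N+2},2]$; it also preserves radial symmetry. Thus the whole statement reduces to showing that, for a suitable such $q$, the superposition map $F_A:[0,\alpha_0)\times H^1_r(\R^N)\to L^q(\R^N)$ is jointly continuous and continuously differentiable in $u$ with jointly continuous derivative, and likewise for $F_B$. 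First I would fix once and for all an exponent $q\in[\tfrac{2N}{N+2},2]$ together with the auxiliary H\"older exponents needed below; using $u\in H^1(\R^N)\hookrightarrow L^s(\R^N)$ for every $s\in[2,2^*]$, one checks that the restrictions $p<N/(N-2)$ for $F_A$ and $p<2N/(N-2)$ for $F_B$ are precisely what make all of these pairings admissible.

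For the continuity of $F_A$ I would estimate $\|(I_\alpha*|u|^p)|u|^{p-2}u\|_{L^q(\R^N)}$ by H\"older together with the Riesz potential bound of Proposition \ref{est-riesz} and Corollary \ref{est-riesz-uniform}, which control $\|I_\alpha*|u|^p\|_{L^a(\R^N)}$ uniformly for $\alpha$ near $0$; joint continuity for $\alpha>0$ then follows from the continuous dependence of the kernel $I_\alpha$ on $\alpha$ and standard superposition-operator arguments. The genuinely delicate point is continuity up to the boundary value $\alpha=0$: taking $\phi=u$ in Proposition \ref{est-identity}$(i)$ yields $\|I_\alpha*|u|^p-|u|^p\|_{L^2(\R^N)}\to0$ as $\alpha\to0$ together with a uniform $L^2$ bound, and writing $F_A(\alpha,u)-F_A(0,u)=\big((I_\alpha*|u|^p)-|u|^p\big)|u|^{p-2}u$ and applying H\"older, pairing this $L^2$ factor against an $L^t$ norm of $|u|^{p-1}$ that is finite by the Sobolev embedding, I would conclude $\|F_A(\alpha,u)-F_A(0,u)\|_{L^q(\R^N)}\to0$.

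For differentiability I would compute the G\^ateaux derivative
\[
D_uF_A(\alpha,u)[\phi]=p\,\big(I_\alpha*(|u|^{p-2}u\phi)\big)|u|^{p-2}u+(p-1)\big(I_\alpha*|u|^p\big)|u|^{p-2}\phi\qquad(\alpha>0),
\]
with $D_uF_A(0,u)[\phi]=(2p-1)|u|^{2p-2}\phi$, and verify that the latter is the $\alpha\to0$ limit of the former. The bilinear quantities $I_\alpha*(|u|^{p-2}u\phi)$ are controlled exactly by Proposition \ref{est-identity}$(i)$ and the interior ones by the Riesz estimate, which together show that $(\alpha,u)\mapsto D_uF_A(\alpha,u)$ is continuous into $\mathcal{L}(H^1_r(\R^N),L^q(\R^N))$. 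Continuity of the derivative upgrades G\^ateaux to Fr\'echet differentiability, and composition with the bounded operator $(-\Delta+I)^{-1}$ yields the asserted continuity of $A$ and its $C^1$ dependence on $u$ with jointly continuous derivative.

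The treatment of $B$ is entirely parallel, with two modifications at the boundary $\alpha=N$. First, the kernel $1/|\cdot|^{N-\alpha}$ is now controlled in $L^\infty$ rather than through Hardy--Littlewood--Sobolev: Proposition \ref{est-identity}$(ii)$ (again with $\phi=v$) gives the uniform bound $\|1/|\cdot|^{N-\alpha}*|v|^p\|_{L^\infty(\R^N)}\le C\|v\|_{H^1(\R^N)}^p$, so all $L^q$ estimates follow immediately from H\"older. Second, the convergence to $F_B(N,v)=\big(\int_{\R^N}|v|^p\,dx\big)|v|^{p-2}v$ is only \emph{locally} uniform, so I would split $\R^N$ into a large ball $K$ and its complement, using the local uniform convergence of Proposition \ref{est-identity}$(ii)$ on $K$ together with the smallness of $\||v|^{p-1}\|_{L^q(K^c)}$ (tightness of $v\in H^1(\R^N)$) on the tail. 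I expect this boundary analysis---converting the pointwise and locally uniform information of Proposition \ref{est-identity} into the genuine $L^q$ and operator-norm convergences required for joint continuity and $C^1$ regularity---to be the main obstacle, whereas the interior regularity for $\alpha\in(0,\alpha_0)$ and $\alpha\in(\alpha_N,N)$ is routine superposition-operator bookkeeping.
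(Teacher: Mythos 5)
Your proposal is correct and follows essentially the same route as the paper: reduce via Lemma \ref{elliptic-regularity} to convergence of the nonlinear term in a suitable $L^q$, handle the boundary values $\alpha=0$ and $\alpha=N$ with Proposition \ref{est-identity} and the uniform Riesz bound of Corollary \ref{est-riesz-uniform}, and treat the derivative by the same estimates applied to the bilinear terms. The paper's displayed computation is just the triangle-inequality decomposition you describe (splitting off $I_{\alpha_j}*(|u_j|^p-|u|^p)$ and $(I_{\alpha_j}*|u|^p)-|u|^p$), and your compact-set/tail splitting for $B$ is the intended way to pass from the $L^\infty_{\mathrm{loc}}$ convergence to a global $L^{p/(p-1)}$ statement.
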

\begin{proof}
We first prove the continuity of $A$.
Let $\{(\alpha_j, u_j)\}$ be a sequence in $ \in [0,\alpha_0) \times H^1_r(\R^N)$ converging to some $(\alpha, u) \in [0, \alpha_0) \times H^1_r(\R^N)$. 
We only deal with the case $\alpha_j \neq 0$ and $\alpha = 0$. Then the remaining cases can be similarly as well as more easily dealt with. 
Lemma \ref{elliptic-regularity} shows that it is sufficient to prove  $(I_{\alpha_j}*|u_j|^p)|u_j|^{p-2}u_j$ converges to $|u|^{2p-2}u$ in $L^q(\R^N)$ for some $q \in [2N/(N+2),\, 2]$.
We select $q = 2p/(2p-1)$.  Since $p \in (1, N/(N-2))$, one can easily see $q$ belongs to the above range.  Then
\begin{equation}\label{equ55}
\begin{aligned}
&\|~(I_{\alpha_j}*|u_j|^p)|u_j|^{p-2}u_j -|u|^{2p-2}u~\|_{L^{2p/(2p-1)}(\R^N)} \\
&\leq \|~((I_{\alpha_j}*|u_j|^p)-|u_j|^p)|u_j|^{p-2}u_j~\|_{L^{2p/(2p-1)}(\R^N)}  + \|~|u_j|^{2p-2}u_j-|u|^{2p-2}u~\|_{L^{2p/(2p-1)}(\R^N)} \\
&\leq \|~(I_{\alpha_j}*|u_j|^p)-|u_j|^p~\|_{L^2(\R^N)}\|~|u_j|^{p-1}~\|_{L^{2p/(p-1)}(\R^N)} +o(1) \\
&\leq C\|~(I_{\alpha_j}*|u|^p)-|u|^p +I_{\alpha_j}*(|u_j|^p-|u|^p)+|u|^p-|u_j|^p~\|_{L^2(\R^N)} +o(1) \\
&\leq C\|~I_{\alpha_j}*(|u_j|^p-|u|^p)~\| +o(1) \leq C\|~|u_j|^p-|u|^p~\|_{L^{2N/(N+2\alpha)}(\R^N)} +o(1) =o(1),
\end{aligned}
\end{equation}
where we used H\"older inequality, Sobolev inequality, the sharp constant estimate in Proposition \ref{est-riesz}.

Differentiating $A$ with respect to $u$, we get
\[
\begin{aligned}
&\frac{\partial A}{\partial u}(\alpha, u)[\phi]  = \\ 
&\left\{\begin{array}{ll}
\phi-(-\Delta+I)^{-1}[p(I_\alpha*|u|^{p-2}u\phi)|u|^{p-2}u  +(p-1)(I_\alpha*|u|^p)|u|^{p-2}\phi ] & \\
&\text{if } \alpha \in (0,\, \alpha_0); \\
\phi-(-\Delta+I)^{-1}[(2p-1)|u|^{2p-2}\phi] & \\
&\text{if } \alpha = 0. \\
\end{array}\right.
\end{aligned}
\]
Then one can apply essentially the same argument to \eqref{equ55} to see $\partial A/\partial u$ is continuous on $[0,\, \alpha_0] \times H^1_r(\R^N)$.

We next address the operator $B$.
Let $\{(\alpha_j, v_j)\}$ be a sequence in $ \in (\alpha_N,\, N] \times H^1_r(\R^N)$ converging to some $(\alpha, v) \in (\alpha_N,\, N] \times H^1_r(\R^N)$. 
We only deal with the case $\alpha = N$ and $\alpha_j \neq N$. 
As the above, it is sufficient to show $(1/|\cdot|^{N-\alpha_j}*|v_j|^p)|v_j|^{p-2}v_j$ converges to $\left(\int_{\R^N}|v|^p\,dx\right)|v|^{p-2}v$ in $L^{p/(p-1)}(\R^N)$ for the continuity of $B$.
This follows by arguing similarly to \eqref{equ55} with Proposition \ref{est-identity}.
\end{proof}

\begin{lemma}\label{local-unique}
Let $u_0$ be a unique positive radial ground state of \eqref{limit-eq0}.
Then, there exists a neighborhood $U_0 \subset [0,\, \alpha_0) \times H^1_r(\R^N)$ of a point $(0, u_0) \in [0,\, \alpha_0) \times H^1_r(\R^N)$ such that the equation \eqref{main-eq} admits
a unique solution in $U_0$.
Let $v_0$ be a unique positive radial ground state of \eqref{limit-eqN}.
Then, there exists a neighborhood $U_N \subset (\alpha_N,\, N] \times H^1_r(\R^N)$ of a point $(N, v_0) \in (\alpha_N,\, N] \times H^1_r(\R^N)$ such that the equation \eqref{rescaled-eq} admits
a unique solution in $U_N$.
\end{lemma}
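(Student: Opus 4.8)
The plan is to obtain both local uniqueness statements from the implicit function theorem, applied to $A$ at the point $(0,u_0)$ and to $B$ at the point $(N,v_0)$. Since $u_0$ solves \eqref{limit-eq0} and $v_0$ solves \eqref{limit-eqN}, we have $A(0,u_0)=0$ and $B(N,v_0)=0$. Lemma \ref{dble} already guarantees that $A$ is continuous and continuously Fr\'echet differentiable in its second argument on $[0,\alpha_0)\times H^1_r(\R^N)$, and likewise for $B$ on $(\alpha_N,N]\times H^1_r(\R^N)$; this is exactly the regularity needed for the version of the implicit function theorem that requires only joint continuity together with a continuous partial derivative in the principal variable. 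The only remaining ingredient is the invertibility of the partial derivatives $\partial_u A(0,u_0)$ and $\partial_v B(N,v_0)$ as bounded linear operators on $H^1_r(\R^N)$; once this is established, the theorem furnishes neighborhoods $U_0$ and $U_N$ together with unique solution branches.

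For $A$, the partial derivative at the limit point is
\[
\frac{\partial A}{\partial u}(0,u_0)[\phi] = \phi - (-\Delta+I)^{-1}\big[(2p-1)u_0^{2p-2}\phi\big].
\]
Because multiplication by the exponentially decaying potential $(2p-1)u_0^{2p-2}$ followed by $(-\Delta+I)^{-1}$ is a compact operator on $H^1_r(\R^N)$ (Lemma \ref{elliptic-regularity} gives boundedness into $H^1$, and the decay of $u_0$ upgrades this to compactness), the operator $\partial_u A(0,u_0)$ has the form $I-K$ with $K$ compact and is therefore Fredholm of index zero; injectivity then forces bijectivity. To check injectivity on $H^1_r(\R^N)$, note that any $\phi$ in the kernel solves the linearized equation $-\Delta\phi+\phi=(2p-1)u_0^{2p-2}\phi$. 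By the classical nondegeneracy of the ground state of the power equation \eqref{limit-eq0} (see \cite{K}), the $L^2$-kernel of this linearization is spanned by the translation modes $\partial_{x_i}u_0$, $i=1,\dots,N$. Each $\partial_{x_i}u_0$ is odd in $x_i$ and hence non-radial, so no nonzero radial function lies in the kernel; thus $\partial_u A(0,u_0)$ is an isomorphism of $H^1_r(\R^N)$ and the implicit function theorem produces $U_0$.

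For $B$ the argument is identical in structure. Its partial derivative at $(N,v_0)$ is
\[
\frac{\partial B}{\partial v}(N,v_0)[\phi] = \phi - (-\Delta+I)^{-1}\Big[p\Big(\int_{\R^N}v_0^{p-1}\phi\,dx\Big)v_0^{p-1} + (p-1)\Big(\int_{\R^N}v_0^{p}\,dx\Big)v_0^{p-2}\phi\Big].
\]
The bracketed expression consists of a rank-one term and a multiplication by an exponentially decaying potential, so composed with $(-\Delta+I)^{-1}$ it is again compact on $H^1_r(\R^N)$; hence $\partial_v B(N,v_0)=I-\tilde K$ is Fredholm of index zero. A radial element of its kernel solves precisely the linearized equation \eqref{linearized-limit-eqN}, and Theorem \ref{prop-limit-eqN}$(ii)$ --- the nondegeneracy just established --- identifies its $L^2$-kernel with $\mathrm{span}\{\partial_{x_i}v_0\}$. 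As before these modes are non-radial, so the radial kernel is trivial and $\partial_v B(N,v_0)$ is invertible on $H^1_r(\R^N)$, yielding $U_N$.

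The essential content of the proof is thus the invertibility of the two linearizations, and the point that makes this work is the restriction to the radial subspace $H^1_r(\R^N)$: in the full space both linearized operators are degenerate, their kernels consisting exactly of the translation modes, but these modes are odd and disappear upon symmetrization. I expect the only delicate step to be the compactness of the operators $K$ and $\tilde K$, which relies on the exponential decay of $u_0$ and $v_0$ (property $(iii)$ recalled for \eqref{limit-eqN}, and its analogue for \eqref{limit-eq0}) to pass from the mere boundedness of $(-\Delta+I)^{-1}$ supplied by Lemma \ref{elliptic-regularity} to compactness; the Fredholm alternative and the implicit function theorem are then routine.
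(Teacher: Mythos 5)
Your proposal is correct and follows essentially the same route as the paper: implicit function theorem applied to $A$ and $B$, with invertibility of the linearizations on $H^1_r(\R^N)$ obtained from compactness of the perturbation (via exponential decay of $u_0$, $v_0$), the Fredholm alternative, and triviality of the radial kernel coming from the nondegeneracy results. The only difference is cosmetic --- the paper treats the case of $A$ explicitly and dismisses $B$ with ``the latter assertion follows similarly,'' whereas you spell out both, including the rank-one term in $\partial_v B(N,v_0)$.
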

\begin{proof}
We only prove the former assertion. The latter assertion follows similarly. 
We claim that the linearized operator of $A$ with respect to $u$ at $(0, u_0)$, namely $\frac{\partial A}{\partial u}(0,u_0)$ is a linear isomorphism from $H^1_r(\R^N)$ into $H^1_r(\R^N)$.
Observe
\[
\frac{\partial A}{\partial u}(0,u_0)[\phi] = \phi-(2p-1)(-\Delta+I)^{-1}[u_0^{2p-2}\phi].
\]
Since $u_0$ decays exponentially, the map $\phi \mapsto u_0^{2p-2}\phi$ is compact from $H^1_r(\R^N)$ into $L^2(\R^N)$ 
so the composite map $\phi \mapsto (-\Delta+I)^{-1}[u_0^{2p-2}\phi]$ is also compact from $H^1_r(\R^N)$ into $H^1_r(\R^N)$. This also shows $\frac{\partial A}{\partial u}(0,u_0)$ is bounded. 
One can deduce from the radial linearized nondegeneracy of $u_0$ that the kernel of $\frac{\partial A}{\partial u}(0,u_0)$ is trivial. 
Then the Fredholm alternative applies to see that $\frac{\partial A}{\partial u}(0,u_0)$ is onto map so the claim is proved. 
We invoke the implicit function theorem to complete the proof. 

\end{proof}

Now, we claim that \eqref{main-eq} admits a unique positive radial ground state for $p \in (1,\, N/(N-2))$ and $\alpha$ close to $0$.
Suppose the contrary. Then there exists sequences $\{\alpha_j\} > 0$, $\{u_{\alpha_j}^1\} \subset H^1_r(\R^N)$ and $\{u_{\alpha_j}^2\} \subset H^1_r(\R^N)$ such that
$\alpha_j \to 0$ as $j \to \infty$, $\{u_{\alpha_j}^1\}$ and $\{u_{\alpha_j}^2\}$ are sequences of positive radial ground states of \eqref{main-eq} and $u_{\alpha_j}^1 \neq u_{\alpha_j}^2$ for all $j$. 
Theorem \ref{profile0} tells us that  both of $\{u_{\alpha_j}^1\}$ and $\{u_{\alpha_j}^2\}$ converge to a unique positive radial solution $u_0$ of \eqref{limit-eq0} in $H^1(\R^N)$. 
This however contradicts with Lemma \ref{local-unique} and thus shows the uniqueness of a positive radial ground state of \eqref{main-eq} for $p \in (1,\, N/(N-2))$ and $\alpha$ close to $0$.
Note that the analogous conclusion holds for a family of ground states $\{v_\alpha\}$ of \eqref{rescaled-eq} when $p \in (2,\, 2N/(N-2))$ and $\alpha$ close to $N$. 
Scaling back, this also shows the uniqueness of a positive radial ground state of \eqref{main-eq} when $p \in (2,\, 2N/(N-2))$ and $\alpha$ close to $N$.

\section{Nondegeneracy of ground states}
\subsection{Nondegeneracy for $\alpha$ near 0}
Throughout this subsection, we fix $N = 3$ due to the restriction $p \in [2, N/(N-2))$.
We begin with proving a convergence lemma similar to Proposition \ref{convergence0} but slightly different.
\begin{lemma}\label{conv-linearized0}
For given $p \in [2, 3)$, let $u_\alpha$ be a family of the unique positive radial ground states of \eqref{main-eq} and $u_0$ be the positive radial ground state to \eqref{limit-eq0}.
Then, for any $\{\alpha_j\} \to 0$ and $\{\psi_j\}, \{\phi_j\} \subset H^1$ weakly $H^1$ converging to $\phi_0$ and $\psi_0$, there holds the following: 
\begin{enumerate}[$(i)$]
\item
$\displaystyle\int_{\R^N}(I_{\alpha_j}*(u_{\alpha_j}^{p-1}\phi_j))u_{\alpha_j}^{p-1}\psi_j\,dx \to \int_{\R^N}u_0^{2p-2}\phi_0\psi_0\,dx$;
 
\item
$\displaystyle\int_{\R^N}(I_{\alpha_j}*u_{\alpha_j}^p)u_{\alpha_j}^{p-2}\phi_j\psi_j\,dx \to \int_{\R^N}u_0^{2p-2}\phi_0\psi_0\,dx.$
\end{enumerate}
\end{lemma}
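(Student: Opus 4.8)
The plan is to exploit two facts from the preceding analysis. First, by Theorem \ref{profile0} the ground states converge \emph{strongly}, $u_{\alpha_j}\to u_0$ in $H^1(\R^N)$; since $N=3$, this gives $u_{\alpha_j}\to u_0$ in every $L^q$ with $2\le q\le 6$, hence $u_{\alpha_j}^{p-1}\to u_0^{p-1}$ in $L^3$ (as $3(p-1)\in[3,6)$) and $u_{\alpha_j}^p\to u_0^p$ in $L^2$ (as $2p\in[4,6)$). Second, $u_0$ decays exponentially, so that multiplication by a fixed power of $u_0$ is a \emph{compact} map from $H^1(\R^3)$ into $L^2(\R^3)$, the observation already used in Lemma \ref{local-unique}. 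The entire difficulty is that $\phi_j,\psi_j$ converge only weakly; the compact weight is the device that upgrades this to strong $L^2$ convergence of the ground-state-weighted products, which is what makes the limit identifiable. This is precisely the feature not covered by Proposition \ref{convergence0}, which deals with a single, strongly varying density.

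For $(i)$ I would symmetrize using that $I_\alpha$ is even, $\int(I_\alpha\!*\!f)g=\int f\,(I_\alpha\!*\!g)$, and decompose
\[
\int(I_{\alpha_j}\!*\!(u_{\alpha_j}^{p-1}\phi_j))\,u_{\alpha_j}^{p-1}\psi_j\,dx = L_j + R_j,
\]
where $L_j:=\int u_{\alpha_j}^{2p-2}\phi_j\psi_j\,dx$ and $R_j:=\int (u_{\alpha_j}^{p-1}\phi_j)\big[I_{\alpha_j}\!*\!(u_{\alpha_j}^{p-1}\psi_j)-u_{\alpha_j}^{p-1}\psi_j\big]\,dx$. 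For the local term I first show $u_{\alpha_j}^{p-1}\phi_j\to u_0^{p-1}\phi_0$ and $u_{\alpha_j}^{p-1}\psi_j\to u_0^{p-1}\psi_0$ strongly in $L^2$, by splitting $u_{\alpha_j}^{p-1}\phi_j-u_0^{p-1}\phi_0=(u_{\alpha_j}^{p-1}-u_0^{p-1})\phi_j+u_0^{p-1}(\phi_j-\phi_0)$: the first summand tends to $0$ in $L^2$ since $u_{\alpha_j}^{p-1}\to u_0^{p-1}$ in $L^3$ while $\phi_j$ is bounded in $L^6$, and the second tends to $0$ in $L^2$ by compactness of $\phi\mapsto u_0^{p-1}\phi$ applied to $\phi_j-\phi_0\rightharpoonup0$. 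Writing $L_j=\int(u_{\alpha_j}^{p-1}\phi_j)(u_{\alpha_j}^{p-1}\psi_j)\,dx$, the product of two strongly $L^2$-convergent sequences, gives $L_j\to\int u_0^{2p-2}\phi_0\psi_0\,dx$, the claimed limit.

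The remainder $R_j$ is the heart of the matter, and the de-convolution estimate hidden in it is the \textbf{main obstacle}. By H\"older, $|R_j|\le\|u_{\alpha_j}^{p-1}\phi_j\|_{L^2}\,\|I_{\alpha_j}\!*\!h_j-h_j\|_{L^2}$ with $h_j:=u_{\alpha_j}^{p-1}\psi_j$, the first factor bounded (pair $u_{\alpha_j}^{p-1}\in L^3$ with $\phi_j\in L^6$), so one must prove $\|I_{\alpha_j}\!*\!h_j-h_j\|_{L^2}\to0$. Proposition \ref{est-identity}$(i)$ delivers this only for a \emph{fixed} density, whereas $h_j$ varies through both the ground state and the weakly convergent $\psi_j$. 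I would bridge the gap by comparing with the fixed limit $w:=u_0^{p-1}\psi_0$,
\[
I_{\alpha_j}\!*\!h_j-h_j=\big(I_{\alpha_j}\!*\!w-w\big)+I_{\alpha_j}\!*\!(h_j-w)-(h_j-w).
\]
The first bracket tends to $0$ in $L^2$ by Proposition \ref{est-identity}$(i)$; the difference $h_j-w\to0$ in $L^2$ by the argument used for $L_j$; and for the convolution of $h_j-w$ I invoke the uniform Hardy--Littlewood--Sobolev bound $\|I_{\alpha_j}\!*\!g\|_{L^2}\le C\|g\|_{L^{2N/(N+2\alpha_j)}}$ of Corollary \ref{est-riesz-uniform}. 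The key point is that $h_j-w$ is bounded in $L^{6/(p+2)}$, an exponent strictly below $2$ (obtained by pairing $u^{p-1}\in L^{6/(p-1)}$ against the $L^2$ factor), while tending to $0$ in $L^2$; since $2N/(N+2\alpha_j)\to 2^-$, interpolation between the sub-$2$ bound and the $L^2$ convergence yields $\|h_j-w\|_{L^{2N/(N+2\alpha_j)}}\to0$, hence $\|I_{\alpha_j}\!*\!(h_j-w)\|_{L^2}\to0$. Thus $R_j\to0$ and $(i)$ follows.

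Statement $(ii)$ is handled by the same scheme. I keep the density in place and write the integrand as $W_j\,\phi_j\psi_j$ with the weight $W_j:=(I_{\alpha_j}\!*\!u_{\alpha_j}^p)\,u_{\alpha_j}^{p-2}$. One shows $I_{\alpha_j}\!*\!u_{\alpha_j}^p\to u_0^p$ in $L^2$ exactly as in the de-convolution step above (now with the single, \emph{strongly} convergent density $u_{\alpha_j}^p$, which lies in $L^{t}$ for some $t<2$ and so feeds the same interpolation), while $u_{\alpha_j}^{p-2}\to u_0^{p-2}$ in $L^{6/(p-2)}$; hence $W_j\to u_0^{2p-2}$ strongly in $L^{6/(p+1)}$. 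Splitting
\[
\int W_j\,\phi_j\psi_j\,dx=\int u_0^{2p-2}\phi_j\psi_j\,dx+\int\big(W_j-u_0^{2p-2}\big)\phi_j\psi_j\,dx,
\]
the first term converges to $\int u_0^{2p-2}\phi_0\psi_0\,dx$ by the same compact-weight argument as for $L_j$, and the second is $o(1)$ by H\"older, since $W_j-u_0^{2p-2}\to0$ in $L^{6/(p+1)}$ while $\phi_j\psi_j$ is bounded in the conjugate exponent $L^{6/(5-p)}$ (note $6/(5-p)\le 3$). The case $p=2$ is the same, with $u_{\alpha_j}^{p-2}\equiv1$ and the decay supplied entirely by $I_{\alpha_j}\!*\!u_{\alpha_j}^2\to u_0^2$. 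In short, the only delicate point throughout is trading the Riesz convolution for the identity in $L^2$ against densities built from weakly convergent test functions; once the exponentially decaying ground-state weight is used to recover strong $L^2$ compactness and a sub-$2$ integrability bound is extracted for the interpolation against Corollary \ref{est-riesz-uniform}, everything else is routine.
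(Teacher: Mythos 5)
Your proposal is correct and follows essentially the same route as the paper: isolate the local term $\int u_{\alpha_j}^{2p-2}\phi_j\psi_j$ via strong $L^2$ convergence of $u_{\alpha_j}^{p-1}\phi_j$ (exponential decay of $u_0$ plus $H^1$-convergence of the ground states), treat the convolution-minus-identity remainder by comparing with the fixed density $u_0^{p-1}\psi_0$ so that Proposition \ref{est-identity}$(i)$ applies, and kill the varying part with the uniform Riesz bound of Corollary \ref{est-riesz-uniform} combined with an $L^1$(or sub-$L^2$)--$L^2$ interpolation. Your arrangement of exponents in that last step is the transposed (dual) version of the paper's, and your explicit treatment of $(ii)$ fills in what the paper dismisses as ``the same line,'' but the substance is identical.
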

\begin{proof}

We first note that $u_0^{p-1}\phi_j$ is compact in $L^2$ due to the uniform decaying property of $u_0$. 
Then one has from H\"older inequality, 
\[
\begin{aligned}
\|u_{\alpha_j}^{p-1}\phi_j-u_0^{p-1}\phi_0\|_{L^2} &\leq \|(u_{\alpha_j}^{p-1}-u_0^{p-1})\phi_j\|_{L^2} + \|u_0^{p-1}\phi_j-u_0^{p-1}\phi_0\|_{L^2} \\
&\leq \|u_{\alpha_j}^{p-1}-u_0^{p-1}\|_{L^{2p/(p-1)}}\|\phi_j\|_{L^{2p}} +o(1) \\
&\leq C\| ~|u_{\alpha_j}-u_0|(|u_{\alpha_j}|^{p-2}+|u_0|^{p-2})~\|_{L^{2p/(p-1)}}\|\phi_j\|_{L^{2p}} +o(1)\\
&\leq \|u_{\alpha_j}-u_0\|_{L^{2p}}(\|u_{\alpha_j}\|_{L^{2p}}^{p-2}+\|u_0\|_{L^{2p}}^{p-2} )\|\phi_j\|_{L^{2p}}+o(1)\\
&= o(1),
\end{aligned}
\]
from which we deduce $u_{\alpha_j}^{p-1}\phi_j$ is also compact in $L^2$.
We decompose the LHS of $(i)$ by 
\[
\begin{aligned}
&\int_{\R^3}(I_{\alpha_j}*(u_{\alpha_j}^{p-1}\phi_j))u_{\alpha_j}^{p-1}\psi_j\,dx  \\
& =\int_{\R^3}u_{\alpha_j}^{p-1}\phi_ju_{\alpha_j}^{p-1}\psi_j\,dx + \int_{\R^3}((I_{\alpha_j}*(u_{\alpha_j}^{p-1}\phi_j))-u_{\alpha_j}^{p-1}\phi_j)u_{\alpha_j}^{p-1}\psi_j\,dx \\
&= \int_{\R^3}u_{\alpha_j}^{p-1}\phi_ju_{\alpha_j}^{p-1}\psi_j\,dx + \int_{\R^3}(I_{\alpha_j}*(u_{\alpha_j}^{p-1}\phi_j-u_0^{p-1}\phi_0))u_{\alpha_j}^{p-1}\psi_j\,dx  \\  
&\qquad\qquad+\int_{\R^N}(I_{\alpha_j}*(u_0^{p-1}\phi_0)-u_{\alpha_j}^{p-1}\phi_j)u_{\alpha_j}^{p-1}\psi_j\,dx \\
&=\int_{\R^3}u_0^{2p-2}\phi_0\psi_0\,dx +\int_{\R^3}(I_{\alpha_j}*(u_{\alpha_j}^{p-1}\phi_j-u_0^{p-1}\phi_0))u_{\alpha_j}^{p-1}\psi_j\,dx +o(1),
\end{aligned}
\] 
where we used H\"older inequality, Proposition \ref{est-identity} and $L^2$ compactness of both of $\{u_{\alpha_j}^{p-1}\phi_j\}$ and $\{u_{\alpha_j}^{p-1}\psi_j\}$.  
We now estimate by using Corollary \ref{est-riesz-uniform} that
\[
\begin{aligned}
&\int_{\R^N}(I_{\alpha_j}*(u_{\alpha_j}^{p-1}\phi_j-u_0^{p-1}\phi_0))u_{\alpha_j}^{p-1}\psi_j\,dx \\
&\leq \|I_{\alpha_j}*(u_{\alpha_j}^{p-1}\phi_j-u_0^{p-1}\phi_0)\|_{L^{2N/(N-2\alpha_j)}}\|u_{\alpha_j}^{p-1}\psi_j\|_{L^{2N/(N+2\alpha_j)}} \\
&\leq C\|u_{\alpha_j}^{p-1}\phi_j-u_0^{p-1}\phi_0\|_{L^2}\|u_{\alpha_j}^{p-1}\psi_j\|_{L^1}^{\frac{2\alpha_j}{N}}\|u_{\alpha_j}^{p-1}\psi_j\|_{L^2}^{1-\frac{2\alpha_j}{N}} = o(1).
\end{aligned}
\]
This proves the assertion $(i)$. The proof of $(ii)$ follows exactly the same line. 
\end{proof}


Now we are ready to prove nondegeneracy of ground states $u_\alpha$ to \eqref{main-eq} near $0$. 
\begin{prop}\label{nondeg0}
For given $p \in [2, 3)$, let $u_\alpha$ be a family of unique positive radial ground states of \eqref{main-eq}. Then for $\alpha > 0$ sufficiently close to $0$, the linearized equation of \eqref{main-eq} at $u_\alpha$, given by
\begin{equation}\label{linearized-eq}
-\Delta \phi +\phi -p(I_\alpha * (u_\alpha^{p-1}\phi))u_\alpha^{p-1}-(p-1)(I_\alpha * u_\alpha^p)u_\alpha^{p-2}\phi  = 0 \quad \text{in } \R^3,
\end{equation}
only admits solutions of the form
\[
\phi = \sum_{i=1}^3 c_i\partial_{x_i}u_\alpha, \qquad c_i \in \R
\]
in the space $L^2(\R^3)$.
\end{prop}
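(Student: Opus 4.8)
The plan is to argue by contradiction, combining the limit profile $u_\alpha \to u_0$ from Theorem \ref{profile0} with the classical linearized nondegeneracy of the ground state $u_0$ of \eqref{limit-eq0}, i.e. the fact that the kernel of $-\Delta + I - (2p-1)u_0^{2p-2}$ in $L^2(\R^3)$ is exactly $\mathrm{span}\{\partial_{x_1}u_0,\partial_{x_2}u_0,\partial_{x_3}u_0\}$. Since differentiating \eqref{main-eq} in $x_i$ shows each $\partial_{x_i}u_\alpha$ solves \eqref{linearized-eq}, it suffices to prove that for $\alpha$ near $0$ the only solution $\phi$ of \eqref{linearized-eq} that is $L^2$-orthogonal to all $\partial_{x_i}u_\alpha$ is $\phi \equiv 0$. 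Before running any compactness argument I would first upgrade regularity: rewriting \eqref{linearized-eq} as $\phi = (-\Delta+I)^{-1}[\,p(I_\alpha*(u_\alpha^{p-1}\phi))u_\alpha^{p-1} + (p-1)(I_\alpha*u_\alpha^p)u_\alpha^{p-2}\phi\,]$ and using that $u_\alpha \in L^\infty(\R^3)$ and decays at infinity (so $u_\alpha^{p-1} \in L^\infty \cap L^m$ for every finite $m$), the Riesz estimate of Proposition \ref{est-riesz} shows the bracket lies in some $L^q(\R^3)$ with $q \in [2N/(N+2),2]$; then Lemma \ref{elliptic-regularity} gives $\phi \in H^1(\R^3)$. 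Hence any $L^2$ solution is automatically in $H^1$, and we may work there.

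Suppose the conclusion fails. Then there are $\alpha_j \to 0$ and functions $\phi_j \in H^1_r(\R^3)$ solving \eqref{linearized-eq} for $\alpha = \alpha_j$, each $L^2$-orthogonal to every $\partial_{x_i}u_{\alpha_j}$ and not identically zero; normalize $\|\phi_j\|_{H^1}=1$. Passing to a subsequence, $\phi_j \rightharpoonup \phi_0$ weakly in $H^1$. Testing \eqref{linearized-eq} against a fixed $\psi \in H^1(\R^3)$ and applying Lemma \ref{conv-linearized0} (with the constant sequence $\psi_j \equiv \psi$), the two nonlocal terms converge to $p\int u_0^{2p-2}\phi_0\psi\,dx$ and $(p-1)\int u_0^{2p-2}\phi_0\psi\,dx$ respectively, while the linear part $\int_{\R^3}(\nabla\phi_j\cdot\nabla\psi + \phi_j\psi)\,dx$ passes to the limit by weak convergence; adding these shows $\phi_0$ is a weak solution of the limit linearized equation $-\Delta\phi_0 + \phi_0 = (2p-1)u_0^{2p-2}\phi_0$ in $\R^3$.

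By the nondegeneracy of $u_0$ just recalled, $\phi_0 = \sum_{i=1}^3 c_i\partial_{x_i}u_0$. To pin down the $c_i$ I would pass the orthogonality to the limit: since $u_{\alpha_j}\to u_0$ in $H^1$ by Theorem \ref{profile0}, we have $\partial_{x_i}u_{\alpha_j}\to\partial_{x_i}u_0$ strongly in $L^2$, so $0 = \int\phi_j\,\partial_{x_i}u_{\alpha_j}\,dx \to \int\phi_0\,\partial_{x_i}u_0\,dx$. As the Gram matrix $\big(\int\partial_{x_i}u_0\,\partial_{x_k}u_0\,dx\big)_{ik}$ is, by radial symmetry, the positive-definite diagonal matrix with entries $\int(\partial_{x_i}u_0)^2\,dx$, this forces $c_i = 0$ for all $i$, whence $\phi_0 \equiv 0$.

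It remains to contradict the normalization, and this is where the genuine difficulty lies: weak convergence $\phi_j \rightharpoonup 0$ does not by itself say $\|\phi_j\|_{H^1}\to 0$, and the nonlocality of the Choquard terms blocks any naive compactness. The resolution is to test \eqref{linearized-eq} against $\phi_j$ itself, giving
\[
\|\phi_j\|_{H^1}^2 = p\int_{\R^3}(I_{\alpha_j}*(u_{\alpha_j}^{p-1}\phi_j))u_{\alpha_j}^{p-1}\phi_j\,dx + (p-1)\int_{\R^3}(I_{\alpha_j}*u_{\alpha_j}^p)u_{\alpha_j}^{p-2}\phi_j^2\,dx,
\]
and then to apply Lemma \ref{conv-linearized0} with $\psi_j = \phi_j$ and $\psi_0 = \phi_0 = 0$: both integrals converge to $\int u_0^{2p-2}\phi_0^2\,dx = 0$. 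Hence $\|\phi_j\|_{H^1}\to 0$, contradicting $\|\phi_j\|_{H^1}=1$ and completing the proof. Thus the \emph{main obstacle}, the strong $H^1$ convergence of the normalized kernel elements, is dispatched precisely by the nonlocal convergence Lemma \ref{conv-linearized0}, which is the technical heart of the argument; the only other delicate point is the initial elliptic bootstrap promoting $L^2$ solutions to $H^1$.
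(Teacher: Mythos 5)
Your argument is correct and follows essentially the same route as the paper: the same elliptic bootstrap promoting $L^2$ solutions to $H^1$, the same contradiction/normalization scheme, and the same use of Lemma \ref{conv-linearized0} both to identify the limit equation for $\phi_0$ and to pass to the limit in $\|\phi_j\|_{H^1}^2 = p\int(I_{\alpha_j}*(u_{\alpha_j}^{p-1}\phi_j))u_{\alpha_j}^{p-1}\phi_j + (p-1)\int(I_{\alpha_j}*u_{\alpha_j}^p)u_{\alpha_j}^{p-2}\phi_j^2$ --- the only (immaterial) difference being that you first force $\phi_0=0$ via nondegeneracy plus the limiting orthogonality and then contradict the normalization, while the paper first shows $\phi_0\neq 0$ from the normalization identity and then contradicts nondegeneracy via the orthogonality. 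One slip of the pen: you write $\phi_j\in H^1_r(\R^3)$, but the putative kernel elements are not radial (and must not be assumed so, since the $\partial_{x_i}u_{\alpha_j}$ are not); fortunately nothing in your argument, nor in Lemma \ref{conv-linearized0}, uses radiality of $\phi_j$, so this is harmless.
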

\begin{proof}
Differentiating \eqref{main-eq} with respect to $x_i$, we see that $\partial_{x_i}u_\alpha \in L^2(\R^3)$ solves \eqref{linearized-eq} for all $i =1,\dots,N$.
Define a finite dimensional vector space
\[
V_\alpha := \left\{\sum_{i=1}^3 c_i\partial_{x_i}u_\alpha ~ \bigg|~ c_i \in \R \right\}.
\]
Arguing indirectly, suppose the there exists a sequence $\{\alpha_j\}$ converging to $0$ 
such that for each $j$, there exists a nontrivial solution $\phi_j \in L^2$ of \eqref{linearized-eq}, not belonging to $V_{\alpha_j}$.
We may assume that $\phi_j$ is $L^2$ orthogonal to $V_{\alpha_j}$. 
We claim that any $L^2$ solution $\phi$ of \eqref{linearized-eq} automatically belongs to $H^1(\R^3)$. 
Let us define 
\[
L[\phi] := p(I_\alpha * (u_\alpha^{p-1}\phi))u_\alpha^{p-1}+(p-1)(I_\alpha * u_\alpha^p)u_\alpha^{p-2}\phi.
\]
By elliptic regularity theory, it is enough to show that $L[\phi] \in H^{-1}$. 
It is proved in \cite{MV2} that $u_\alpha \in L^\infty$ so $u_\alpha \in L^q$ for any $2 \leq q \leq \infty$ by interpolation.
Then Proposition \ref{est-riesz} and Proposition \ref{est-identity} imply that for any $\psi \in H^1$,
\[
\begin{aligned}
&|L[\phi]\psi| \leq  \left|\int_{\R^3}p(I_\alpha * (u_\alpha^{p-1}\phi))u_\alpha^{p-1}\psi\,dx\right|  +\left|\int_{\R^3}(p-1)(I_\alpha * u_\alpha^p)u_\alpha^{p-2}\phi\psi\,dx\right| \\
&\leq p\left|\int_{\R^3}(I_\alpha * (u_\alpha^{p-1}\psi))u_\alpha^{p-1}\phi\,dx\right|  +(p-1)\left|\int_{\R^3}(I_\alpha * u_\alpha^p)u_\alpha^{p-2}\phi\psi\,dx\right| \\
&\leq p\|I_\alpha * (u_\alpha^{p-1}\psi)\|_{L^2}\|u_\alpha\|_{L^\infty}^{p-1}\|\phi\|_{L^2}+(p-1)\|I_\alpha * u_\alpha^p\|_{L^3}\|u_\alpha\|_{L^\infty}^{p-2}\|\phi\|_{L^2}\|\psi\|_{L^6} \\
&\leq C(\|u_\alpha\|_{H^1}^{p-1}\|u_\alpha\|_{L^\infty}^{p-1}\|\phi\|_{L^2} + \|u_\alpha\|_{L^{3p/(1+\alpha)}}^p\|u_\alpha\|_{L^\infty}^{p-2}\|\phi\|_{L^2} )\|\psi\|_{H^1},
\end{aligned}
\]
which shows $L[\phi] \in H^{-1}$.
We normalize $\phi_j$ as $\|\phi_j\|_{H^1} = 1$. As $j \to \infty$, 
one is possible to deduce from Lemma \ref{conv-linearized0} that $\phi_j$ weakly converges in $H^1$ to some $\phi_0 \in H^1$ which satisfies
\[
-\Delta \phi_0 +\phi_0 -(2p-1)u_0^{2p-2}\phi_0  = 0,
\] 
where $u_0$ is a unique positive radial solution of \eqref{limit-eqN}. Repeatedly applying Lemma \ref{conv-linearized0}, one also has
\[
\begin{aligned}
1 &= \|\phi_j\|_{H^1}^2 \\
&= p\int_{\R^3}I_{\alpha_j}*(u_{\alpha_j}^{p-1}\phi_j)u_{\alpha_j}^{p-1}\phi_j\,dx +(p-1)\int_{\R^3}(I_{\alpha_j} * u_{\alpha_j}^p)u_{\alpha_j}^{p-2}\phi_j^2 \,dx \\
&=(2p-1) \int_{\R^3}u_0^{2p-2}\phi_0^2\,dx +o(1) \quad \text{as } j \to \infty.	
\end{aligned}
\]
This shows $\phi_0$ is nontrivial. 
Finally we note that for all $i = 1,2,3$,
\[
0 = \int_{\R^3}\partial_{x_i}u_{\alpha_j}\phi_j\,dx \to \int_{\R^3}\partial_{x_i}u_0\phi_0\,dx \quad \text{as } j \to \infty.
\]
This means that $\phi_0$ is not a linear combination of $\{\partial_{x_i} u_0 ~|~ i =1,2,3\}$.
This contradicts with the linearized nondegeneracy of $u_0$ and completes the proof.
\end{proof}

\subsection{Nondegenracy for $\alpha$ near $N$}
Arguing as in the proof of Proposition \ref{nondeg0}, we also obtain the nondegeneracy result of $\alpha$ near $N$. 
We need lemmas analogous to Lemma \ref{conv-linearized0}.

\begin{lemma}\label{conv-linearizedN}
For given $p \in (2, 2N/(N-2))$, let $v_\alpha$ be a family of unique positive radial ground states of \eqref{rescaled-eq} and $v_0$ be the positive radial ground state to \eqref{limit-eqN}.
Then, for any $\{\alpha_j\} \to 0$ and $\{\psi_j\}, \{\phi_j\} \subset H^1$ weakly $H^1$ converging to $\phi_0$ and $\psi_0$, there holds the following: 
\begin{enumerate}[$(i)$]
\item
$\displaystyle\int_{\R^N}\left(\frac{1}{|\cdot|^{N-\alpha_j}}*(v_{\alpha_j}^{p-1}\phi_j)\right)v_{\alpha_j}^{p-1}\psi_j\,dx \to \left(\int_{\R^N}v_0^{p-1}\phi_0\,dx\right)\left(\int_{\R^N}v_0^{p-1}\psi_0\,dx\right)$;
 
\item
$\displaystyle\int_{\R^N}\left(\frac{1}{|\cdot|^{N-\alpha_j}}*u_{\alpha_j}^p\right)u_{\alpha_j}^{p-2}\phi_j\psi_j\,dx \to \left(\int_{\R^N}u_0^p\,dx\right)\left(\int_{\R^N}u_0^{p-2}\phi_0\psi_0\,dx\right).$
\end{enumerate}
\end{lemma}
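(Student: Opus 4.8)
The plan is to treat both statements the same way. As $\alpha_j\to N$, convolution against $1/|\cdot|^{N-\alpha_j}$ degenerates into integration against the constant $1$, so the ``outer'' convolved factor should converge to a constant while the ``inner'' remaining factor converges strongly in $L^1$. Throughout I will use two inputs: the strong $H^1$ convergence $v_{\alpha_j}\to v_0$ established in the preceding proposition of this subsection, and the uniform exponential decay of $v_0$ and of $v_{\alpha_j}$ recorded in the Remark, which turns multiplication by $v_0^{p-1}$ (resp. $v_0^{p-2}$) into a \emph{compact} operator. This last point is what lets me dispense with any radial symmetry of $\phi_j,\psi_j$, which are only assumed to converge weakly.

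First I would prove the auxiliary strong convergence
\[
v_{\alpha_j}^{p-1}\phi_j \longrightarrow v_0^{p-1}\phi_0 \quad\text{in } L^{1}(\R^N)\cap L^{2^*/p}(\R^N),
\]
where $2^*=2N/(N-2)$, and likewise with $\phi$ replaced by $\psi$. Splitting
\[
v_{\alpha_j}^{p-1}\phi_j - v_0^{p-1}\phi_0 = (v_{\alpha_j}^{p-1}-v_0^{p-1})\phi_j + v_0^{p-1}(\phi_j-\phi_0),
\]
the first term is handled by the elementary bound $|v_{\alpha_j}^{p-1}-v_0^{p-1}| \le C|v_{\alpha_j}-v_0|(v_{\alpha_j}^{p-2}+v_0^{p-2})$ (valid since $p>2$), H\"older's inequality, and the strong convergence $v_{\alpha_j}\to v_0$ in every $L^q$, $2\le q\le 2^*$. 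For the second term I use that $v_0^{p-1}$ is bounded and decays, hence lies in every $L^m$: splitting $\R^N=B_R\cup B_R^c$, Rellich compactness gives $\phi_j\to\phi_0$ strongly on $B_R$ while the tail $\|v_0^{p-1}\|_{L^m(B_R^c)}$ is made small by taking $R$ large. This is the compact-multiplier mechanism.

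With this in hand, for $(i)$ I set $W_j:=\tfrac{1}{|\cdot|^{N-\alpha_j}}*(v_{\alpha_j}^{p-1}\phi_j)$ and decompose
\[
W_j = \frac{1}{|\cdot|^{N-\alpha_j}}*\big(v_{\alpha_j}^{p-1}\phi_j-v_0^{p-1}\phi_0\big) + \frac{1}{|\cdot|^{N-\alpha_j}}*\big(v_0^{p-1}\phi_0\big).
\]
The first summand tends to $0$ in $L^\infty(\R^N)$ by the uniform estimate in Proposition \ref{est-identity}$(ii)$ — precisely by the $\|\,\cdot\,\|_{L^{2^*/p}}+\|\,\cdot\,\|_{L^1}$ bound extracted from its proof — applied to the difference, which vanishes by the previous paragraph. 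The second summand, being the convolution of the \emph{fixed} function $v_0^{p-1}\phi_0=|v_0|^{p-2}v_0\phi_0$, converges to the constant $c_0:=\int_{\R^N}v_0^{p-1}\phi_0\,dx$ uniformly on compact sets and stays bounded in $L^\infty$, again by Proposition \ref{est-identity}$(ii)$. Thus $W_j\to c_0$ in $L^\infty_{loc}$ with a uniform $L^\infty$ bound. Writing $I_j=\int_{\R^N}W_j\,v_{\alpha_j}^{p-1}\psi_j\,dx$ and $g_j:=v_{\alpha_j}^{p-1}\psi_j$, I split $I_j-c_0\int g_0 = \int(W_j-c_0)g_j + c_0\int(g_j-g_0)$; the second piece vanishes by the $L^1$ convergence of $g_j$, while in the first piece the part on $B_R$ is controlled by $\|W_j-c_0\|_{L^\infty(B_R)}\|g_j\|_{L^1}$ and the part on $B_R^c$ by the uniform $L^\infty$ bound on $W_j$ times $\|g_j\|_{L^1(B_R^c)}$, small for $R$ large by the $L^1$ convergence (hence tightness) of $g_j$. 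This gives $I_j\to c_0\int v_0^{p-1}\psi_0=(\int v_0^{p-1}\phi_0)(\int v_0^{p-1}\psi_0)$, which is $(i)$. Statement $(ii)$ follows along identical lines: $\tfrac{1}{|\cdot|^{N-\alpha_j}}*v_{\alpha_j}^p\to\int v_0^p\,dx$ in $L^\infty_{loc}$ with a uniform bound, and $v_{\alpha_j}^{p-2}\phi_j\psi_j\to v_0^{p-2}\phi_0\psi_0$ in $L^1$, the product $\phi_j\psi_j$ being controlled by Rellich compactness on balls together with the decay of $v_0^{p-2}$ on the complement.

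I expect the main obstacle to be exactly the compact-multiplier step: since $\phi_j,\psi_j$ are only weakly $H^1$-convergent and are not assumed radial, neither $v_{\alpha_j}^{p-1}\phi_j$ nor the product $\phi_j\psi_j$ converges strongly by soft arguments alone. The whole scheme hinges on extracting genuine strong $L^1\cap L^{2^*/p}$ convergence from the (uniform) exponential decay of $v_0$ and $v_{\alpha_j}$, and on matching the H\"older exponents so that $2^*/p$, which may lie below $2$, is reached only after multiplying by the decaying weight.
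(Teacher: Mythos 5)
Your proposal is correct and follows essentially the same route as the paper: the same decomposition of the convolution into a part against the difference $v_{\alpha_j}^{p-1}\phi_j-v_0^{p-1}\phi_0$ (killed by the uniform $L^\infty$ bound in terms of $\|\cdot\|_{L^1\cap L^{2^*/p}}$ extracted from Proposition \ref{est-identity}$(ii)$) plus the convolution of the fixed function $v_0^{p-1}\phi_0$, together with strong $L^1\cap L^{2^*/p}$ convergence of the weighted sequences obtained from the decay of $v_0$ and Rellich compactness on balls. Your handling of the final integration (local uniform convergence plus tightness) and of the $L^1$ compactness of $v_0^{p-2}\phi_j\psi_j$ is, if anything, slightly more explicit than the paper's, but the underlying argument is the same.
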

\begin{proof}
From the same argument to Lemma \ref{conv-linearized0}, one can see that $v_{\alpha_j}^{p-1}\phi_j$ is compact in $L^1 \cap L^{2^*/p}$, where $2^* = 2N/(N-2)$. 
By following the same argument in the proof $(ii)$ of Proposition \ref{est-identity}, one is able to see
\begin{equation}\label{est-infiniteN}
\left\|\frac{1}{|\cdot|^{N-\alpha}}*f\right\|_{L^\infty} \leq C\|f\|_{L^1 \cap L^{2^*/p}},
\end{equation}
where $f \in L^1 \cap L^{2^*/p}$, $\alpha$ is near $N$ and $C$ is independent of $\alpha$ near $N$.
Using $(ii)$ of Proposition \ref{est-identity}, we then compute
\[
\begin{aligned}
&\int_{\R^N}\left(\frac{1}{|\cdot|^{N-\alpha_j}}*(v_{\alpha_j}^{p-1}\phi_j)\right)v_{\alpha_j}^{p-1}\psi_j\,dx \\
&= \underbrace{\int_{\R^N}\left(\frac{1}{|\cdot|^{N-\alpha_j}}*(v_{\alpha_j}^{p-1}\phi_j-v_0^{p-1}\phi_0)\right)v_{\alpha_j}^{p-1}\psi_j\,dx}_{(A)} \\
&\qquad\qquad\qquad+\int_{\R^N}\left(\frac{1}{|\cdot|^{N-\alpha_j}}*(v_0^{p-1}\phi_0)\right)v_{\alpha_j}^{p-1}\psi_j\,dx \\
&= (A) +\left(\int_{\R^N}v_0^{p-1}\phi_0\,dx\right)\left(\int_{\R^N}v_0^{p-1}\psi_0\,dx\right) +o(1).
\end{aligned}
\]
and the estimate \eqref{est-infiniteN} says
\[
|(A)| \leq C\|v_{\alpha_j}^{p-1}\phi_j-v_0^{p-1}\phi_0\|_{L^1 \cap L^{2^*/p}}\|v_{\alpha_j}^{p-1}\psi_j\|_{L^1} = o(1),
\]
which proves the assertion $(i)$.

To prove $(ii)$, we claim that $v_0^{p-2}\phi_j\psi_j$ is $L^1$ compact. Observe
\[
\int_{\R^N\setminus B_R} |v_0^{p-2}\phi_j\psi_j|\,dx \leq \|v_0\|_{L^p(\R^N\setminus B_R)}^{p-2}\|\phi_j\|_{L^p(\R^N)}\|\psi_j\|_{L^p(\R^N)}
\]
so that $v_0^{p-2}\phi_j\psi_j$ is tight. Also we note that $\|v_0\|_{C^1(\R^N)}$ is finite by the elliptic regularity theory 
and for every $B_R$, there exists $C_R$ such that $\|v_0^{p-3}\|_{B_R} \leq C_R$ because $v_0$ is continuous and positive everywhere. 
Then we can see that $v_0^{p-2}\phi_j\psi_j \in W^{1,1}(B_R)$ from the estimate
\[
\begin{aligned}
&\|v_0^{p-2}\phi_j\psi_j\|_{W^{1,1}(B_R)} \\
&\leq \|v_0^{p-2}\phi_j\psi_j\|_{L^1(B_R)}+ \|v_0^{p-3}\nabla v_0\phi_j\psi_j\|_{L^1(B_R)}
+\|v_0^{p-2}\nabla \phi_j \psi_j\|_{L^1(B_R)}+\|v_0^{p-2}\phi_j\nabla\psi_j\|_{L^1(B_R)} \\
& \leq\|v_0\|_{L^p}^{p-2}\|\phi_j\|_{L^p}\|\psi_j\|_{L^p}+C_R\|\nabla v_0\|_{L^\infty}\|\phi_j\|_{L^2}\|\psi_j\|_{L^2} \\
&\qquad\qquad\qquad+\|v_0\|_{L^\infty}^{p-2}\|\nabla\phi_j\|_{L^2}\|\psi_j\|_{L^2}+\|v_0\|_{L^\infty}^{p-2}\|\phi_j\|_{L^2}\|\nabla\psi_j\|_{L^2} \\
&< \infty
\end{aligned}
\]
so that $v_0^{p-2}\phi_j\psi_j$ is locally $L^1$ compact by the compact Sobolev embedding. 
By combining  this with tightness, we conclude that $v_0^{p-2}\phi_j\psi_j$ is $L^1$ compact 
and consequently, $v_{\alpha_j}^{p-2}\phi_j\psi_j$ is $L^1$ compact. 
Now, the remaining part of the proof follows the same line with the previous one. 
\end{proof}


\begin{prop}\label{nondegN}
Let $p \in (2, 2N/(N-2))$ and $v_\alpha$ be a family of unique positive radial ground states of \eqref{rescaled-eq}. Then for $\alpha < N$ sufficiently close to $N$, the linearized equation of \eqref{rescaled-eq} at $v_\alpha$, given by
\begin{equation}\label{linearized-rescaled-eq}
-\Delta \phi +\phi -p(1/|\cdot|^{N-\alpha} * (v_\alpha^{p-1}\phi))v_\alpha^{p-1}-(p-1)(1/|\cdot|^{N-\alpha} * v_\alpha^p)v_{\alpha}^{p-2}\phi  = 0 \quad \text{in } \R^N, 
\end{equation}
only admits solutions of the form
\[
\phi = \sum_{i=1}^N c_i\partial_{x_i}v_\alpha, \qquad c_i \in \R
\]
in the space $L^2(\R^N)$.
\end{prop}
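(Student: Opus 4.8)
The plan is to repeat the contradiction argument of Proposition \ref{nondeg0} line by line, substituting the $\alpha \to N$ ingredients for their $\alpha \to 0$ analogues. First I would differentiate \eqref{rescaled-eq} in each variable $x_i$ to verify that $\partial_{x_i}v_\alpha \in L^2(\R^N)$ solves \eqref{linearized-rescaled-eq}, so that the finite-dimensional space $V_\alpha := \{\sum_{i=1}^N c_i\partial_{x_i}v_\alpha : c_i \in \R\}$ is contained in the kernel. Arguing indirectly, I would assume a sequence $\alpha_j \to N$ together with nontrivial $L^2$ solutions $\phi_j$ of \eqref{linearized-rescaled-eq} with $\phi_j \notin V_{\alpha_j}$; subtracting its $V_{\alpha_j}$-component, I may take $\phi_j$ to be $L^2$-orthogonal to every $\partial_{x_i}v_{\alpha_j}$.

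The first technical step is the regularity claim that every $L^2$ solution of \eqref{linearized-rescaled-eq} in fact lies in $H^1(\R^N)$. Writing the nonlocal part as
\[
L[\phi] := p\Bigl(\tfrac{1}{|\cdot|^{N-\alpha}} * (v_\alpha^{p-1}\phi)\Bigr)v_\alpha^{p-1} + (p-1)\Bigl(\tfrac{1}{|\cdot|^{N-\alpha}} * v_\alpha^p\Bigr)v_\alpha^{p-2}\phi,
\]
I would show $L[\phi] \in H^{-1}(\R^N)$ and then invoke elliptic regularity. The estimate is furnished by part $(ii)$ of Proposition \ref{est-identity}: symmetrizing the convolution in the first term and using $\|\tfrac{1}{|\cdot|^{N-\alpha}} * (v_\alpha^{p-1}\psi)\|_{L^\infty} \leq C\|v_\alpha\|_{H^1}^{p-1}\|\psi\|_{H^1}$ and $\|\tfrac{1}{|\cdot|^{N-\alpha}} * v_\alpha^p\|_{L^\infty} \leq C\|v_\alpha\|_{H^1}^p$, together with $v_\alpha \in L^\infty(\R^N)$ (which holds since $v_\alpha = s(N,\alpha,p)u_\alpha$ and $u_\alpha \in L^\infty$ by \cite{MV2}), one controls $|L[\phi]\psi| \leq C\|\phi\|_{L^2}\|\psi\|_{H^1}$ for every $\psi \in H^1(\R^N)$, with $C$ independent of $\alpha$ near $N$.

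Next I would normalize $\|\phi_j\|_{H^1} = 1$ and extract a weak $H^1$ limit $\phi_0$. Lemma \ref{conv-linearizedN} identifies the weak limits of the two nonlocal terms and shows that $\phi_0$ solves the limit linearized equation \eqref{linearized-limit-eqN} at $v_0$. Applying the same lemma with $\psi_j = \phi_j$ to the identity obtained by testing \eqref{linearized-rescaled-eq} against $\phi_j$, I would pass to the limit in
\[
1 = p\!\int_{\R^N}\!\!\Bigl(\tfrac{1}{|\cdot|^{N-\alpha_j}}*(v_{\alpha_j}^{p-1}\phi_j)\Bigr)v_{\alpha_j}^{p-1}\phi_j\,dx + (p-1)\!\int_{\R^N}\!\!\Bigl(\tfrac{1}{|\cdot|^{N-\alpha_j}}*v_{\alpha_j}^p\Bigr)v_{\alpha_j}^{p-2}\phi_j^2\,dx
\]
to obtain $1 = p\bigl(\int_{\R^N}v_0^{p-1}\phi_0\,dx\bigr)^2 + (p-1)\bigl(\int_{\R^N}v_0^p\,dx\bigr)\bigl(\int_{\R^N}v_0^{p-2}\phi_0^2\,dx\bigr)$, which forces $\phi_0 \not\equiv 0$. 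Finally, the orthogonality relations $\int_{\R^N}\partial_{x_i}v_{\alpha_j}\phi_j\,dx = 0$ pass to the limit to give $\int_{\R^N}\partial_{x_i}v_0\phi_0\,dx = 0$ for all $i$, so $\phi_0$ is a nontrivial $L^2$ solution of \eqref{linearized-limit-eqN} that is not a linear combination of the $\partial_{x_i}v_0$. This contradicts part $(ii)$ of Theorem \ref{prop-limit-eqN}, completing the argument.

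The step I expect to be the main obstacle is the passage to the limit in Lemma \ref{conv-linearizedN}, on which both the identification of $\phi_0$ as a solution and its nontriviality rest. Unlike the $\alpha \to 0$ case, where Corollary \ref{est-riesz-uniform} supplies a clean $L^r \to L^s$ estimate, here the kernel $1/|\cdot|^{N-\alpha}$ spreads its mass as $\alpha \to N$, so one must exploit the uniform $L^\infty$ bound \eqref{est-infiniteN} and the $L^1 \cap L^{2^*/p}$ compactness of $v_{\alpha_j}^{p-1}\phi_j$ and $v_0^{p-2}\phi_j\psi_j$ established within Lemma \ref{conv-linearizedN}. Converting the normalization $\|\phi_j\|_{H^1}=1$ into the strictly positive limit above --- the analogue of the identity $(2p-1)\int u_0^{2p-2}\phi_0^2\,dx = 1$ in Proposition \ref{nondeg0} --- is the crux, since it is precisely what rules out the degenerate possibility $\phi_0 \equiv 0$.
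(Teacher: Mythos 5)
Your proposal is correct and follows essentially the same route as the paper: the paper's own proof simply states that the argument of Proposition \ref{nondeg0} carries over verbatim and only writes out the $H^{-1}$ bound on $L[\phi]$, which you establish in the same way via part $(ii)$ of Proposition \ref{est-identity} and the $L^\infty$ bound on $v_\alpha$. The remaining steps you detail (normalization, passage to the limit via Lemma \ref{conv-linearizedN}, nontriviality of $\phi_0$, and the contradiction with Theorem \ref{prop-limit-eqN}$(ii)$) are exactly the paper's intended argument.
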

\begin{proof}
The proof of Proposition \ref{nondegN} follows the same line with the proof of Proposition \ref{nondeg0}. 
The one we only need to show is that $L[\phi] \in H^{-1}$ when $\phi$ is a $L^2$ solution to \eqref{linearized-rescaled-eq} and
\[
L[\phi] := p(1/|\cdot|^{N-\alpha} * (v_\alpha^{p-1}\phi))v_\alpha^{p-1}+(p-1)(1/|\cdot|^{N-\alpha} * v_\alpha^p)v_{\alpha}^{p-2}\phi.
\]
For $\psi \in H^1(\R^N)$, we compute from Proposition \ref{est-identity} that
\[
\begin{aligned}
&L[\phi]\psi = \int_{\R^N}p(1/|\cdot|^{N-\alpha} * (v_\alpha^{p-1}\psi))v_\alpha^{p-1}\phi+(p-1)(1/|\cdot|^{N-\alpha} * v_\alpha^p)v_{\alpha}^{p-2}\phi\psi \,dx \\
&\leq pC\|v_\alpha\|_{H^1}^{p-1}\|\psi\|_{H^1}\|v_\alpha\|_{L^{2(p-1)}}^{p-1}\|\phi\|_{L^2}
+(p-1)C\|v_\alpha\|_{H^1}^p\|v_\alpha\|_{L^\infty}^{p-2}\|\phi\|_{L^2}\|\psi\|_{L^2}
\end{aligned}
\]
from which we deduce $L[\phi] \in H^{-1}$.
\end{proof}

Now, we shall end the proof of Theorem \ref{thm1}.
For $2< p < 2N/(N-2)$ and $\alpha < N$ close to $N$, let $\{u_\alpha\}$ be a family of unique positive radial ground states of \eqref{main-eq}
and $\phi_\alpha \in L^2(\R^N)$ be a solution of the linearized equation \eqref{linearized-eq}. 
Then $\phi_\alpha$ is a solution of \eqref{linearized-rescaled-eq} with $v_\alpha = s(N,\alpha,p)u_\alpha$. 
Then Proposition \ref{nondegN} says that $\phi_\alpha$ is a linear combination of $\partial_{x_i}v_\alpha$'s, which is also a linear combination of $\partial_{x_i}u_\alpha$'s.
This completes the proof of Theorem \ref{thm1}.

\bigskip

{\bf Acknowledgements.}
This work was supported by Kyonggi University Research Grant 2016.

\end{document}